\newcounter{alph}
\newtheorem{theo}[alph]{Theorem}
\newtheorem{coro}[alph]{Corollary}
\numberwithin{equation}{section}
\newtheorem{cor}[equation]{Corollary}
\newtheorem{lem}[equation]{Lemma}
\newtheorem{prop}[equation]{Proposition}
\newtheorem{thm}[equation]{Theorem}
\theoremstyle{definition}
\newtheorem{remo}[alph]{Remark}
\DeclareMathOperator{\ess}{ess}
\DeclareMathOperator{\Mat}{Mat}
\DeclareMathOperator{\Ric}{Ric}
\DeclareMathOperator{\Hess}{Hess}
\DeclareMathOperator{\Hom}{Hom}
\def\C{\mathbb C}
\def\N{\mathbb N}
\def\R{\mathbb R}
\def\ve{\varepsilon}
\def\vf{\varphi}
\definecolor{blue(ncs)}{rgb}{0.0, 0.53, 0.74}
\definecolor{cyan}{rgb}{0.0, 0.72, 0.92}
\definecolor{green}{rgb}{0.0, 0.5, 0.35}
\begin{document}

\title[Spectral stability of coverings]{On the spectral stability of finite coverings}
\author{Werner Ballmann}
\address
{WB: Max Planck Institute for Mathematics, Vivatsgasse 7, 53111 Bonn}
\email{hwbllmnn\@@mpim-bonn.mpg.de}
\author{Sugata Mondal}
\address{SM: Department of Mathematics and Statistics, University of Reading, UK}
\email{s.mondal@reading.ac.uk}
\author{Panagiotis Polymerakis}
\address{PP: Department of Mathematics, University of Thessaly, 3rd km Old National Road Lamia–Athens, 35100, Lamia, Greece}
\email{ppolymerakis\@@uth.gr}

\date{August 7, 2025}

\subjclass[2010]{58J50, 53C20, 35P99}
\keywords{Laplace operator, spectral stability, spectral gap, random coverings}

\thanks{\emph{Acknowledgments.}
We are grateful to the Max Planck Institute for Mathematics
and the Hausdorff Center for Mathematics in Bonn for their support and hospitality. S. M. was partially supported by EPSRC grant APP16691.
We thank Will Hide for drawing our attention to the recent articles \cite{HideMoyNaud25} and \cite{MageePudervanHandel25}.}

\maketitle

\begin{abstract}
We prove the non-existence of new eigenvalues in $[0,\Lambda]$ for specific and random finite coverings of a complete and connected Riemannian manifold $M$ with Ricci curvature bounded from below, where $\Lambda$ is any positive number below the essential spectrum of $M$ and the spectrum of the universal cover of $M$, provided the representation theory of the fundamental group of $M$ satisfies certain conditions.
\end{abstract}

\setcounter{tocdepth}{1}
\tableofcontents

\section{Introduction}
The spectrum $\sigma$ of the Laplacian $\Delta$ of a complete and connected Riemannian manifold $M$ decomposes into a discrete part $\sigma_d$ and an essential part $\sigma_{\ess}$, the first one consisting of those $\lambda\in\sigma$ such that $\Delta-\lambda$ is a Fredholm operator.
We denote the bottom of $\sigma$ by $\lambda_0=\lambda_0(M)$ and that of $\sigma_{\ess}$ by $\lambda_{\ess}=\lambda_{\ess}(M)$.
Both, $\lambda_0$ and $\lambda_{\ess}$, do not change under finite Riemannian coverings.
By definition, the spectrum of $M$ below $\lambda_{\ess}$ consists of isolated eigenvalues of finite multiplicity. 

We are concerned with the relation of spectra below $\lambda_{\ess}$ under finite Riemannian coverings.
To that end, let $p\colon M'\to M$ be a finite Riemannian covering of complete and connected Riemannian manifolds.
Since Laplacians are compatible with pull-backs, $\sigma(M)\subseteq\sigma(M')$.
We aim to identify intervals $I\subseteq[0,\lambda_{\ess}(M))$ such that, including multiplicities, 
\[
I \cap \sigma(M) = I \cap \sigma(M').
\]
If this holds, then we say that $p$ respectively $M^\prime$ is \emph{$I$-stable}.
In general, one cannot expect a finite covering to be $I$-stable if $I$ is large.
In fact, \cite{BM} contains a number of results on \emph{$I$-instability}.
 
Over the last few years, Magee and coauthors developed a general approach that they used to show the existence of intervals $I$ and finite coverings $p\colon M'\to M$ that are $I$-stable when $M$ is a surface of a certain type; see below.  
In fact, in these situations, as $n$ - the number of sheets of the covering $p\colon M'\to M$ - tends to $\infty$, the proportion of $I$-stable $n$-sheeted coverings, among all $n$-sheeted coverings, approaches one.
The authors call this phenomenon \emph{uniform spectral gap for random finite coverings}.
Following \cite{BM}, we refer to this phenomenon as asymptotically almost sure $I$-stability of finite coverings.

While the very first uniform spectral gap result is due to Brooks-Makover \cite{BrooksMakover04}, prior to the work \cite{MageeNaud20}, examples of $I$-stable finite coverings, for any interval $I$ were rare. 
A result of Brooks \cite[Theorem 3]{Brooks86} shows the existence of such an interval for \emph{any} finite covering provided $\pi_1(M, x)$ has \emph{Property T}.  
Recall that fundamental groups of surfaces (this includes free groups) do not have Property T. 
In fact, a result of Randol \cite{Ra74} is quite opposite: for any \emph{closed} (i.e., \emph{compact and connected with empty boundary}) hyperbolic surface $S$ of genus $g\ge2$, any natural number $\ell$, and any $\ve>0$,
there is a finite Riemannian covering $p\colon S'\to S$ such that $S'$ has at least $\ell$ eigenvalues in $[0,\ve)$.
Compare also with \cite{BM}.

To state the above mentioned results more precisely, let $S$ be a complete hyperbolic surface of finite type and without boundary.
Then the universal cover of $S$ is the hyperbolic plane $H$ with $\lambda_0(H)=1/4$.
For a covering $p\colon S'\to S$, we denote the number of sheets of this covering by $|p|$.

1) In \cite{MageeNaud20} (see also \cite{MageeNaud21}) it is shown that for $S$ orientable, non-compact and convex cocompact with Hausdorff dimension of its limit set $\delta>1/2$ and for any $\sigma\in(3\delta/4,\delta)$, finite Riemannian coverings $p\colon S'\to S$ are asymptotically almost surely $[0,\sigma(1-\sigma)]$-stable as $|p| \to \infty$.
    
2) In \cite{MageeNaudPuder22}, the authors show that for $S$ closed and orientable, and for any $\ve>0$, finite Riemannian coverings $p\colon S'\to S$ are asymptotically almost surely $[0,3/16-\ve]$-stable as $|p|\to\infty$.
This result was recently extended in \cite{MageePudervanHandel25} and \cite{HideMoyNaud25} to $[0, \lambda_0(\tilde{S})-\ve]$-stability for metrics of variable negative curvature.
    
3) In \cite{HideMagee21}, the authors show that for $S$ orientable and non-compact of finite area, and for any $\ve>0$, finite Riemannian coverings $p\colon S'\to S$ are asymptotically almost surely $[0,1/4-\ve]$-stable as $|p|\to\infty$.
This result was recently extended in \cite{Moy25} to any complete non-compact surface of finite type with pinched negative curvature $-b^2 \le K \le -1$.

The setup of our main result in the introduction is a complete and connected Riemannian manifold $M$ with fundamental group $\Gamma$ and universal covering space $\tilde M$, endowed with the lifted Riemannian metric.
We also assume that
\begin{align}\label{lambda}
    \mu = \min\{\lambda_{\ess}(M),\lambda_0(\tilde M)\}>0.
\end{align}
Two cases of interest are the following.
1) If $M$ is closed, then $\sigma_{\ess}(M)=\emptyset$ so that $\lambda_{\ess}(M)=\infty$.
Furthermore, by a celebrated result of Brooks \cite[Theorem 1]{Brooks81}, $\lambda_0(\tilde M)>0$ if and only if $\Gamma$ is non-amenable.
2) If $M$ is geometrically finite (in the sense of Bowditch \cite{Bo}), then $\lambda_{\ess}(M)\ge\lambda_0(\tilde M)>0$; see \cite[Theorem B]{BP24}.
In \cref{subcov}, we discuss \eqref{lambda} in more detail.

To state our main result, we need a property from random matrix theory.
Let $\rho_\infty$ be the regular representation of $\Gamma$ on $\ell^2(\Gamma)$ and $\rho_n^0$ the irreducible representation of the symmetric group $S_n$ on $V_n^0=\{x\in\C^n\mid \sum x_i=0\}$.
We call elements $\vf\in\Hom(\Gamma,S_n)$ and also their compositions $\rho_n^0\circ\vf$ \emph{permutation representations} of $\Gamma$.
For $\vf\in\Hom(\Gamma,S_n)$, we consider the following inequality:
\begin{align}\label{almost}
    \bigg\|\sum_{\gamma\in\Gamma} a(\gamma)\otimes(\rho_n^0\circ\vf)(\gamma)\bigg\|_{\C^r\otimes V_n^0} \le \bigg\|\sum_{\gamma\in\Gamma} a(\gamma)\otimes\rho_\infty(\gamma) \bigg\|_{\C^r\otimes\ell^2(\Gamma)} + \ve,
\end{align}
where $a$ is a finitely supported map from $\Gamma$ to $\Mat_{r\times r}(\C)$, for some $r\ge1$, and $\ve>0$.
The norms are to be understood as operator norms. 
The inequality \eqref{almost} enters our discussion, since, following \cite{HideMagee21}, we parametrize (possibly non-connected) finite covers of $M$ of degree $n$ by elements $\vf\in\Hom(\Gamma,S_n)$.

\begin{theo}\label{maintheo}
	Assume that the Ricci curvature of $M$ is bounded from below and \eqref{lambda} holds.
	Fix $0<\Lambda<\mu$ and $0<\ve<1$. Then there are integers $r_i\ge1$ and finitely supported maps $a_i\colon\Gamma\to\Mat_{r_i\times r_i}(\C)$, $1\le i\le k$ (for some--large--$k\in\N$), such that if $\vf\in\Hom(\Gamma,S_n)$ satisfies \eqref{almost} with input $r=r_i$, $a=a_i$, and the given $\ve$, for each $1\le i\le k$, then $M_\vf$ is $[0,\Lambda]$-stable.
\end{theo}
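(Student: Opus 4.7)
The plan is to reduce $[0,\Lambda]$-stability of $M_\vf$ to a spectral gap statement for a twisted Laplacian, and then to use the quantitative near-weak-containment provided by \eqref{almost} to transfer the gap $\Lambda<\mu\le\lambda_0(\tilde M)$ from the universal cover to that twisted operator. Writing $\rho=\rho_n^0\circ\vf$ and letting $E_\vf\to M$ be the flat Hermitian bundle with fiber $V_n^0$ associated to $\rho$, the decomposition $\C^n=\C\oplus V_n^0$ of the $S_n$-representation yields an orthogonal, Laplacian-compatible splitting $p_*L^2(M_\vf)=L^2(M)\oplus L^2(M,E_\vf)$. Hence $M_\vf$ is $[0,\Lambda]$-stable if and only if $\sigma(\Delta_{E_\vf})\cap[0,\Lambda]=\emptyset$, and using the lower Ricci bound through a twisted version of standard Donnelly--Li type comparisons one checks that $\lambda_{\ess}(\Delta_{E_\vf})\ge\lambda_{\ess}(M)\ge\mu>\Lambda$. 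The task thus reduces to proving $\lambda_0(\Delta_{E_\vf})>\Lambda$.

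Next I would set up convolution-type test operators. Identifying $L^2(M,E_\vf)$ with $\rho$-equivariant $V_n^0$-valued $L^2$ functions on $\tilde M$ and fixing a relatively compact fundamental domain together with a finite orthonormal system $(e_1,\dots,e_r)\subset L^2(F)$ used to truncate test sections, the relevant spectral objects on $E_\vf$---the resolvent $(\Delta_{E_\vf}-z)^{-1}$, the heat operator $e^{-t\Delta_{E_\vf}}$, or a smooth spectral cut-off---when compressed to $\mathrm{span}\,e_i$ and unfolded via the $\Gamma$-action, become operators on $\C^r\otimes V_n^0$ of precisely the form $T=\sum_{\gamma\in\Gamma}a(\gamma)\otimes\rho(\gamma)$, with $a$ supported, after truncation, on a finite subset of $\Gamma$. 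The aim is to choose finitely many data $(r_i,a_i)$ depending only on $\Lambda$, $\ve$ and the geometry of $M$, so that any putative eigensection of $\Delta_{E_\vf}$ with eigenvalue $\le\Lambda$ forces some $\|T_i\|_{\C^{r_i}\otimes V_n^0}$ to exceed a fixed threshold.

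I would then compare with the regular representation. The ``universal'' operators $R_i=\sum_\gamma a_i(\gamma)\otimes\rho_\infty(\gamma)$ on $\C^{r_i}\otimes\ell^2(\Gamma)$ are, via the identification of $\rho_\infty$ with the $\Gamma$-action on $L^2(\tilde M)$, unitarily equivalent to the same convolution operators built on the universal cover; their norms are therefore controlled by a spectral quantity on $\tilde M$ bounded in terms of $\lambda_0(\tilde M)\ge\mu>\Lambda$, with a strict margin. Choosing the $(r_i,a_i)$ together with this margin, and requiring the $\ve$ in \eqref{almost} to be strictly smaller, one obtains $\|T_i\|\le\|R_i\|+\ve$ below the threshold of the previous step, contradicting the existence of low spectrum of $\Delta_{E_\vf}$. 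This yields $\lambda_0(\Delta_{E_\vf})>\Lambda$ and, combined with the essential-spectrum bound, finishes the proof.

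The main obstacle is the constructive step of turning the soft statement ``weak containment of $\rho$ in $\rho_\infty$ implies $\lambda_0(\Delta_{E_\vf})\ge\lambda_0(\tilde M)$'' into a genuinely finite one: an eigensection for $\Delta_{E_\vf}$ with eigenvalue $\le\Lambda$ is a global $L^2$ object, whereas \eqref{almost} allows only finitely supported $a_i$. One must therefore approximate a non-local spectral object (heat kernel, resolvent, or spectral projector) by a finite convolution over $\Gamma$, with error uniform in $\vf$ and $n$. Here the lower Ricci bound is essential, through Gaussian and gradient heat kernel estimates in the style of Li--Yau which allow the tails in $\gamma\in\Gamma$ to be truncated with controlled loss. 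Packaging this so that the truncation error is absorbed into the $\ve$ of \eqref{almost}, uniformly in the representation $\vf$, is the main technical burden.
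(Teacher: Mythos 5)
Your overall strategy is the right one and tracks the paper's: identify the new spectrum with a twisted Laplacian on the flat bundle associated with $\rho_n^0\circ\vf$ (equivalently, $\Delta$ on $L^2_{\textrm{new}}(M_\vf)$), present a spectral object as an operator of the form $\sum_\gamma a(\gamma)\otimes(\rho_n^0\circ\vf)(\gamma)$, and compare to the regular representation via \eqref{almost}. The correct reduction to $\lambda_0(\Delta_{E_\vf})>\Lambda$, the identification $L^2_{\textrm{new}}(M_\vf)\cong L^2(F)\otimes V_n^0$, and the role of Lipschitz continuity and finite-rank approximation to reach finitely supported $a_i$ are all in the paper (\cref{secauto}, \cref{secrandom}). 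But there are two genuine gaps where your outline does not yet become a proof.

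First, you treat the hypothesis $\lambda_{\ess}(M)>\Lambda$ as supplying only a soft bound $\lambda_{\ess}(\Delta_{E_\vf})>\Lambda$, after which the ``resolvent truncation'' is supposed to work on all of $M$. That cannot be right when $M$ is non-compact: the fundamental domain $F$ is then non-compact, so even after cutting off the kernel $G_\lambda(x,y)$ of the $\hat M$-resolvent to $d(x,y)<2T$, the restriction of the error kernel to $F\times\gamma F$ is supported on a non-compact set and need not be Hilbert--Schmidt, and infinitely many $\gamma$ contribute; you therefore cannot land in the finitely supported, finite-rank $a_i$ that \eqref{almost} requires. The essential-spectrum hypothesis has to be used \emph{constructively}: one fixes a compact $K$ with $\lambda_0(M\setminus K^\circ)>\Lambda$, builds an end parametrix from the Dirichlet resolvent on $M_\vf\setminus p_\vf^{-1}(K^\circ)$ (whose error $\mathbb{L}_\vf^K(\lambda)$ is uniformly small, \cref{estl2}), and multiplies the interior object by $1-\tilde\chi_K^-$ so that the interior error kernel $L_T(\lambda,\gamma x,y)(1-\tilde\chi_K^-(y))$ is supported in a compact $D\times D$ and in a finite set $S\subset\Gamma$. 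Without this second cut-off the passage to $\mathcal L_\vf^T(\lambda)=\sum_{\gamma\in S}a_\gamma^T(\lambda)\otimes\rho_\vf^0(\gamma^{-1})$ and then to a finite-rank, finitely supported $a$ is not available.

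Second, the step you flag as ``the main technical burden'' is in fact the content of the theorem, and your outline leaves it as a promissory note. The operator whose norm is compared to the regular representation is not a truncated resolvent of $\Delta_{E_\vf}$ (that operator is $\vf$-dependent, and moreover a mere truncation changes the operator uncontrollably); it is the \emph{commutator error} $\mathbb{L}_T(\lambda)=[\Delta,\chi_{T,y}]\,G_\lambda$ of a parametrix $\chi_{T,y}G_\lambda$ built on $\hat M$. Its kernel is supported away from the diagonal, so it is bounded and, after multiplication by $1-\tilde\chi_K^-$, Hilbert--Schmidt; its norm on $L^2(\hat M)$ (i.e.\ on $L^2(F)\otimes\ell^2(\Gamma)$ via the unitary $U$ of \cref{cal L estimate}) is $O(1/T)$ by the Cheng--Yau gradient estimate (\cref{CY}), which is what allows the margin you want. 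None of this is produced by the Li--Yau Gaussian/gradient heat-kernel estimates you invoke, and the uniformity in $\lambda\in[0,\Lambda]$, which lets you pass from a single $\lambda$ to the whole interval with finitely many $a_i$, requires the explicit Lipschitz bounds on $G_\lambda$ and $\nabla G_\lambda$ in $\lambda$ (\cref{Green estimate}, \cref{grad Green estimate}). Until you set up the parametrix equation $(\Delta_{M_\vf}-\lambda)\mathbb{M}_\vf(\lambda)=1+\mathbb{L}_\vf^{\text{int}}(\lambda)+\mathbb{L}_\vf^K(\lambda)$ and prove these bounds, the claimed contradiction ``low spectrum forces $\|T_i\|$ large'' has no basis.
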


As we mentioned in the beginning, $\lambda_0$ and $\lambda_{\ess}$ do not change under finite coverings, which implies the statement of \cref{maintheo} if $\Lambda<\lambda_0(M)$; in particular, if $\lambda_0(M)=\mu$.
On the other hand, if $\Lambda\ge\lambda_0(M)$, then $\lambda_0(M)$ is an eigenvalue of $M$ and of $M_\vf$ with multiplicity equal to the number of connected components of $M$ and $M_\vf$, respectively. Then $[0,\Lambda]$-stability of $M_\vf$ implies that $M_\vf$ is connected, $M$ being connected.

In our proof of \cref{maintheo}, we adapt the strategy of Hide and Magee \cite{HideMagee21} in their proof of the geometric part of their Theorem 1.1.
In particular, we construct a parametrix using an appropriate decomposition of $M$ into a compact interior and a neighborhood of infinity (in case $M$ is non-compact) and use a finite-dimensional approximation to bring in \eqref{almost}.
The decomposition is reflected by \eqref{lambda}, that is, by the two assumptions $\lambda_0(\tilde M)>0$ and $\lambda_{\ess}(M)>0$.

We consider now the following asymptotic property of $\Gamma$:
\begin{enumerate}[label=(PRP)]
    \item\label{pr}
For any given $r>0$, finitely supported map $a\colon\Gamma\to\Mat_{r\times r}(\C)$, and $\ve>0$, the inequality \eqref{almost}
holds asymptotically almost surely for $\vf\in\Hom(\Gamma,S_n)$ (as $n\to\infty$).
\end{enumerate}

\noindent
Here PR stands for permutation representation and the P on the right for probability.
\ref{pr} is fulfilled by surface groups including finitely generated free groups; see \cref{subsecaas} below.
Our main application of \cref{maintheo} is the following.

\begin{theo}\label{mainth}
Assume that the Ricci curvature of $M$ is bounded from below, \eqref{lambda} holds, and $\Gamma$ satisfies \ref{pr}.
Then, for any $0<\Lambda<\mu$, finite Riemannian coverings $p\colon M'\to M$ are asymptotically almost surely $[0,\Lambda]$-stable.
\end{theo}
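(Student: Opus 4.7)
The plan is to reduce \cref{mainth} to \cref{maintheo}: once \cref{maintheo} identifies a finite list of operator-norm inequalities whose simultaneous validity forces $[0,\Lambda]$-stability, the asymptotically almost sure statement follows by applying \ref{pr} to each of these inequalities and taking a finite union bound.

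More precisely, fix $0<\Lambda<\mu$ and choose any $0<\ve<1$ (say $\ve=1/2$). Applying \cref{maintheo} to this $\Lambda$ and $\ve$ produces an integer $k\ge 1$ and data $(r_i,a_i)_{1\le i\le k}$ with $r_i\ge 1$ and $a_i\colon\Gamma\to\Mat_{r_i\times r_i}(\C)$ finitely supported, \emph{depending only on $M$, $\Lambda$, and $\ve$}, such that any $\vf\in\Hom(\Gamma,S_n)$ satisfying \eqref{almost} with inputs $(r_i,a_i,\ve)$ for every $1\le i\le k$ gives a $[0,\Lambda]$-stable covering $M_\vf$.

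For each fixed $1\le i\le k$, the triple $(r_i,a_i,\ve)$ is independent of $n$, so hypothesis \ref{pr} applies: the event
\[
E_i(n)=\bigl\{\vf\in\Hom(\Gamma,S_n)\;:\;\vf\text{ satisfies \eqref{almost} with input }(r_i,a_i,\ve)\bigr\}
\]
has probability tending to $1$ as $n\to\infty$ under the uniform measure on $\Hom(\Gamma,S_n)$ (the parametrization of $n$-sheeted covers following Hide–Magee, as already used in \cref{maintheo}). Since $k$ is finite and fixed, a union bound gives
\[
\mathbb{P}\Bigl(\bigcap_{i=1}^{k} E_i(n)\Bigr)\;\ge\;1-\sum_{i=1}^{k}\bigl(1-\mathbb{P}(E_i(n))\bigr)\;\longrightarrow\;1\quad\text{as }n\to\infty,
\]
and on this intersection \cref{maintheo} yields $[0,\Lambda]$-stability of $M_\vf$. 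This is exactly the asymptotically almost sure $[0,\Lambda]$-stability asserted by \cref{mainth}.

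I do not anticipate a serious obstacle. The entire analytic content — the decomposition of $M$ into a compact core and a neighborhood of infinity, the parametrix construction, and the finite-dimensional approximation that produces the list $(r_i,a_i)$ — is packaged inside \cref{maintheo}, while the probabilistic content is precisely the hypothesis \ref{pr}. The only points requiring care are (i) that the $k$ conditions supplied by \cref{maintheo} really depend only on $(M,\Lambda,\ve)$ and not on $n$ (which is the content of the theorem's statement), and (ii) the compatibility of the two probability models — both being the Hide–Magee parametrization by $\Hom(\Gamma,S_n)$, this is automatic.
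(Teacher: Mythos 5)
Your proof is correct and is exactly the paper's argument: the paper itself states that \cref{mainth} is an immediate consequence of \cref{maintheo}, and the mechanism is precisely the finite union bound over the $k$ events supplied by \cref{maintheo}, each of which is asymptotically almost sure by \ref{pr}.
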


\cref{mainth} is an immediate consequence of \cref{maintheo}.
It generalizes the corresponding results mentioned further up, since the involved fundamental groups satisfy \ref{pr}.
Indeed, we have the following consequence.

\begin{coro}\label{maincor}
Let $S$ be a surface of finite type with negative Euler characteristic, orientable if closed, endowed with a complete Riemannian metric with curvature bounded from below.
Assume that $\lambda_0(\tilde S)>0$.
Then $S$ satisfies \eqref{lambda} and, for any $0<\Lambda<\lambda_0(\tilde S)$, finite Riemannian coverings $p\colon S'\to S$ are asymptotically almost surely $[0,\Lambda]$-stable.
\end{coro}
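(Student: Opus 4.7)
The plan is to deduce the corollary directly from \cref{mainth} by verifying its three hypotheses for $S$: (i) the Ricci lower bound, (ii) the spectral positivity \eqref{lambda}, and (iii) property \ref{pr} for $\Gamma=\pi_1(S)$. Hypothesis (i) is immediate since $S$ is two-dimensional, so Ricci curvature coincides with sectional curvature, and the latter is bounded from below by assumption.

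For (ii), the assumption $\lambda_0(\tilde S)>0$ already gives one of the two quantities in the definition of $\mu$; it remains to show $\lambda_{\ess}(S)\ge \lambda_0(\tilde S)$, which will also identify $\mu=\lambda_0(\tilde S)$ and make the range $0<\Lambda<\lambda_0(\tilde S)$ coincide with the range $0<\Lambda<\mu$ required by \cref{mainth}. If $S$ is closed, $\sigma_{\ess}(S)=\emptyset$ and there is nothing to check. If $S$ is non-compact of finite type, I would use a Persson-type characterization
\[
\lambda_{\ess}(S)=\sup_K \lambda_0(S\setminus K),
\]
the supremum running over compact $K\subset S$, combined with the fact that each open subset $U\subset S$ lifts to an open set $\tilde U\subset \tilde S$ whose Dirichlet bottom dominates $\lambda_0(\tilde S)$, and that Riemannian covers do not increase this bottom of spectrum. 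This comparison is exactly the content of the discussion in \cref{subcov} (and of \cite[Theorem B]{BP24} for geometrically finite models); invoking it gives $\lambda_{\ess}(S)\ge \lambda_0(\tilde S)>0$, as required.

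For (iii), I note that under the hypotheses of the corollary the fundamental group $\Gamma$ is either the surface group of a closed orientable hyperbolic-type surface of genus $\ge 2$, or, since any non-closed surface of finite type deformation retracts onto a finite graph, a finitely generated free group. Both classes of groups satisfy \ref{pr}; this is precisely what is established in \cref{subsecaas}, and I would cite it. With all three hypotheses of \cref{mainth} now in place and $\mu=\lambda_0(\tilde S)$, the theorem applies verbatim and yields asymptotic almost-sure $[0,\Lambda]$-stability for every $0<\Lambda<\lambda_0(\tilde S)$.

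The only genuinely substantial step here is (iii), which rests on the random-matrix-theoretic input for surface and free groups carried out separately in the paper; the geometric reductions (i) and (ii) are routine once the comparison $\lambda_{\ess}(S)\ge\lambda_0(\tilde S)$ is available. The potential obstacle is showing that this comparison survives under the mild hypothesis of a curvature lower bound alone, without any upper bound or pinching, for non-compact ends; this is the reason one needs to invoke \cref{subcov} rather than prove it in a single line.
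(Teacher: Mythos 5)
Your overall strategy — reduce to \cref{mainth} and verify its hypotheses — is precisely what the paper does, and steps (i) and (iii) are correct: in dimension two Ricci curvature coincides with Gauss curvature, and \cref{subsecaas} supplies \ref{pr} for finitely generated free groups (non-compact finite-type $S$) and for closed orientable surface groups.

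The gap is in step (ii). Your sketch relies on the claim that ``Riemannian covers do not increase this bottom of spectrum,'' but the monotonicity runs the other way: passing to a Riemannian cover does not \emph{decrease} $\lambda_0$. Thus for the preimage $\tilde U\subset\tilde S$ of an open $U\subset S$ one has $\lambda_0^{\mathrm{Dir}}(\tilde U)\ge\lambda_0^{\mathrm{Dir}}(U)$, which combined with the trivial $\lambda_0^{\mathrm{Dir}}(\tilde U)\ge\lambda_0(\tilde S)$ gives no bound on $\lambda_0^{\mathrm{Dir}}(U)$ and hence none on $\lambda_{\ess}(S)$. What is needed is the reverse inequality $\lambda_0^{\mathrm{Dir}}(U)\ge\lambda_0^{\mathrm{Dir}}(\tilde U)$, which holds only because the covering over each end is \emph{amenable}. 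This is exactly \cref{amena}, which the paper cites in one line: choose a compact $K\subset S$ with smooth boundary large enough that each component of $S\setminus K$ is a half-cylinder with $\pi_1\cong\Z$ (amenable), whence $\lambda_{\ess}(S)\ge\lambda_0(\tilde S)$. Your closing remark also misattributes the obstacle: \cref{amena} carries no curvature hypothesis at all — the Ricci lower bound in the main theorem enters through the Cheng--Yau and Cheeger--Colding estimates, not through this spectral comparison. Your hedge of ``invoking the discussion in \cref{subcov}'' would eventually land on \cref{amena}, but the argument you actually write out would not close.
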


Recall that $\lambda_0(\tilde S)>0$ if $S$ is closed (see \cite{Brooks81} respectively \cref{thm:Brooks} below) or, more generally, if $\lambda_0(S)<\lambda_{\ess}(S)$ (see \cite{Po} respectively \cref{thmpp}), or if the curvature of $S$ has a negative upper bound.

\begin{proof}[Proof of \cref{maincor}]
By \cref{amena}, $\lambda_{\ess}(S)\ge\lambda_0(\tilde S)$, and hence $\mu=\lambda_0(\tilde S)>0$.
If $S$ is non-compact, then the fundamental group of $S$ is a finitely generated free group and, hence, satisfies \ref{pr}.
If $S$ is orientable and closed, then the fundamental group of $S$ satisfies \ref{pr} as well.
\end{proof}

There is a second asymptotic property, which we consider now.
\begin{enumerate}[label=(PRS)]
    \item\label{prs}
There is a sequence of $\vf_i\in\Hom(\Gamma,S_{n_i})$ with $n_i\to\infty$,
such that, for any given $r>0$, finitely supported map $a\colon\Gamma\to\Mat_{r\times r}(\C)$, and $\ve>0$, there is an $i_0$, such that the inequality \eqref{almost} holds for all $\vf_i$ with $i\ge i_0$.
\end{enumerate}

As above, PR stands for repermutation presentation and the S on the right for sequence.
This property appears as P\textbf{Perm}F in \cite{Magee25}.
It holds for all fundamental groups of closed surfaces with Euler characteristic $\le-2$; see \cref{subsecaas}.

\begin{theo}\label{mainth2}
Assume that the Ricci curvature of $M$ is bounded from below, \eqref{lambda} holds, and $\Gamma$ satisfies \ref{prs}.
Then, for any $0<\Lambda<\mu$, $M$ admits a sequence of $[0,\Lambda]$-stable finite Riemannian coverings $p_i\colon M_i\to M$ of degree $n_i\to\infty$.
\end{theo}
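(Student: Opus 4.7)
The plan is to combine \cref{maintheo} with property \ref{prs} by pigeonholing over a finite list of test inputs, in exact parallel to how \cref{mainth} is deduced from \cref{maintheo} in the probabilistic setting. First I would fix the interval $[0,\Lambda]$ with $0<\Lambda<\mu$ and choose some $0<\ve<1$. Feeding these data into \cref{maintheo} produces a finite collection of integers $r_i\ge 1$ and finitely supported maps $a_i\colon\Gamma\to\Mat_{r_i\times r_i}(\C)$, for $1\le i\le k$, with the property that whenever $\vf\in\Hom(\Gamma,S_n)$ satisfies \eqref{almost} with input $(r_i,a_i,\ve)$ for \emph{every} $1\le i\le k$ simultaneously, the associated covering $M_\vf\to M$ is $[0,\Lambda]$-stable.

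Next I would apply \ref{prs} to obtain a sequence $\vf_j\in\Hom(\Gamma,S_{n_j})$ with $n_j\to\infty$ along which \eqref{almost} eventually holds for any fixed input. Concretely, for each of the finitely many triples $(r_i,a_i,\ve)$, \ref{prs} provides a threshold $j_0(i)$ such that \eqref{almost} is satisfied by $\vf_j$ whenever $j\ge j_0(i)$. Setting
\[
j_0=\max_{1\le i\le k} j_0(i),
\]
which is finite since $k$ is finite, all $k$ of the inequalities hold simultaneously for $\vf_j$ as soon as $j\ge j_0$. Invoking \cref{maintheo} for each such $\vf_j$, the covering $p_j\colon M_{\vf_j}\to M$ is $[0,\Lambda]$-stable; its degree equals $n_j\to\infty$, so extracting the tail $(\vf_j)_{j\ge j_0}$ (and relabeling) gives the required sequence.

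There is essentially no geometric obstacle in this deduction: all of the analytic work sits inside \cref{maintheo}, and the only subtlety is that \ref{prs} must be strong enough to make \eqref{almost} hold for finitely many fixed inputs along a common sequence, rather than merely for one input at a time. This is exactly the content of \ref{prs}, which is why the finiteness of $k$ in the conclusion of \cref{maintheo} is crucial here. The rest of the argument is a one-line pigeonhole, and no compactness or randomness hypothesis on $\Gamma$ beyond \ref{prs} enters.
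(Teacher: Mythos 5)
Your argument is correct and coincides with what the paper intends: the paper explicitly treats \cref{mainth} as an immediate consequence of \cref{maintheo}, and likewise \cref{mainth2} (the paper notes at the end of Section 7 that \cref{main} readily yields \cref{mainth2}). The pigeonhole step you flag — that the finiteness of $k$ lets you take the maximum of the finitely many thresholds supplied by \ref{prs} so that the tail of the sequence $\vf_j$ satisfies \eqref{almost} for all $k$ inputs simultaneously — is precisely the point being implicitly invoked, and your write-up simply makes it explicit.
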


For surfaces of finite type, we get the following consequence.

\begin{coro}\label{maincors}
Let $S$ be a surface of finite type with negative Euler characteristic, of Euler characteristic $\le-2$ if non-orientable and closed, endowed with a complete Riemannian metric with curvature bounded from below.
Assume that $\lambda_0(\tilde S)>0$.
Then $S$ satisfies \eqref{lambda} and, for any $0<\Lambda<\mu$, $S$ admits a sequence of $[0,\Lambda]$-stable finite Riemannian coverings $p_i\colon S_i\to S$ of degree $n_i\to\infty$.
\end{coro}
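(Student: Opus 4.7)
The plan is to mirror the proof of \cref{maincor}, substituting \cref{mainth2} for \cref{mainth} and verifying \ref{prs} in place of \ref{pr}.

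First I would confirm \eqref{lambda}. The hypothesis $\lambda_0(\tilde S) > 0$ together with \cref{amena}, which gives $\lambda_{\ess}(S) \ge \lambda_0(\tilde S)$, yields $\mu = \lambda_0(\tilde S) > 0$, exactly as in the proof of \cref{maincor}.

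Next I would verify that $\Gamma = \pi_1(S)$ satisfies \ref{prs}, by cases. If $S$ is closed and orientable with negative Euler characteristic, then $\chi(S) \le -2$ automatically; if $S$ is closed and non-orientable, $\chi(S) \le -2$ is assumed. In either closed case \ref{prs} is supplied by the result recalled just before \cref{mainth2} (see \cref{subsecaas}). If $S$ is non-compact of finite type, then $\Gamma$ is a finitely generated free group, which satisfies \ref{pr}. The one point in the argument that is not a direct citation is the implication \ref{pr} $\Rightarrow$ \ref{prs}: I would fix a countable dense family of test data $(r, a, \ve)$---matrix entries with Gaussian-rational coefficients on a prescribed finite support, with $\ve$ running through $\{1/k\}$---apply \ref{pr} to pick, for each $k$, some $n_k \ge k$ and some $\vf_{n_k} \in \Hom(\Gamma, S_{n_k})$ satisfying the first $k$ of these inequalities simultaneously, and invoke joint continuity of the two sides of \eqref{almost} in $a$ and monotonicity in $\ve$ to extend to arbitrary test data. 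I expect this small bookkeeping step to be the only real obstacle.

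With both hypotheses of \cref{mainth2} in place, the conclusion---a sequence of $[0,\Lambda]$-stable finite Riemannian coverings $p_i \colon S_i \to S$ of degrees $n_i \to \infty$---follows immediately. The sharpness of the Euler-characteristic restriction on closed non-orientable $S$ reflects precisely the range in which \ref{prs} is currently known for surface groups, so no further geometric input beyond that used in the proof of \cref{maincor} is required.
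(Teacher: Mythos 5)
Your verification of \eqref{lambda} and your treatment of the closed cases match the paper exactly: \cref{amena} gives $\lambda_{\ess}(S)\ge\lambda_0(\tilde S)$, hence $\mu=\lambda_0(\tilde S)>0$, and in the closed orientable and closed non-orientable (with $\chi\le-2$) cases \ref{prs} is supplied by the Louder--Magee theorem via the observation in \cref{subsecaas} that the corresponding fundamental groups are limit groups. (Your remark that negative Euler characteristic forces $\chi\le-2$ for closed orientable surfaces is correct, since $\chi=2-2g$ is even.)

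Where you diverge is the non-compact case. The paper again appeals to Louder--Magee: a finitely generated free group is a limit group, hence satisfies \ref{prs} directly, with no reference to \ref{pr}. You instead use the Bordenave--Collins result that free groups satisfy \ref{pr} and then derive \ref{prs} by a diagonalization over a countable dense set of test data $(r,a,\ve)$. This implication is correct, and your sketch contains the essential ingredients: for each $k$, the intersection of the first $k$ full-measure events is still asymptotically full, so some $\vf_{n_k}\in\Hom(\Gamma,S_{n_k})$ with $n_k\ge k$ can be chosen satisfying them all; then joint Lipschitz continuity of both sides of \eqref{almost} in $a$ (because $\rho_n^0\circ\vf$ and $\rho_\infty$ are unitary, the change on either side is bounded by $\sum_\gamma\|a(\gamma)-a'(\gamma)\|$) together with monotonicity in $\ve$ extends the conclusion to all test data. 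So your route is logically sound and in fact establishes the general implication \ref{pr} $\Rightarrow$ \ref{prs} for any countable group, which the paper never needs to invoke because Louder--Magee covers all the relevant surface groups in one stroke. Citing Louder--Magee is shorter; your route is more self-contained and shows that \cref{mainth2} is at least as widely applicable as \cref{mainth}.
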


\begin{proof}
By \cref{amena}, $\lambda_{\ess}(S)\ge\lambda_0(\tilde S)$, and hence $\mu=\lambda_0(\tilde S)>0$.
If $S$ is non-compact, then the fundamental group of $S$ is a finitely generated free group and, hence, satisfies \ref{prs}.
If $S$ is closed, of Euler characteristic $\le-2$ if non-orientable, then the fundamental group of $S$ satisfies \ref{prs}.
\end{proof}

\begin{remo}
In Corollaries \ref{maincor} and \ref{maincors}, we discuss applications of our main results to surfaces.
There are also applications to higher dimensional manifolds, and to hyperbolic manifolds in particular.
Michael Magee pointed out to us the very interesting case of quasi-Fuchsian threefolds of type one with a reference to Rufus Bowen \cite{Bowen79}.
Another important class of examples are Schottky manifolds.
\end{remo}

By \cite[Lemma 2.1]{Magee25}, if $\Gamma$ satisfies \ref{prs}, then any subgroup of $\Gamma$ of finite index also satisfies \ref{prs}. Therefore, \cref{mainth2} has another immediate consequence.

\begin{coro}\label{mainco}
Assume that the Ricci curvature of $M$ is bounded from below, \eqref{lambda} holds, and $\Gamma$ satisfies \ref{prs}.
Then, for any $0<\Lambda<\mu$, $M$ admits a tower of $[0,\Lambda]$-stable finite Riemannian coverings.
\end{coro}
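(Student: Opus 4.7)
The plan is to construct the tower inductively, applying \cref{mainth2} at each stage and checking that its hypotheses are preserved when one passes to the new covering. Set $M_0 := M$ and $\Gamma_0 := \Gamma$. Suppose inductively that we have built a tower
\[
M = M_0 \leftarrow M_1 \leftarrow \cdots \leftarrow M_j
\]
of finite Riemannian coverings, each composition $M_j \to M$ being $[0,\Lambda]$-stable, and with $\Gamma_j := \pi_1(M_j)$ a finite index subgroup of $\Gamma$. I would first verify that $M_j$ satisfies the hypotheses of \cref{mainth2}: the Ricci bound passes to any Riemannian covering; since $M_j \to M$ is a finite Riemannian covering, $\lambda_{\ess}(M_j) = \lambda_{\ess}(M)$, and since $\widetilde{M_j} = \tilde M$ canonically, $\lambda_0(\widetilde{M_j}) = \lambda_0(\tilde M)$, so \eqref{lambda} still holds with the same value $\mu$; and $\Gamma_j$ satisfies \ref{prs} by \cite[Lemma 2.1]{Magee25} applied to the finite index inclusion $\Gamma_j \hookrightarrow \Gamma$.

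I would then apply \cref{mainth2} to $M_j$ with the same $\Lambda$ to obtain a sequence of $[0,\Lambda]$-stable finite coverings $M_j' \to M_j$ with degrees tending to infinity; since the degrees eventually exceed one, I can pick one of degree at least two and call it $p_{j+1}\colon M_{j+1}\to M_j$. The remaining observation is that $[0,\Lambda]$-stability composes: if $p\colon M'\to M$ and $p'\colon M''\to M'$ are both $[0,\Lambda]$-stable, then so is $p\circ p'$, since
\[
[0,\Lambda]\cap\sigma(M)\;=\;[0,\Lambda]\cap\sigma(M')\;=\;[0,\Lambda]\cap\sigma(M'')
\]
as multisets. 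Consequently each composition $M_{j+1}\to M$ in the induction is $[0,\Lambda]$-stable over $M$, completing the inductive step and producing the desired infinite tower.

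The only input beyond \cref{mainth2} itself is that the hypotheses propagate to finite index subgroups: the Ricci and spectral bounds pass trivially (indeed with unchanged constants), while the transfer of \ref{prs} is exactly the content of the cited \cite[Lemma 2.1]{Magee25}. So there is no substantial obstacle here; the statement is a clean packaging of these stability observations together with \cref{mainth2} into an inductive construction.
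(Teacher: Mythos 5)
Your proof is correct and follows essentially the same route as the paper: the paper's own proof consists of the single observation, preceding the corollary, that \cite[Lemma 2.1]{Magee25} lets (PRS) pass to finite-index subgroups, whence iterating \cref{mainth2} gives the tower — which is precisely your induction. You spell out the bookkeeping (the Ricci bound, $\lambda_{\ess}$, and $\lambda_0(\tilde M)$ are all inherited by a finite connected cover, and $[0,\Lambda]$-stability composes) that the paper leaves implicit. The one point you elide, which the paper also elides, is that the covering $M_{j+1}$ produced by \cref{mainth2} could a priori be disconnected; this is harmless because if $[0,\Lambda]\cap\sigma(M_j)\ne\emptyset$ (equivalently $\lambda_0(M)\le\Lambda$, in which case $\lambda_0(M)$ is a genuine eigenvalue since $\Lambda<\lambda_{\ess}(M)$), $[0,\Lambda]$-stability forces connectedness, while if $[0,\Lambda]\cap\sigma(M_j)=\emptyset$ the whole question is vacuous and any connected finite cover works.
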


The setup in the text is more general than in the introduction.
Instead of universal coverings and fundamental groups, we consider infinite normal coverings $\hat p\colon\hat M\to M$ and finite   intermediate coverings $p\colon M'\to M$, where intermediate means that $\hat p=p\circ p'$ for a covering $p'\colon\hat M\to M'$.

\subsection{On properties \ref{pr} and \ref{prs}}\label{subsecaas}
Finding groups which satisfy \ref{pr} or \ref{prs} is an important question with a long history.
We state some results here and refer the interested reader to \S 1.4 of \cite{MageePudervanHandel25} for a more detailed account. 

It follows from the breakthrough result \cite{BordenaveCollins19} of Bordenave-Collins that finitely generated free groups satisfy \ref{pr}.
Observe here that the fundamental group of a non-compact orientable surface of finite topological type and negative Euler characteristic is a free group with finitely many (and at least two) generators. 
Very recently Magee-Puder-van Handel \cite{MageePudervanHandel25} have shown that the fundamental group of a closed orientable surface of genus at least two satisfies \ref{pr}.

In \cite[Theorem 1.1]{LouderMagee25}, Louder-Magee showed that limit groups satisfy \ref{prs}. It is noteworthy that fundamental groups of orientable closed surfaces \cite{B62}, as well as fundamental groups of non-orientable closed surfaces with Euler characteristic at most $-2$ (cf. \cite[pp. 414--415]{B67}) are limit groups.

\subsection{Structure of the paper}
We collect some preliminary results in \cref{prelim}. These include some basics of $\sigma(\Delta)$ on a Riemannian manifold, that of a finite cover, inequalities between $\lambda_0(\bar{M})$ and $\sigma_{\ess}(M)$, parametrisation of finite coverings and function spaces on them and two geometric inequalities. Although our approach follows the strategy in \cite{HideMagee21} closely, unlike in the hyperbolic plane, we do not have an explicit global expression for the resolvent operator of the Laplacian on $\hat{M}$.
Due to this, we heavily rely on the gradient estimate of Cheng and Yau \cite[Theorem 6]{ChengYau}.
It is worth pointing out that the results \cite{HideMagee21}, \cite{HideMoyNaud25} and \cite{Moy25} heavily depend on the asymptotic behaviour of the heat kernel at infinity. Our approach, thanks to the Cheng-Yau estimate, is completely free from such asymptotic behaviour.
We restate the Cheng-Yau estimate in the form we need it in \cref{CY}.

In \cref{secparex}, we start by discussing the decomposition of $M$ into a compact interior part and a neighborhood of infinity.
Using the characterization of the bottom of the essential spectrum as the limiting $\lambda_0(M\setminus K)$, where $K$ runs over the set of compact subsets of $M$, and a smoothing result of Cheeger and Colding \cite[Theorem 6.3]{CheegerColding},
we construct a parametrix for a neighborhood of infinity.
Both, the Cheng-Yau estimate and the Cheeger-Colding result, assume a lower bound on the Ricci curvature.
They are the reason for the corresponding assumption in our main theorem.
We restate the Cheeger-Colding result in the form we need it in \cref{CC}.

In \cref{secparin}, we commence our discussion of the interior parametrix. In the first step, we appropriately modify the resolvent on $\tilde M$. This part is significantly more complicated compared to \cite{HideMagee21} due to the non-explicit global expression of the resolvent operator on $\tilde M$. We develop the corresponding kernel estimates in \cref{seckerest} with the help of the Cheng-Yau gradient estimate \cref{CY}.
We then patch the ends and interior parametrices into a parametrix on $M'$ in \cref{secauto}.
Finally, we use estimates on the parametrix to prove an extended version of \cref{maintheo} in \cref{secrandom},
where a finite-dimensional approximation of the interior parametrix gives rise to the finitely supported maps $a_i$ in the statement of the theorem.

\section{Preliminaries}\label{prelim}
Let $M^m$ be a complete and connected Riemannian manifold.
Denote by $\Delta$ its Laplace-Beltrami operator, acting on the space $C^\infty(M)$ of smooth functions on $M$. 
Recall that $\Delta$ is {\em essentially self-adjoint} on $C^\infty_c(M)\subseteq L^2(M)$.
Its closure will also be denoted by $\Delta$.
The spectrum of the closure, depending on the context denoted by
\begin{align*}
    \sigma(M,\Delta) = \sigma(\Delta) = \sigma(M),
\end{align*}
can be decomposed into two sets,
\begin{align*}
    \sigma(M) = \sigma_d(M) \sqcup \sigma_{\ess}(M),
\end{align*}
the {\em discrete spectrum} and the {\em essential spectrum}.
Recall that $\sigma_d(M)$ consists of isolated eigenvalues of $\Delta$ of finite multiplicity
and that $\sigma_{\ess}(M)$ consists of those $\lambda\in\R$ for which $\Delta-\lambda$ is not a Fredholm operator.
By elliptic regularity theory, $\sigma(M)=\sigma_d(M)$ if $M$ is compact.
By the above characterization of the discrete spectrum, $\sigma(M)=\sigma_{\ess}(M)$ if $M$ is homogeneous and non-compact.

Denote by $\lambda_0(M)\le\lambda_{\ess}(M)$ the bottom of $\sigma(M)$ and $\sigma_{\ess}(M)$, respectively.
Recall that
\begin{align}\label{lambdaz}
    \lambda_0(M) = \inf_f \int_M|\nabla f\|^2/\int_M f^2 = \|\nabla f\|_{L^2}^2/|f\|_{L^2}^2,
\end{align}
where the infimum is taken over all non-zero $f\in C^\infty_c(M)$.
If $M$ is compact, then $\lambda_0(M)=0<\lambda_{\ess}(M)=\infty$.
Furthermore, $0$ is an eigenvalue of $\Delta$ of multiplicity one with constant functions as eigenfunctions.
In general, $\lambda_0(M)$ may be positive and may belong to $\sigma_{d}(M)$
or we may have $\lambda_0(M)=\lambda_{\ess}(M)$.

\subsection{Spectrum under finite Riemannian coverings}\label{subcov}
Let $p\colon M'\to M$ be a finite Riemannian covering.
Then we have
\begin{align}\label{bottomeq}
    \lambda_0(M')=\lambda_0(M) \quad\text{and}\quad \lambda_{\ess}(M')=\lambda_{\ess}(M).
\end{align}
The first equality follows easily from \eqref{lambdaz}, the second from \cref{lambdae}, observing that \eqref{lambdaz} also holds for Dirichlet spectra.

Now we recall some results on the bottom of spectra, starting with a classical result of Brooks \cite{Brooks81}.

\begin{thm}\label{thm:Brooks}
If $M$ is a closed and connected Riemannian manifold, then $\lambda_0(\tilde{M})>0$ if and only if $\pi_1(M)$ is non-amenable.
\end{thm}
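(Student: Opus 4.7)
The plan is to establish the chain of equivalences
\[
\lambda_0(\tilde M) > 0 \iff h(\tilde M) > 0 \iff h_c(\Gamma) > 0 \iff \Gamma \text{ is non-amenable},
\]
where $h(\tilde M)$ is the geometric Cheeger constant of $\tilde M$ (infimum of $\Area(\partial K)/\Vol(K)$ over relatively compact domains with smooth boundary), and $h_c(\Gamma)$ is the combinatorial Cheeger constant of the Cayley graph of $\Gamma$ relative to some fixed finite generating set.

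The first equivalence uses the classical Cheeger inequality $\lambda_0(\tilde M) \ge h(\tilde M)^2/4$ together with a Buser-type inequality $\lambda_0(\tilde M) \le C\, h(\tilde M)$ valid for manifolds with a lower bound on Ricci curvature. Since $M$ is closed, its Ricci curvature is bounded and lifts to a uniform lower bound on $\tilde M$; Bishop--Gromov volume comparison then yields the required Buser estimate. The last equivalence is F\o lner's classical characterization: $\Gamma$ is amenable iff there exists a sequence of finite subsets $F_n \subset \Gamma$ with $|\partial_S F_n|/|F_n| \to 0$ for any (equivalently some) finite generating set $S$.

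The central geometric step is the middle equivalence, which requires a dictionary between compact domains in $\tilde M$ and finite subsets of $\Gamma$. I would fix a relatively compact fundamental domain $D \subset \tilde M$ with piecewise smooth boundary and take $S = \{\gamma \in \Gamma \setminus \{e\} : \overline{\gamma D}\cap \overline D \neq \emptyset\}$, which is finite and generates $\Gamma$ by connectedness of $\tilde M$. Given a finite F\o lner set $F \subset \Gamma$, the domain $\Omega_F = \bigcup_{\gamma \in F}\overline{\gamma D}$ satisfies $\Vol(\Omega_F) = |F|\Vol(D)$ and $\Area(\partial \Omega_F) \le |\partial_S F|\cdot \Area(\partial D)$, so $h_c(\Gamma)=0$ forces $h(\tilde M)=0$. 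Conversely, given a compact $K \subset \tilde M$ with $\Area(\partial K)/\Vol(K)$ small, define $F = \{\gamma : \gamma D \cap K \neq \emptyset\}$ and compare $|\partial_S F|/|F|$ to $\Area(\partial K)/\Vol(K)$.

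The main obstacle is this last comparison, because generic $K$ contain partial translates $\gamma D$, so naive rounding introduces uncontrolled boundary. The standard remedy, which is exactly the technical core of Brooks' argument, is a co-area/averaging device: one applies the isoperimetric inequality not to $K$ itself but to the superlevel sets of the function $\gamma \mapsto \Vol(\gamma D \cap K)/\Vol(D)$ on $\Gamma$, and integrates over the level parameter to transfer the continuous boundary estimate to a discrete one. With that discretization in hand, the three equivalences combine into the theorem.
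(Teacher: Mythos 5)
The paper does not prove this theorem; it is stated and cited to Brooks \cite{Brooks81}, so there is no in-paper proof to compare against. Your outline is a correct reconstruction of Brooks' original argument: the chain $\lambda_0(\tilde M)>0 \Leftrightarrow h(\tilde M)>0 \Leftrightarrow h_c(\Gamma)>0 \Leftrightarrow \Gamma$ non-amenable, with the discretization via a compact fundamental domain and the co-area averaging over superlevel sets of $\gamma\mapsto\Vol(\gamma D\cap K)/\Vol(D)$, is exactly the technical heart of \cite{Brooks81}, and you have correctly identified where the real work lies.

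One small departure from Brooks worth flagging: you close the loop $\lambda_0(\tilde M)>0\Rightarrow h(\tilde M)>0$ via a Buser-type inequality, which requires the non-compact version of Buser's estimate (of the form $\lambda_0\le c_1 a\,h + c_2 h^2$ under $\Ric\ge -(n-1)a^2$, not simply $\lambda_0\le Ch$ as written, and not a consequence of Bishop--Gromov alone). That route does work, but Brooks' original argument avoids Buser entirely — and historically had to, since Buser's inequality postdates \cite{Brooks81}. He instead proves ``$\Gamma$ amenable $\Rightarrow\lambda_0(\tilde M)=0$'' directly by turning a F\o lner sequence $F_n$ into test functions: take the indicator of $\bigcup_{\gamma\in F_n}\gamma D$, mollified across a collar of bounded width, and observe that the Rayleigh quotient is of order $|\partial_S F_n|/|F_n|\to 0$. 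This direct test-function construction is a bit more elementary and self-contained than invoking Buser, though both are valid; with either choice the remaining equivalences go through as you describe.
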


After a previous extension of this result in \cite{BMP20}, where a lower bound on the Ricci curvature is assumed, the most general version to date (actually for scalar Schr\"odinger operators) was proved by the last named author as follows \cite{Po}.

\begin{thm}\label{thmpp}
If $q\colon \hat M\to M$ is a Riemannian covering such that $\lambda_0(M)<\lambda_{\ess}(M)$,
then $\lambda_0(\hat M)>\lambda_0(M)$ if and only if $q$ is non-amenable.
\end{thm}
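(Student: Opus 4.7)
The plan is to carry out the Doob ground-state transform and then reduce the theorem to a Brooks-type spectral criterion for a weighted Dirichlet form on the covering.

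Since $\lambda_0(M) < \lambda_{\ess}(M)$, the bottom $\lambda_0(M)$ is an isolated eigenvalue of finite multiplicity, and standard Schr\"odinger/Harnack theory produces a positive ground state $\phi_0 \in L^2(M)$ with $\Delta \phi_0 = \lambda_0(M)\phi_0$. Write $\hat\phi_0 = \phi_0 \circ q$. A direct integration by parts applied to $u = f\hat\phi_0$, for $f \in C_c^\infty(\hat M)$, yields the $h$-transform identity
\[
\int_{\hat M}|\nabla u|^2 - \lambda_0(M)\int_{\hat M}u^2 \;=\; \int_{\hat M}|\nabla f|^2 \hat\phi_0^2.
\]
Since $\hat\phi_0 > 0$ is smooth, $f \mapsto f\hat\phi_0$ bijects $C_c^\infty(\hat M)$ onto itself, so
\[
\lambda_0(\hat M) - \lambda_0(M) \;=\; \inf_{0 \neq f \in C_c^\infty(\hat M)} \frac{\int_{\hat M}|\nabla f|^2 \hat\phi_0^2}{\int_{\hat M} f^2 \hat\phi_0^2}.
\]
The theorem is therefore equivalent to showing that this weighted spectral bottom is strictly positive if and only if $q$ is non-amenable.

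For the easy direction, I would assume that $q$ is amenable, i.e.\ the quasi-regular representation of $\pi_1(M,x)$ on $\ell^2(\pi_1(M,x)/q_*\pi_1(\hat M,\hat x))$ admits almost-invariant unit vectors. Using such a vector I would stitch together a bump on a relatively compact piece of $M$ into a test function $f_n \in C_c^\infty(\hat M)$ supported on a large, roughly $\pi_1$-invariant union of lifts; the derivatives of the cutoff concentrate on a F\o lner boundary, which carries vanishingly small $\hat\phi_0^2$-mass because $\int_M \phi_0^2 = 1$ is finite. This forces the weighted Rayleigh quotient displayed above to tend to zero, so the bottom equals zero.

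The main obstacle is the converse: produce a uniform strict inequality when $q$ is non-amenable. Here I would adapt the Brooks/Kaimanovich strategy, but in the weighted, non-compact setting the compactness of the base has to be replaced by the finite total weight $\int_M \phi_0^2 = 1$. Concretely, I would exhaust $M$ by relatively compact open sets $U_n$ with $\int_{M\setminus U_n}\phi_0^2 \to 0$; discretize by a maximal $r$-separated net in $U_n$ whose lift to $\hat M$ inherits the $\pi_1$-coset combinatorics; compare the weighted Dirichlet form in a tube around the lifted net to a combinatorial Dirichlet form on the resulting graph; and invoke Kesten's criterion to convert non-amenability of $q$ into a spectral gap for the combinatorial Laplacian. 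The smallness of $\int_{M\setminus U_n}\phi_0^2$ absorbs the tail of the weighted form outside $q^{-1}(U_n)$, upgrading the combinatorial spectral gap to a uniform lower bound $\lambda_0(\hat M) - \lambda_0(M) > 0$. The delicate point, and the place where the argument departs most from the closed-manifold case, is to keep the constant in the discretization step independent of $n$ when the base is non-compact.
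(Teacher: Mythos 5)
The paper does not prove this theorem; it is stated as \cref{thmpp} and cited from the reference~\cite{Po} (Polymerakis, \emph{J.~Spectr.\ Theory} 12 (2022)). So there is no in-paper argument to compare against, but the proposal can be assessed on its own terms. Your ground-state reduction is correct and is indeed the standard starting point: since $\lambda_0(M)<\lambda_{\ess}(M)$ the bottom is a simple isolated eigenvalue with positive eigenfunction $\phi_0\in L^2(M)$, the $h$-transform identity you write is an exact computation, and the bijection $f\mapsto f\hat\phi_0$ on $C_c^\infty(\hat M)$ gives
\[
\lambda_0(\hat M)-\lambda_0(M)=\inf_{0\neq f\in C_c^\infty(\hat M)}\frac{\int_{\hat M}\|\nabla f\|^2\hat\phi_0^2}{\int_{\hat M}f^2\hat\phi_0^2},
\]
reducing the theorem to a statement about the weighted Dirichlet form with finite base mass $\int_M\phi_0^2=1$. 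Up to this point the proposal is sound.

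The genuine gap is in both subsequent steps, and most seriously in the non-amenable direction, as you partly concede. For the amenable direction, ``stitching a bump'' over a F\o lner set does not automatically yield vanishing Rayleigh quotient: if you multiply a compactly supported bump lift by a F\o lner indicator, the gradient of the bump contributes a term of the same order as the denominator on every F\o lner translate, not just on the boundary, so the quotient need not tend to zero. The correct test functions must be essentially constant on fundamental domain translates and decay only across the F\o lner boundary; making the transition region have bounded gradient while carrying small $\hat\phi_0^2$-mass requires real care precisely because the fundamental domain is non-compact. For the non-amenable direction, the discretize-to-a-net/Kesten route you sketch is the Brooks--Kanai strategy, and to compare the weighted continuous Dirichlet form with a combinatorial one you need local Poincar\'e and volume-doubling estimates that are \emph{uniform over the net}. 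That uniformity comes for free when the base is closed, and is recovered under a Ricci lower bound (this is the situation in~\cite{BMP20}), but the theorem as stated, and as proved in~\cite{Po}, assumes no curvature bound at all; the finiteness of $\int_M\phi_0^2$ controls the weight but not the local geometry, so it cannot by itself supply the discretization constants. You have correctly identified where the difficulty sits, but the missing ingredient is the actual mechanism by which~\cite{Po} dispenses with the geometric bounds (the argument there is functional-analytic, formulated for general Schr\"odinger operators, and avoids the net-and-Kesten comparison), and that mechanism is the mathematical content of the theorem.
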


Here the covering is called amenable if the right action of the fundamental group of $M$ on the fibers of $q$ is amenable.

Next we discuss the essential spectrum and its bottom.
If $M$ is closed and connected, then the essential spectrum of $M$ is empty, that is, $\lambda_{\ess}(M)=\infty$.
It is the special case of the following classical result, different versions of which have been proved at different times by different authors.

\begin{thm}\label{lambdae}
For any complete Riemannian manifold $M$, \[\lambda_{\ess}(M)=\sup\lambda_0(M\setminus K^\circ),\]
where the supremum is taken over all compact domains $K$ in $M$ with smooth boundary and $\lambda_0$ on the right refers to Dirichlet spectra.
\end{thm}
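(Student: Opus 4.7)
The plan is to verify the two inequalities in the claimed equality, following a standard Persson-type argument.

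For $\lambda_0(M\setminus K^\circ)\le\lambda_{\ess}(M)$ (trivial when $\lambda_{\ess}(M)=\infty$), I would pick any $\mu\in\sigma_{\ess}(M)$ and invoke Weyl's criterion to obtain a normalized singular sequence $(f_n)\subset D(\Delta)$ with $f_n\rightharpoonup 0$ in $L^2(M)$ and $\|(\Delta-\mu)f_n\|_{L^2}\to 0$. Choose smooth functions $\chi,J\colon M\to[0,1]$ with $\chi^2+J^2\equiv 1$, $\chi\equiv 0$ on a neighborhood of $K$, and $\chi\equiv 1$ outside a compact $K_1\supset K$. The elementary pointwise identity
\[
|\nabla(\chi f_n)|^2+|\nabla(J f_n)|^2 = |\nabla f_n|^2+(|\nabla\chi|^2+|\nabla J|^2)f_n^2,
\]
obtained by expanding squares and using $\chi^2+J^2\equiv 1$, integrated over $M$ and combined with $\int|\nabla f_n|^2=\langle\Delta f_n,f_n\rangle\to\mu$ together with Rellich--Kondrachov compactness on $K_1$ (which forces $f_n\to 0$ strongly in $L^2(K_1)$ and kills the last integral), gives $\limsup_n\int|\nabla(\chi f_n)|^2\le\mu$. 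Since $\|\chi f_n\|_{L^2}^2=1-\|Jf_n\|_{L^2}^2\to 1$, the Rayleigh quotient of $\chi f_n$ has $\limsup\le\mu$. As $\chi f_n$ vanishes on a neighborhood of $K$ and lies in $H^1(M)$, it is $H^1$-approximable by elements of $C^\infty_c(M\setminus K)$, so $\lambda_0(M\setminus K^\circ)\le\mu$; letting $\mu\to\lambda_{\ess}(M)$ completes this direction.

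For $\sup_K\lambda_0(M\setminus K^\circ)\ge\lambda_{\ess}(M)$, I argue by contradiction: suppose $\Sigma:=\sup_K\lambda_0(M\setminus K^\circ)<\lambda_{\ess}(M)$ and fix $\lambda\in(\Sigma,\lambda_{\ess}(M))$. Exhaust $M$ by compact domains $K_n$ with smooth boundary. Since $\lambda_0(M\setminus K_n^\circ)\le\Sigma<\lambda$, for each $n$ there is a nonzero $g_n\in C^\infty_c(M\setminus K_n)$ with $\int|\nabla g_n|^2<\lambda\int g_n^2$. After passing to a subsequence, using that each $g_n$ has compact support, I may assume the sets $\supp g_n$ are pairwise disjoint. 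Then $(g_n)$ is orthogonal in $L^2$, and locality of $\Delta$ gives $\langle\Delta g_i,g_j\rangle=\int\nabla g_i\cdot\nabla g_j=0$ for $i\ne j$, so every nonzero $f$ in the infinite-dimensional span of $(g_n)$ satisfies $\langle\Delta f,f\rangle<\lambda\|f\|^2$. By the min-max principle, the spectral projection $E_{(-\infty,\lambda]}(\Delta)$ has infinite-dimensional range; but below $\lambda_{\ess}(M)$ the spectrum consists of isolated eigenvalues of finite multiplicity, so infinitely many of them in $(-\infty,\lambda]$ must accumulate, producing a point of $\sigma_{\ess}(M)$ in $(-\infty,\lambda]$, contradicting $\lambda<\lambda_{\ess}(M)$.

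The main technical obstacle lies in the first direction: one must leverage Rellich--Kondrachov compactness to convert the weak convergence $f_n\rightharpoonup 0$ supplied by Weyl's criterion into strong $L^2$-convergence on the relatively compact set $K_1$, so as to kill the cutoff error terms. Once this is achieved, the argument reduces Persson's classical theorem to two standard ingredients, Weyl's criterion and the min-max principle, and is completely insensitive to the Ricci-curvature hypotheses appearing elsewhere in the paper.
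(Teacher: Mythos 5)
The paper states this theorem as a classical result and does not supply a proof, so there is no in-paper argument to compare against; your task is therefore to supply a correct classical argument, and you do. The first inequality is the standard IMS-localization argument applied to a Weyl singular sequence for $\mu\in\sigma_{\ess}(M)$: the identity you write is exactly the IMS formula (the cross terms cancel because $\chi\nabla\chi+J\nabla J=\tfrac12\nabla(\chi^2+J^2)=0$), boundedness of $\|\nabla f_n\|_{L^2}$ follows from $\|\nabla f_n\|_{L^2}^2=\langle\Delta f_n,f_n\rangle\to\mu$, Rellich on the compact $K_1$ kills the error term, and the approximation of $\chi f_n$ by $C^\infty_c(M\setminus K)$ functions uses completeness of $M$ via an exhaustion by cutoffs — which is exactly where the hypothesis of completeness enters and is worth saying explicitly. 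The second inequality is the usual disjoint-support contradiction: the $g_n$ with pairwise disjoint supports are orthogonal both in $L^2$ and in the Dirichlet form, so every nonzero element of their span has Rayleigh quotient $<\lambda$, forcing $\dim\operatorname{ran}E_{(-\infty,\lambda]}=\infty$ and hence a point of essential spectrum in $(-\infty,\lambda]$. Both directions are correct, and together they reproduce the classical Persson–Donnelly–Li argument that the paper is implicitly invoking.
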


Since amenable Riemannian coverings preserve the bottom of Dirichlet spectra, we have the following immediate consequence, which may be found for instance in \cite[Theorem 7.10]{Po2}.

\begin{thm}\label{amena}
Let $M$ be a complete Riemannian manifold and suppose that there exists a compact $K\subset M$ with smooth boundary such that each connected component of $M\setminus K$ has amenable fundamental group.
Then $\lambda_{\ess}(M)\ge\lambda_0(\tilde{M})$.
\end{thm}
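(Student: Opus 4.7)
The plan is to combine Theorem \ref{lambdae} with the fact, alluded to just before the theorem statement, that amenable Riemannian coverings preserve the bottom of the Dirichlet spectrum. By Theorem \ref{lambdae}, $\lambda_{\ess}(M)\ge\lambda_0(M\setminus K^\circ)$ for the specific compact $K$ supplied by the hypothesis, so it is enough to establish $\lambda_0(M\setminus K^\circ)\ge\lambda_0(\tilde M)$, where $\lambda_0$ on the left denotes the Dirichlet spectrum bottom.

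Because $K$ has smooth boundary, $M\setminus K^\circ$ has at most finitely many connected components $U_1,\dots,U_\ell$, and $\lambda_0(M\setminus K^\circ)=\min_j\lambda_0(U_j)$ by additivity of Dirichlet spectra across components. Fix one component $U=U_j$, let $q\colon\tilde M\to M$ be the universal covering, and pick a connected component $\tilde U$ of $q^{-1}(U)$. A standard path-lifting argument identifies the deck group of $q|_{\tilde U}\colon\tilde U\to U$ with the image of the natural map $\pi_1(U)\to\pi_1(M)$: a loop $\alpha$ in $U$ lifts to a path staying in $\tilde U$ (by connectedness of the lift and of $\tilde U$), while conversely any element of $\pi_1(M)$ stabilizing $\tilde U$ is represented by such a loop via projecting a path in $\tilde U$ between the relevant lifts. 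Since $\pi_1(U)$ is amenable by assumption and amenability descends to quotients, the deck group of $q|_{\tilde U}$ is amenable, so $q|_{\tilde U}$ is an amenable Riemannian covering.

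Invoking preservation of the Dirichlet bottom under amenable coverings (cf.\ \cite[Theorem~7.10]{Po2}) now gives $\lambda_0(U)=\lambda_0(\tilde U)$. On the other hand, $\tilde U\subset\tilde M$ is open, so test functions compactly supported in $\tilde U$ are admissible on $\tilde M$, and the variational formula \eqref{lambdaz} yields $\lambda_0(\tilde U)\ge\lambda_0(\tilde M)$. Chaining these inequalities across all components $U_j$ produces $\lambda_0(M\setminus K^\circ)\ge\lambda_0(\tilde M)$, and combination with Theorem \ref{lambdae} finishes the proof. The only step of substance is the identification of $q|_{\tilde U}$ as an amenable cover; once this is in place, the theorem is an immediate consequence of the two inputs, matching the author's characterization of it as an immediate consequence.
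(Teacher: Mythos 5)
Your proof is correct and follows exactly the route the paper intends: reduce via Theorem~\ref{lambdae} to bounding the Dirichlet bottom of $M\setminus K^\circ$, restrict the universal cover $\tilde M\to M$ over each end component $U_j$, observe that the restricted covering $\tilde U_j\to U_j$ is normal with deck group isomorphic to $\mathrm{im}\bigl(\pi_1(U_j)\to\pi_1(M)\bigr)$ (a quotient of the amenable $\pi_1(U_j)$, hence amenable), apply preservation of the Dirichlet bottom under amenable coverings, and finish with domain monotonicity of $\tilde U_j\subset\tilde M$. The paper itself offers no written proof, only the remark that the statement is an immediate consequence of Theorem~\ref{lambdae} together with the amenable-covering fact and a pointer to \cite[Theorem~7.10]{Po2}; your write-up supplies exactly the details this remark leaves implicit, in particular the step---not spelled out in the paper---that the relevant amenable covering is $\tilde U_j\to U_j$ rather than the full universal covering of $U_j$.
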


In particular, any hyperbolic surface $S$ of finite topological type satisfies the inequality $\lambda_{\ess}(S) \ge \lambda_0(\tilde{S})=1/4$.
A more general class of manifolds that satisfies this latter kind of inequality is given by the following special case of \cite[Theorem B]{BP24}.

\begin{thm}\label{geofin}
If $M$ is non-compact and geometrically finite (in the sense of Bowditch \cite{Bo}), then $\lambda_{\ess}(M)\ge\lambda_0(\tilde{M})$ with equality if $\tilde M$ is symmetric.
\end{thm}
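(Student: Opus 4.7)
The plan is to verify both inequalities via the variational characterization of $\lambda_{\ess}$ given by \cref{lambdae}, combined with an end-by-end analysis based on the structure of geometrically finite manifolds.

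For the lower bound $\lambda_{\ess}(M)\ge\lambda_0(\tilde M)$, I would first invoke Bowditch's structure theory to select a compact $K\subset M$ with smooth boundary whose complement is a finite disjoint union of standard ends $E_1,\dots,E_r$, each of one of two kinds: a cuspidal end, modelled on the quotient of a horoball in $\tilde M$ by a virtually nilpotent (hence amenable) parabolic stabiliser, or a non-cuspidal ``exterior'' end sitting outside the convex core. The key claim is that $\lambda_0(E_i)\ge\lambda_0(\tilde M)$ for every $i$: combined with \cref{lambdae} and the Dirichlet version of \eqref{lambdaz}, this yields the desired inequality. For cuspidal ends, the covering $\tilde E_i\to E_i$ has deck group equal to the cusp stabiliser, which is amenable; since amenable coverings preserve the bottom of the spectrum (\cref{thm:Brooks}, or its generalization \cref{thmpp}), one obtains $\lambda_0(E_i)=\lambda_0(\tilde E_i)$, and since $\tilde E_i$ is a horoball inside $\tilde M$, monotonicity of the Dirichlet bottom under inclusion gives $\lambda_0(\tilde E_i)\ge\lambda_0(\tilde M)$. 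For exterior ends, amenability is in general lost, and one must instead run a direct argument using the convexity of the boundary of the convex core: an Agmon-type exponential decay estimate forces any Dirichlet eigenfunction on the end with Rayleigh quotient $<\lambda_0(\tilde M)$ to decay strictly into the end, ruling out its existence.

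For the equality $\lambda_{\ess}(M)\le\lambda_0(\tilde M)$ under the additional assumption that $\tilde M$ is a symmetric space, I would construct a Weyl sequence realising $\lambda_0(\tilde M)$ in the essential spectrum. When $M$ has a cusp, the homogeneity of $\tilde M$ along the horospherical nilpotent subgroup allows one to transplant approximate ground states of $\tilde M$ into a single cuspidal end: test functions built by multiplying the invariant density by a smooth cutoff supported deeper and deeper into the cusp are $L^2$-normalisable, have Rayleigh quotient tending to $\lambda_0(\tilde M)$, and have supports escaping every compact subset of $M$, so they form a Weyl sequence and $\lambda_{\ess}(M)\le\lambda_0(\tilde M)$ follows. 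In the purely convex cocompact case, one instead uses the product-type geometry of an exterior end together with the explicit radial form of the symmetric-space Laplacian to read off the same value.

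The principal obstacle is the end-by-end estimate $\lambda_0(E_i)\ge\lambda_0(\tilde M)$ in the non-cuspidal case, where the amenable-covering shortcut is unavailable and one must run a convexity/Agmon argument tied to the convex core boundary; this is the technical heart of \cite[Theorem B]{BP24} and relies crucially on the geometric finiteness hypothesis controlling what the exterior ends can look like. A secondary subtlety, confined to the equality half, is producing a Weyl sequence of unit $L^2$-mass with Rayleigh quotient tending to $\lambda_0(\tilde M)$ when no cusp is present: here one must choose the radial cutoff in the funnel carefully so that the symmetric-space ground state, pulled back via the end's product structure, is neither truncated too aggressively (which would spoil the Rayleigh quotient) nor too gently (which would spoil $L^2$-integrability).
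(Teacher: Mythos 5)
The paper does not prove \cref{geofin} at all: the sentence preceding the theorem explicitly states that it is ``the following special case of \cite[Theorem~B]{BP24}'', and no argument is supplied. So there is no ``paper's own proof'' to compare your attempt against; what you have produced is a reconstruction of what a proof would have to look like, and it should be judged on those terms.

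As a reconstruction, the overall architecture you propose is sensible, and the cuspidal case is handled correctly and essentially in the spirit of the surrounding material: \cref{lambdae} reduces the inequality to $\lambda_0(M\setminus K^\circ)\ge\lambda_0(\tilde M)$ for a suitable compact $K$; since $\lambda_0$ of a disjoint union of ends is the minimum of the end-wise bottoms, the problem is end-by-end; a cuspidal end is covered amenably by a horoball-type region of $\tilde M$, and the Dirichlet version of the amenability principle (which the paper itself uses to deduce \cref{amena} from \cref{lambdae}, via \cite[Theorem 7.10]{Po2}) together with domain monotonicity gives $\lambda_0(E_i)\ge\lambda_0(\tilde M)$. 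The equality half under the symmetric-space hypothesis via a Weyl sequence escaping into a cusp or funnel is also the expected mechanism.

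The genuine gap is exactly where you flag it, and it is not a minor technicality: for the non-cuspidal ends your argument reduces to the assertion that an ``Agmon-type exponential decay estimate'' forces $\lambda_0(E_i)\ge\lambda_0(\tilde M)$, but no such estimate is formulated, and it is not clear that one exists in the generality of Bowditch's variable-curvature geometric finiteness. Note also that the covering $\tilde E_i\to E_i$ for a funnel end gives the inequality $\lambda_0(\tilde E_i)\ge\lambda_0(E_i)$, which points in the \emph{wrong} direction, so one cannot reduce to $\tilde M$ the way the cusp case does; a direct geometric estimate on $E_i$ itself is unavoidable, and this is precisely the content that is being imported wholesale from \cite[Theorem~B]{BP24}. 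So your proposal is an accurate map of what needs to be proved, but it does not supply the core argument; given that the paper itself treats the theorem as a citation, the honest conclusion is that neither the paper nor your sketch contains a self-contained proof, and that the substantive content lives in \cite{BP24}.
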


\subsection{Finite coverings and associated function spaces}
We adapt the terminology of \cite{HideMagee21} to our setup.
Let $\hat p\colon\hat M\to M$ be an infinite normal covering of connected manifolds.
Let $p\colon M'\to M$ be a finite \emph{intermediate} covering; that is, $p$ factors through a covering $p'\colon\hat M\to M'$.
Let $x\in M$, $x'\in M'$ and $\hat x\in\hat M$ be points with $p'(\hat x)=x'$ and $p(x')=x$.
Then the image of $\pi_1(\hat M,\hat x)$ in $\pi_1(M,x)$ is a normal subgroup and consists of homotopy classes of loops which lift to closed loops in $\hat M$, regardless of the initial points of the lifts. 
In other words, lifting of loops defines a right action of the quotient group $\pi_1(\hat M,\hat x)/\pi_1(M,x)$ on the fiber of $\hat p$ over $x$.
Now $\Gamma\cong\pi_1(\hat M,\hat x)/\pi_1(M,x)$ with the latter action turned into a left action.

Since the lift of loops at $x$ to $M'$ consist of the images of their lifts to $\hat M$,
we obtain a right action of $\Gamma$ on the fiber $F_x$ of $p$ over $x$,
and we may think of $M'$ as the quotient space of $\hat M\times F_x$ via the diagonal action of $\Gamma$,
where we turn the right action on $F_x$ into a left action.

If $p$ has $n$ sheets, then we may identify $F_x=\{1,\dots,n\}$ and get a homomorphism $\vf\colon\Gamma\to S_n$, the symmetric group on $n$ letters.
Thus
\begin{align}\label{mvf}
	M' \sim M_\vf = \Gamma\backslash(\hat M\times\{1,\dots,n\}),
\end{align}
where the action of $\Gamma$ is given by $g(y,i) = (gy,\vf(g)i)$ and $p(\Gamma(y,i))=\hat p(y)=\Gamma y$.
Conversely, any $\vf\in\Hom(\Gamma,S_n)$ defines in this way an $n$-sheeted, possibly not connected, intermediate covering of $M$.
The induced cover $M_\vf$ is connected if the action of $\vf(\Gamma) \subset S_n$ is transitive on $\{1,\dots,n\}$.

Clearly, for $M'=M_\vf$ as in \eqref{mvf}, $C^\infty(M_\vf)$ consists of $\Gamma$-invariant smooth functions on $\hat M\times\{1,\dots,n\}$.
By setting
\begin{align}
    f_i(y) = f(y,i),
\end{align}
they may be viewed as $\Gamma$-equivariant smooth functions $\hat M\to\R^n$, where $\Gamma$ acts on $\R^n$ by
\begin{align}
    (\rho_n\circ\vf)(g)(x_1,\dots,x_n) = (x_{\vf(g)^{-1}(1)},\dots,x_{\vf(g)^{-1}(n)}).
\end{align}
Such a function $f$ is the pull back of a function on $M$ if $f_1(y)=\cdots=f_n(y)$.
We denote the space of pull backs of smooth functions by $C^\infty_{\textrm{old}}(M_\vf)$ and identify it with $C^\infty(M)$.

To compute $L^2$-norms, we choose a fundamental domain $F$ for $\Gamma$ and integrate over $F$.
Clearly, the $L^2$-orthogonal complement of $C^\infty_{\textrm{old}}(M_\vf)$ in $C^\infty(M_\vf)$ consists of the space of $f$ such that $f_1(y)+\dots+ f_n(y)=0$, which we denote by $C^\infty_{\textrm{new}}(M_\vf)$.
In other words, $C^\infty_{\textrm{new}}(M_\vf)$ consists of $\Gamma$-equivariant smooth functions from $\hat M$ to $V_n^0=\{x\in\R^n\mid \sum x_i=0\}$; that is,
\begin{align}
    C^\infty_{\textrm{new}}(M_\vf) = \{f\in C^\infty(\hat M,V_n^0) \mid \forall g\in\Gamma\colon f\circ g=(\rho_n^0\circ\vf)(g)\circ f\},
\end{align}where $\rho_n^0$ is the $(n-1)$-dimensional irreducible representation of $S_n$ on $V^n_0$.

We use similar terminology for $L^2$ and other spaces.
It is essential that the orthogonal decomposition \[L^2(M_\vf)=L^2_{\textrm{old}}(M_\vf)\oplus L^2_{\textrm{new}}(M_\vf)\] is invariant under the Laplacian.
Thus $I$-stability of $p\colon M_\vf\to M$ means that the spectrum of the Laplacian on $L^2_{\textrm{new}}(M_\vf)$ does not intersect $I$.

\subsection{Geometric estimates}
We will exploit the following result by Cheeger and Colding \cite[Theorem 6.33]{CheegerColding} (see also \cite[Theorem 4.13]{CheegerJiangNaber}).

\begin{thm}\label{CC}
Let $M^m$ be a complete Riemannian manifold with $\Ric_M \geq - (m-1)b^2$ for some $b > 0$. Then for any $R > b/2$ and $x \in M$ there exists a smooth $\chi \colon M \to [0,1]$ supported in $B(x,2R)$, with $\chi = 1$ in $B(x,R)$, such that
\[
R^2 \| \nabla \chi \|_\infty^2 + R^2 \|\Delta \chi\|_\infty \leq c(m),
\]
where $c(m)$ is a constant depending only on $m$.
\end{thm}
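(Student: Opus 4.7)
The plan is to follow the PDE-based construction of Cheeger and Colding, which bypasses the non-smoothness of the distance function $d_x = d(x,\cdot)$ by introducing a smooth auxiliary function enjoying two-sided $L^\infty$ control on its Laplacian. The naive choice $\chi = \phi(d_x/R)$ is unsuitable because $d_x$ is only Lipschitz and, under $\Ric \geq -(m-1)b^2$, the Laplacian comparison theorem supplies only the one-sided bound $\Delta d_x \leq (m-1)b\coth(b d_x)$; the corresponding formal computation of $\Delta\chi$ produces an estimate that scales like $bR$, not like a constant depending only on $m$.

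I would first rescale the metric so that $b = 1$, reducing to $\Ric \geq -(m-1)$ and $R > 1/2$. The key step is then to construct, on $B(x, 3R)$, a smooth function $G$ modelled on $R^2 - d_x^2$ and satisfying: (i) $G \geq c_1 R^2$ on $B(x, R)$ and $G \leq 0$ on $M \setminus B(x, 2R)$; (ii) $|\nabla G| \leq c_2 R$ and $|\Delta G| \leq c_3$ globally. The upper bound on $\Delta G$ follows from the Laplacian comparison theorem applied to a barrier built from the radial function on the model space of constant sectional curvature $-1$; the lower bound, which is subtler, uses either a Bishop-Gromov volume comparison or, following \cite[Theorem 4.13]{CheegerJiangNaber}, a Li-Yau/Cheng-Yau gradient estimate applied after smoothing by the heat semigroup.

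Having obtained $G$, set $\chi = \phi(G/R^2)$ for a smooth one-variable cutoff $\phi\colon\R \to [0,1]$ with $\phi \equiv 1$ on $[c_1,\infty)$ and $\phi \equiv 0$ on $(-\infty, 0]$, and with derivatives bounded by absolute constants. Then
\begin{equation*}
\nabla\chi = \phi'(G/R^2)\,\nabla G/R^2, \qquad \Delta\chi = \phi''(G/R^2)\,|\nabla G|^2/R^4 + \phi'(G/R^2)\,\Delta G/R^2,
\end{equation*}
which combined with the bounds on $|\nabla G|$ and $|\Delta G|$ yields $R^2\|\nabla\chi\|_\infty^2 + R^2\|\Delta\chi\|_\infty \leq c(m)$, as required.

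The main obstacle is precisely the construction of $G$ with a two-sided $L^\infty$ bound on $\Delta G$; the Laplacian comparison theorem directly supplies an upper bound via an explicit radial barrier, but the matching lower bound requires a delicate argument combining sub-solutions with volume or gradient comparison, and this is the technical heart of \cite[Theorem 6.33]{CheegerColding}. Once $G$ is in hand, the remainder of the proof is essentially a chain-rule computation.
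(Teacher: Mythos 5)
The paper does not prove this statement; it is quoted directly from Cheeger--Colding \cite[Theorem 6.33]{CheegerColding} (see also \cite[Theorem 4.13]{CheegerJiangNaber}), so there is no in-paper proof against which to compare yours. Evaluated on its own terms, your sketch has the right overall architecture---build a smooth auxiliary function $G$ with controlled gradient and Laplacian, then compose with a smooth one-variable cutoff---but it contains a sign confusion and, more seriously, leaves the crucial step uninstantiated. For $G$ modelled on $R^2 - d_x^2$, the Laplacian comparison theorem gives $\Delta d_x \leq (m-1)b\coth(bd_x)$ (in the barrier sense), hence an \emph{upper} bound on $\Delta(d_x^2)$ and therefore a \emph{lower} bound on $\Delta G$; it is the \emph{upper} bound on $\Delta G$ (equivalently a lower bound on $\Delta d_x$) that is unavailable from a Ricci lower bound alone and must be manufactured. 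You have these the wrong way around, and a "barrier built from the radial function on the model space" does not supply the missing side.

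This is precisely why neither cited source smooths $d_x^2$ directly. Cheeger--Colding solve a Dirichlet problem $\Delta u = c$ (a constant) on an annulus, so $\Delta u$ is controlled by construction rather than by comparison; Laplacian comparison and the maximum principle then enter only through the $C^0$ barrier estimates that locate $u$ relative to the model radial solution, and a Cheng--Yau/Li--Yau-type estimate supplies the gradient bound. Cheeger--Jiang--Naber instead apply the heat semigroup to a Lipschitz function of $d_x$ and use the semigroup's smoothing together with the distributional Laplacian comparison to bound both $\Delta$ and $\nabla$ of the evolved function. Your proposal gestures at both routes in a single sentence but does not carry either one through, and that is exactly where the mathematical content of the theorem lives; the concluding chain-rule computation, by contrast, is correct and unproblematic.
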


We will also need the following special case of the gradient estimate of Cheng and Yau \cite[Theorem 6]{ChengYau}.

\begin{thm}\label{CY}
Let $M^m$ be a complete Riemannian manifold with $\Ric_M \geq - (m-1)b^2$ for some $b > 0$. Let $f \in C^\infty(B(x,R))$ be a positive function satisfying $\Delta f \geq 0$ and
\[
\| \nabla (\Delta f) \| \leq C \| \Hess f \| + C \| \nabla f \|
\]
for some $R \geq \rho > 0$ and $C>0$. Then there exists $c>0$ depending only on $m$, $b$, $\rho$ and $C$ such that $\| \nabla f \| \leq c f$ in $B(x,R/2)$.
\end{thm}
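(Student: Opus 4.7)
The plan is to carry out the classical Cheng-Yau barrier argument in logarithmic form, with the extra terms coming from $\Delta f \ne 0$ absorbed using the structural hypothesis on $\nabla(\Delta f)$. Since $f>0$, I set $u=\log f$, so $\nabla u = \nabla f/f$ and $\|\nabla f\|=f\|\nabla u\|$; it therefore suffices to bound $\|\nabla u\|$ on $B(x,R/2)$ by a constant depending only on $m,b,\rho,C$. Writing $w:=\|\nabla u\|^2$ and $h:=\Delta f/f \ge 0$, we have $\Delta u=h-w$.

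The next step is to apply the Bochner identity to $w$,
\begin{equation*}
\tfrac{1}{2}\Delta w = \|\Hess u\|^2 + \langle \nabla u,\, \nabla(h-w)\rangle + \Ric(\nabla u,\nabla u),
\end{equation*}
and to combine it with $\|\Hess u\|^2 \ge (h-w)^2/m$, $\Ric(\nabla u,\nabla u)\ge -(m-1)b^2 w$, and the identity $\Hess f = f(\Hess u + \nabla u\otimes\nabla u)$, which converts the hypothesis $\|\nabla(\Delta f)\|\le C\|\Hess f\|+C\|\nabla f\|$ into a bound of the shape $\|\nabla h\|\le C\|\Hess u\| + Cw + C\|\nabla u\| + h\|\nabla u\|$ after expanding $\nabla h = \nabla(\Delta f)/f - (\Delta f/f^2)\nabla f$. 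Substituting these back produces a pointwise differential inequality of the form
\begin{equation*}
\Delta w \ge \tfrac{2}{m}(h-w)^2 - 2\langle \nabla u,\nabla w\rangle - \Phi\bigl(w,h,\|\Hess u\|\bigr),
\end{equation*}
where $\Phi$ is linear in its arguments with coefficients depending only on $m,b,C$ and can be partially reabsorbed by the reserves $\|\Hess u\|^2$ and $(h-w)^2$ via Cauchy-Schwarz.

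The final step is the standard maximum-principle cutoff argument. I would choose a cutoff $\eta$ supported in $B(x,R)$ with $\eta\equiv 1$ on $B(x,R/2)$ and with $\|\nabla\eta\|^2/\eta$ and $|\Delta\eta|$ bounded by a constant depending only on $m,b,\rho$; such $\eta$ is furnished by \cref{CC} (after scaling to accommodate $R\ge\rho$), or equivalently via Laplacian comparison applied to a smooth cut of the distance function. At an interior maximum $p_0$ of $G:=\eta^2 w$, the relations $\nabla G(p_0)=0$ and $\Delta G(p_0)\le 0$ yield $\nabla w(p_0)=-2w\,\nabla\eta/\eta$ at $p_0$, and inserting these into the Bochner inequality above produces a polynomial inequality of the form $A\,G(p_0)^2\le B\,G(p_0)+D$, with $A,B,D$ positive and depending only on $m,b,\rho,C$. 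This forces $G(p_0)\le c$, and since $\eta\equiv 1$ on $B(x,R/2)$ the desired bound $\|\nabla u\|^2\le c$ on that ball follows.

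The main technical obstacle will be the bookkeeping around $h\ne 0$. In the harmonic case the term $\langle\nabla u,\nabla\Delta u\rangle$ reduces to $-\langle\nabla u,\nabla w\rangle$, which combines cleanly with the cutoff-derivative produced by $\nabla G(p_0)=0$; here the additional contribution $\langle\nabla u,\nabla h\rangle$ couples $\|\nabla u\|$ with $\|\Hess u\|$ and with $h$, and the delicate point is to split these using Cauchy-Schwarz so that both the $(h-w)^2/m$ and the $\|\Hess u\|^2$ reserves can swallow the unwanted cross-terms while leaving a strictly positive $w^2$-coefficient for the quadratic maximum-point analysis to close. This is precisely the purpose of the structural assumption $\|\nabla(\Delta f)\|\le C\|\Hess f\|+C\|\nabla f\|$: it is exactly the homogeneity that keeps $\|\nabla h\|$ from contributing terms that would overwhelm the $w^2$ reserve.
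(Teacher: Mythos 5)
The paper does not prove this statement; it cites it as a special case of Cheng--Yau's gradient estimate \cite[Theorem 6]{ChengYau}, so there is no paper argument to compare against. Your method (logarithmic substitution, Bochner identity, cutoff maximum principle) is the standard and correct one, but there is a sign inconsistency that breaks precisely the absorption step you flag as delicate. The paper uses the geometer's Laplacian ($\sigma(\Delta)\subseteq[0,\infty)$, $\Delta=-\mathrm{div}\,\nabla$), so $\Delta f\ge 0$ means $f$ is superharmonic. Your formula $\Delta u=h-w$ and your Bochner identity (with $+\|\Hess u\|^2$ and $+\Ric(\nabla u,\nabla u)$ on the right) are both in the analyst's convention $\Delta=\mathrm{div}\,\nabla$, in which the paper's hypothesis reads $h=\Delta f/f\le 0$, not $h\ge 0$ as you write.

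This sign is the whole game. Expanding $\nabla h=\nabla(\Delta f)/f-h\nabla u$ gives the exact identity $\langle\nabla u,\nabla h\rangle=\langle\nabla u,\nabla(\Delta f)\rangle/f-hw$. With the consistent sign, $-hw\ge 0$ and $(\Delta u)^2=(h-w)^2\ge w^2$ automatically, so the $w^2$-reserve survives for free: then your outline does close---use half of $\|\Hess u\|^2$ and AM--GM to absorb the $\|\Hess u\|$ term coming from $\|\nabla(\Delta f)\|\le C\|\Hess f\|+C\|\nabla f\|$, keep $(\Delta u)^2/m\ge w^2/m$, and the maximum-point algebra for $W=\eta^2 w$ gives $W^2\lesssim W^{3/2}+W$. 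With your signs ($h\ge 0$, $\Delta u=h-w$), however, the combined quantity $(h-w)^2/m-hw$ is a quadratic in $h$ whose minimum, at $h=(m+2)w/2$, equals $\bigl(4-(m+2)^2\bigr)w^2/(4m)<0$ and is comparable to $-mw^2$; no Cauchy--Schwarz on the remaining terms can restore a positive $w^2$-coefficient, because the structural hypothesis controls $\nabla(\Delta f)$ but places no bound on $h$ itself. What tames the $hw$ cross-term is exact sign cancellation coming from superharmonicity, not Cauchy--Schwarz, and the write-up should say so; as it stands, the claim that ``the $(h-w)^2/m$ and $\|\Hess u\|^2$ reserves can swallow the unwanted cross-terms'' is false for the stated sign of $h$.
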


\subsection{Conventions}
Unless specified otherwise, Riemannian manifolds $M^m$ are assumed to be complete and connected with Ricci curvature bounded from below, $\Ric_M \geq -(m-1)b^2$ for some $b > 0$.
Moreover, $\hat p\colon\hat M\to M$ denotes an infinite normal Riemannian covering with group $\Gamma$ of deck transformations and $p\colon M'\to M$ a finite intermediate Riemannian covering with $|p|$ sheets.

\section{End parametrix}\label{secparex}

Throughout this section, let $M^m$ be a non-compact (complete and connected) Riemannian manifold with $\Ric_M \geq -(m-1)b^2$ and $\lambda_{\ess}(M) > 0$, and fix $0 < \Lambda < \lambda_{\ess} (M)$ and $\delta > 0$.
By \cref{lambdae}, there exists a compact domain $K\subseteq M$ with smooth boundary such that
\[
\lambda_0(M \setminus K^\circ) \geq \Lambda + \frac{\lambda_{\ess}(M) - \Lambda}{2},
\]
where $\lambda_0(M \setminus K^\circ)$ stands for the bottom of the Dirichlet spectrum of $M \setminus K^\circ$.
Choose $x \in K$ and $R_0 > b/2$ such that $K \subset B(x,R_0)$. It follows from \cref{CC} that there exists $R>R_0$ and smooth $\psi \colon M \to [0,1]$ supported in $B(x,2R)$, with $\psi = 1$ in $B(x,R)$ such that
\[\| \nabla \psi \|_\infty,\| \Delta \psi \|_\infty < \alpha \delta, \text{ where } \alpha = \frac{\lambda_{\ess}(M) - \Lambda}{3(\lambda_{\ess}(M) + \Lambda + 2)}.
\]
Then $\chi_K^+ = 1 - \psi$ is smooth, vanishes in a neighborhood of $K$ and satisfies $\| \nabla \chi_K^+ \|_\infty,\| \Delta \chi_K^+ \|_\infty < \alpha \delta$. Choose also a smooth $\chi_K^- \colon M \to [0,1]$ such that $\chi_K^- = 1$ in $M \setminus B(x,3R)$ and $\chi_K^- = 0$ in $B(x,2R)$. It is clear that $\chi_K^+ \chi_K^- = \chi_K^-$ and $\chi_K^-$ vanishes in a neighborhood of $K$.

Given a finite-sheeted covering $p_\vf \colon M_\vf \to M$, consider the lifted functions $\chi_{K,\vf}^{\pm} = \chi_K^{\pm} \circ p_\vf$, and observe that $\| \nabla \chi^+_{K,\vf} \|_{\infty} , \| \Delta \chi^+_{K,\vf}  \|_{\infty} < \alpha \delta$. Denote by 
\[
\Delta_D \colon \mathcal{D}(\Delta_D) \subset L^2(M_\vf \setminus p^{-1}_\vf(K^\circ)) \to L^2(M_\vf \setminus p^{-1}_\vf(K^\circ))
\]
the Dirichlet Laplacian. The fact that $p_\vf$ is finite-sheeted implies that $\lambda_0(M_\vf \setminus p_\vf^{-1}(K^\circ)) = \lambda_0(M \setminus K^\circ)$. Therefore, for any $0 \leq \lambda \leq \Lambda$, the resolvent 
\[
R_{D}(\lambda) = (\Delta_D - \lambda)^{-1} \colon  L^2(M_\vf \setminus p^{-1}(K^\circ)) \to \mathcal{D}(\Delta_D)
\]
exists and its norm as an operator to $L^2(M_\vf \setminus p^{-1}_\vf(K^\circ))$ satisfies
\[
\| R_{D}(\lambda)  \|_{L^2} = \frac{1}{d(\lambda, \sigma(\Delta_D))} \leq \frac{2}{\lambda_{\ess}(M) - \Lambda}.
\]
It follows from
\[
\Delta_D R_D(\lambda) f = f + \lambda R_D(\lambda) f 
\]
that
\[
\| \Delta_D R_D(\lambda) f \|_{L^2} \leq \big( 1 + \frac{2 \Lambda}{\lambda_{\ess}(M) - \Lambda} \big) \| f \|_{L^2}
\]
for any $f \in L^2(M_\vf \setminus p_\vf^{-1}(K^\circ))$. Therefore, regarding $\mathcal{D}(\Delta_D)$ endowed with the graph norm
\[
\| f \|_{\Delta_D} = \| f \|_{L^2} + \| \Delta_D f \|_{L^2},
\]
the resolvent is bounded with 
\begin{equation}\label{res norm}
\| R_D(\lambda) \|_{\Delta_D} \leq \frac{\lambda_{\ess}(M) + \Lambda + 2}{\lambda_{\ess}(M) - \Lambda} = \frac{1}{3 \alpha}. 
\end{equation}

Bearing in mind that $\chi^+_{K,\vf}$ has bounded gradient and Laplacian, and vanishes in a neighborhood of $p^{-1}_\vf(K)$, it is easily checked that 
\begin{equation}\label{Dir Lap}
	\Delta(\chi^+_{K,\vf} f) = \Delta_D(\chi^+_{K,\vf} f)
\end{equation}
for any $f \in \mathcal{D}(\Delta_D)$, and the operator $\chi^+_{K,\vf} \colon \mathcal{D}(\Delta_D) \to \mathcal{D}(\Delta)$ is bounded, where $\mathcal{D}(\Delta)$ stands for the domain of the Laplacian on $M_\vf$ and both spaces are endowed with the corresponding graph norms. Combining the above, we deduce that the operator 
\[
\mathbb{M}_\vf^K(\lambda) = \chi^+_{K,\vf} R_D(\lambda) \chi^-_{K,\vf} \colon L^2(M_\vf) \to \mathcal{D}(\Delta)
\]
is bounded for any $0 \leq \lambda \leq \Lambda$.

It is straightforward to verify that $R_{D}(\lambda) (f \circ p_{\vf}) = (R_{D,K}(\lambda) f) \circ p_{\vf}$ for any $f \in L^2(M \setminus K^\circ)$, where $R_{D,K}(\lambda)$ stands for the resolvent of the Dirichlet Laplacian on $M \setminus K^\circ$ at $\lambda$. Using this, together with the fact that $\chi_K^{\pm}$ are lifted functions from $M$, it is easily checked that
\[
\langle \mathbb{M}_\vf^K(\lambda) f , g \rangle_{L^2} = 0
\]
for any $f \in L^2_{\textrm{new}}(M_\vf)$ and $g \in L^2_\textrm{old}(M_\vf)$. Since $L^2(M_\vf)$ is the orthogonal sum of $L^2_{\textrm{new}}(M_\vf)$ and $L^2_{\textrm{old}}(M_\vf)$, this means that
\[
\mathbb{M}_\vf^K (L^2_{\textrm{new}}(M_\vf)) \subset L^2_{\textrm{new}}(M_\vf) \cap \mathcal{D}(\Delta).
\]

Consider now the operator
\[
\mathbb{L}_\vf^K(\lambda) = [\Delta_D , \chi^+_{K,\vf}] R_D(\lambda) \chi^-_{K,\vf} \colon L^2(M_\vf) \to L^2(M_\vf)
\]
with $0 \leq \lambda \leq \Lambda$. Bearing in mind (\ref{Dir Lap}), we compute
\begin{eqnarray}
	(\Delta - \lambda) \mathbb{M}_\vf^K (\lambda) f - \mathbb{L}_\vf^K(\lambda)f &=& (\Delta_D - \lambda) \mathbb{M}_\vf^K (\lambda) f - \mathbb{L}_\vf^K(\lambda)f \nonumber\\
	& =& \chi^+_{K,\vf} \chi^{-}_{K,\vf} f = \chi^{-}_{K,\vf} f\nonumber
\end{eqnarray}
for any $f \in L^2(M_\vf)$, where we used that $\chi_K^+ \chi_K^- = \chi_K^-$. This yields that
\begin{equation}\label{end relation}
	(\Delta - \lambda) \mathbb{M}_\vf^K(\lambda) = \chi^-_{K,\vf} + \mathbb{L}_\vf^K(\lambda)
\end{equation}
for any $0 \leq \lambda \leq \Lambda$. Since $\mathbb{M}_\vf$ maps $L^2_{\textrm{new}}(M_\vf)$ to itself, and $\Delta$ maps $L^2_{\textrm{new}}(M_\vf) \cap \mathcal{D}(\Delta)$ to $L^2_{\textrm{new}}(M_\vf)$, we deduce that
\[
\mathbb{L}_\vf^K(\lambda)(L^2_{\textrm{new}}(M_\vf)) \subset L^2_{\textrm{new}}(M_\vf).
\]

\begin{lem}\label{estl}
The operator $\mathbb{L}_\vf^K(\lambda)$ is bounded with norm
\[
\| \mathbb{L}_\vf^K(\lambda) \|_{L^2} \leq \delta
\]
for any $0 \leq \lambda \leq \Lambda$.
\end{lem}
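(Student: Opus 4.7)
The plan is to expand the commutator explicitly and reduce to the sup-norm estimates on $\chi^+_{K,\vf}$ together with the resolvent bound \eqref{res norm}. For $u \in \mathcal{D}(\Delta_D)$, the Leibniz rule for the Laplacian gives
\[
[\Delta_D, \chi^+_{K,\vf}]u = (\Delta \chi^+_{K,\vf}) u - 2 \langle \nabla \chi^+_{K,\vf}, \nabla u \rangle,
\]
where \eqref{Dir Lap} is used implicitly to pass between $\Delta$ and $\Delta_D$. Applied to $u = R_D(\lambda) \chi^-_{K,\vf} f$ and combined with $\|\nabla \chi^+_{K,\vf}\|_\infty , \|\Delta \chi^+_{K,\vf}\|_\infty < \alpha \delta$, this yields
\[
\| \mathbb{L}_\vf^K(\lambda) f \|_{L^2} \leq \alpha \delta \bigl( \|u\|_{L^2} + 2 \|\nabla u\|_{L^2} \bigr).
\]

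Next I would control $\|\nabla u\|_{L^2}$ by the graph norm of $u$. Since $u \in \mathcal{D}(\Delta_D)$ vanishes (in the trace sense) on $\partial (M_\vf \setminus p_\vf^{-1}(K^\circ))$, integration by parts is justified and gives
\[
\|\nabla u\|_{L^2}^2 = \langle \Delta_D u, u \rangle_{L^2} \leq \|\Delta_D u\|_{L^2} \|u\|_{L^2}.
\]
By AM-GM, $2 \|\nabla u\|_{L^2} \leq \|u\|_{L^2} + \|\Delta_D u\|_{L^2}$, and therefore
\[
\|u\|_{L^2} + 2 \|\nabla u\|_{L^2} \leq 2 \|u\|_{L^2} + \|\Delta_D u\|_{L^2} \leq 2 \|u\|_{\Delta_D}.
\]

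Finally, since $\|\chi^-_{K,\vf}\|_\infty \leq 1$, the resolvent estimate \eqref{res norm} gives
\[
\|u\|_{\Delta_D} \leq \| R_D(\lambda) \|_{\Delta_D} \|\chi^-_{K,\vf} f\|_{L^2} \leq \frac{1}{3 \alpha} \|f\|_{L^2}.
\]
Assembling the three bounds produces
\[
\| \mathbb{L}_\vf^K(\lambda) f \|_{L^2} \leq 2 \alpha \delta \cdot \frac{1}{3 \alpha} \|f\|_{L^2} = \frac{2 \delta}{3} \|f\|_{L^2} \leq \delta \|f\|_{L^2},
\]
which is the desired estimate. There is no substantive obstacle here: the calibration of $\alpha$ in the definition of the cutoffs was designed precisely so that the factor $1/(3\alpha)$ from the resolvent bound absorbs the factor $2$ coming from the commutator expansion. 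The only point requiring minor care is justifying the boundary-free integration by parts, which follows from $u$ lying in the domain of the Dirichlet Laplacian.
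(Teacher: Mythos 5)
Your proof is correct and follows essentially the same approach as the paper: expand the commutator, control $\|\nabla u\|_{L^2}$ via the Dirichlet form identity $\|\nabla u\|^2_{L^2}=\langle \Delta_D u,u\rangle_{L^2}$ and Cauchy–Schwarz, and combine with the graph-norm resolvent bound \eqref{res norm}; the calibration of $\alpha$ then closes the estimate. The only cosmetic difference is bookkeeping: you apply AM-GM to get $\|u\|_{L^2}+2\|\nabla u\|_{L^2}\le 2\|u\|_{\Delta_D}$ (yielding $2\delta/3$), whereas the paper bounds $\|f\|_{L^2}$ and $\|\nabla f\|_{L^2}$ each by $\|f\|_{\Delta_D}$ to get the slightly cruder $3\alpha\delta\|f\|_{\Delta_D}$ (yielding exactly $\delta$); both land within the target.
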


\begin{proof}
To estimate the norm of $[\Delta_D , \chi^+_{K,\vf}] \colon \mathcal{D}(\Delta_D) \to L^2(M_\vf)$, we calculate
\[
[\Delta_D , \chi^+_{K,\vf}] f = (\Delta \chi^+_{K,\vf}) f - 2 \langle \nabla \chi^+_{K,\vf} , \nabla f \rangle,
\]
which implies that
\[
\| [\Delta_D , \chi^+_{K,\vf}] f \|_{L^2} \leq \| \Delta \chi^+_{K,\vf} \|_{\infty} \|f\|_{L^2} + 2 \| \nabla \chi^+_{K,\vf} \|_{\infty} \| \nabla f \|_{L^2}
\]
for any $f \in \mathcal{D}(\Delta_D)$.
Keeping in mind that  $\| \nabla \chi_{K,\vf}^+ \|_{\infty} , \| \Delta \chi_{K,\vf}^+ \|_{\infty} < \alpha \delta$, and
\[
\| \nabla f \|_{L^2} = \langle \Delta_D f , f \rangle_{L^2}^{1/2} \leq \| \Delta_D f \|_{L^2}^{1/2} \| f \|_{L^2}^{1/2} \leq \| f \|_{\Delta_D},
\]
we derive that
\[
	\| [\Delta_D , \chi^+_{K,\vf}] \||_{L^2} \leq \| \Delta \chi^+_{K,\vf} \|_{\infty} + 2 \| \nabla \chi^+_{K,\vf} \|_{\infty} < 3 \alpha \delta.
\]
Using this, (\ref{res norm}) and the fact that $0 \leq  \chi_K^-  \leq 1$, we conclude that
\[
\| \mathbb{L}_{\vf}^K(\lambda) g \|_{L^2} \leq 3 \alpha \delta \|  R_D(\lambda) (\chi^-_{K,\vf} g)  \|_{\Delta_D}  \leq \delta \| \chi^-_{K,\vf} g \|_{L^2} \leq  \delta \|  g \|_{L^2}
\]
for any $g \in L^2(M_\vf)$ and $ 0 \leq \lambda \leq \Lambda$.
\end{proof}

Recalling that $0 < \Lambda < \lambda_{\ess}(M)$, $\delta > 0$, and the finite-sheeted covering $p_\vf$ considered throughout this section are arbitrary, we obtain the following:

\begin{prop}\label{estl2}
For any $0 < \Lambda < \lambda_{\ess}(M)$ and $\delta > 0$, we can choose $K$ and $\chi^{\pm}_K$ as above, such that
\[
\| \mathbb{L}_\vf^K(\lambda) \|_{L^2} \leq \delta
\]
for any $n \in \mathbb{N}$, $\vf \in \Hom(\Gamma,S_n)$ and $0 \leq \lambda \leq \Lambda$.
\end{prop}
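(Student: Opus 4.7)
The plan is to observe that Proposition \ref{estl2} is simply the uniform (in $\vf$ and $n$) repackaging of Lemma \ref{estl}, and that the construction carried out at the start of the section was already arranged to achieve this uniformity. So the proof is essentially a bookkeeping argument tracing where $\vf$ enters (or fails to enter) each choice.

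First, I would note that the compact domain $K \subset M$ is chosen via Theorem \ref{lambdae} purely in terms of the base manifold $M$ and the target threshold $\Lambda + (\lambda_{\ess}(M)-\Lambda)/2$; in particular, $K$ is independent of any covering. The cutoff $\psi$ (and hence $\chi_K^+ = 1-\psi$) produced by the Cheeger--Colding result \ref{CC} is built on $M$ using parameters depending only on $m$, $b$, $\Lambda$, and $\delta$, and $\chi_K^-$ is constructed on $M$ directly. Thus $K$, $\chi_K^+$, $\chi_K^-$ are fixed once and for all, independent of $(n,\vf)$.

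Next, I would verify that all quantitative inputs to Lemma \ref{estl} lift uniformly. Since $p_\vf$ is a local isometry, $\|\nabla \chi^+_{K,\vf}\|_\infty = \|\nabla \chi^+_K\|_\infty < \alpha\delta$ and $\|\Delta \chi^+_{K,\vf}\|_\infty = \|\Delta \chi^+_K\|_\infty < \alpha\delta$ for every $\vf$. Moreover, because $p_\vf$ is finite-sheeted, one has $\lambda_0(M_\vf \setminus p^{-1}_\vf(K^\circ)) = \lambda_0(M \setminus K^\circ)$, so the resolvent-norm bound \eqref{res norm} holds with the same constant $1/(3\alpha)$ for every $\vf$. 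Consequently, the estimate on $\|[\Delta_D,\chi^+_{K,\vf}]\|$ derived in the proof of Lemma \ref{estl} is the same $3\alpha\delta$ for every $\vf$, and the final chain of inequalities yields $\|\mathbb{L}_\vf^K(\lambda)\|_{L^2} \leq \delta$ uniformly in $n \in \N$, $\vf \in \Hom(\Gamma,S_n)$, and $\lambda \in [0,\Lambda]$.

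There is no real obstacle here; the proposition is a harvesting of uniformity already built into the construction. The only point worth flagging carefully is that every upstream choice—$K$ from Theorem \ref{lambdae}, the cutoff $\psi$ from Theorem \ref{CC}, and the value of $\alpha$—was made intrinsically on $M$, and that the identity $\lambda_0(M_\vf \setminus p^{-1}_\vf(K^\circ)) = \lambda_0(M \setminus K^\circ)$ uses only that $p_\vf$ has finitely many sheets, not any property of $\vf$.
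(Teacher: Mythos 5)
Your proposal is correct and follows the same reasoning the paper uses (the paper gives no formal proof, only the remark that $K$, $\chi_K^{\pm}$, and the resolvent bound were all constructed on $M$ independently of the covering, so Lemma \ref{estl} applies uniformly). Your bookkeeping — that $K$ and $\chi_K^{\pm}$ live on $M$, that the lifted cutoffs inherit the same sup bounds because $p_\vf$ is a local isometry, and that the Dirichlet bottom is preserved under finite covers — is exactly the content the paper intends the reader to supply.
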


\section{Parametrix on $\hat M$}\label{secparin}

For any $\lambda < \lambda_0(\hat{M})$, the resolvent $R(\lambda) = (\Delta - \lambda)^{-1} \colon L^2(\hat{M}) \to L^2(\hat{M})$ is the integral operator with kernel
\begin{equation}\label{Green}
G_\lambda(x,y) = \int_0^{\infty} e^{\lambda t} p_t(x,y) dt,
\end{equation}
where $p_t(x,y)$ stands for the heat kernel of $\hat{M}$ (cf. \cite[Corollary 2.3]{Su87}.
Recall that $G_\lambda \in C^\infty(\hat{M} \times \hat{M} \setminus \{(x,x) \colon x \in \hat{M}\})$ and $G_\lambda(\cdot,y)$ is a $\lambda$-eigenfunction on $\hat{M} \setminus \{y\}$ for any $y \in \hat{M}$. Indeed, from the definition of the resolvent we get \[ (\Delta_x - \lambda) ~ G_\lambda (., y) = \delta_y \] where $\delta_y$ is the Dirac mass at $y$. Therefore, for $x \in \hat{M} \setminus y$, we have
\[
\Delta_x ~ G_\lambda(., y) = \lambda \cdot G_\lambda(., y).\]
It is easy to see that $G_\lambda(., y)$ satisfies the condition in \cref{CY} away from $y$.

Choose one point of the fiber of $\hat{p} \colon \hat M \to M$ over any point of $M$ and denote the set of these points by $W$.
Then for any $y \in \hat{M}$ there exist unique $x \in W$ and $\gamma \in \Gamma$ such that $y = \gamma x$.
It follows from \cref{CC} that for any $T \geq \max\{1,b/2\}$ and $y \in W$ there exists smooth $\chi_{T,y} \colon \hat{M} \to [0,1]$ supported in $B(y,2T)$ with $\chi_{T,y} = 1$ in $B(y,T)$ such that
\begin{equation}\label{bound}
\| \nabla \chi_{T,y} \|_\infty , \| \Delta \chi_{T,y} \|_\infty \leq \frac{c(m)}{T} ,
\end{equation}
where $c(m)$ is a constant depending only on $m$.
Given any $y \in \hat{M}$, define $\chi_{T,y} = \chi_{T,x} \circ \gamma^{-1}$, where $x \in W$ and $\gamma \in \Gamma$ are the unique elements such that $y = \gamma x$. Then (\ref{bound}) holds for any $y \in \hat{M}$ and $\chi_{T,\gamma y}(\gamma x) = \chi_{T,y}(x)$ for any $x,y \in \hat{M}$ and $\gamma \in \Gamma$.

Consider now the function $R_{T}(\lambda,\cdot,\cdot) \colon \hat{M} \times \hat{M} \smallsetminus \{(x,x) \colon x \in \hat{M}\} \to \mathbb{R}$ defined by
\[
	R_{T}(\lambda,x,y) = \chi_{T, y}(x) G_\lambda(x,y),
\]
for any $x,y \in \hat{M}$ with $x \neq y$, and the function $L_{T}(\lambda,\cdot,\cdot) \colon \hat{M} \times \hat{M} \to \mathbb{R}$ given by
\[
L_{T}(\lambda,x,y) = \Delta_x \chi_{T,y}(x) G_\lambda(x,y) -2 \langle \nabla_x \chi_{T,y}(x) , \nabla_x G_\lambda(x,y)  \rangle
\]
for any $x,y \in \hat{M}$.
Observe that
\begin{equation}\label{Gamma equiv}
	R_{T}(\lambda,\gamma x, \gamma y) = R_{T}(\lambda, x,  y) \text{, } 	L_{T}(\lambda,\gamma x, \gamma y) = L_{T}(\lambda, x,  y) 
\end{equation}
for any $x , y \in \hat{M}$ and $\gamma \in \Gamma$.
Since
\[
R_{T}(\lambda,x,y) \leq  G_{\lambda}(x,y)
\]
for any $x \neq y$, we obtain that $R_{T}(\lambda,x,y)$ is the kernel of a bounded integral operator $R_{T}(\lambda) \colon L^2(\hat{M}) \to L^2(\hat{M})$ with 
\begin{equation}\label{R norm}
    \| R_{T}(\lambda) \| \leq \| R(\lambda) \|.
\end{equation}
Regarding $L_{T}(\lambda,x,y)$, we have the following:

\begin{prop}\label{L estimate}
There exists $C>0$ independent from $\lambda$ and $T$, such that
\[
|L_{T}(\lambda,x,y)| \leq \frac{C}{T} G_{\lambda}(x,y)
\]
for any $x,y \in \hat{M}$, $0 \leq \lambda < \lambda_0(\hat{M})$ and $T \geq \max\{1,b/2\}$. Moreover, $L_{T}(\lambda,x,y)$ is the kernel of a bounded operator $\mathbb{L}_{T}(\lambda) \colon L^2(\hat{M}) \to L^2(\hat{M})$ with
\[
\| \mathbb{L}_{T}(\lambda) \| \leq \frac{C}{T(\lambda_0(\hat{M}) - \lambda)}
\]
for any $0 \leq \lambda <\lambda_0(\hat{M})$ and $T \geq \max\{1,b/2\}$.
\end{prop}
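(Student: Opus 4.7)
The plan is to establish the pointwise estimate first and then derive the operator-norm bound by comparison with the resolvent $R(\lambda)$. From the definition of $L_T$ and the triangle inequality,
\[
|L_T(\lambda, x, y)| \leq \|\Delta_x \chi_{T,y}\|_\infty\, G_\lambda(x, y) + 2 \|\nabla_x \chi_{T,y}\|_\infty\, \|\nabla_x G_\lambda(x, y)\|,
\]
and by (\ref{bound}) both cutoff-derivative norms are at most $c(m)/T$. The only nontrivial task is to absorb $\|\nabla_x G_\lambda(x, y)\|$ into a multiple of $G_\lambda(x, y)$ with a constant independent of $\lambda$, $T$, $x$ and $y$.

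To do this, I would fix $y$ and observe that $\nabla_x \chi_{T,y}$ and $\Delta_x \chi_{T,y}$ are supported in the annulus $B(y, 2T) \setminus B(y, T)$, so the pointwise bound only needs to hold at points $x$ with $d(x, y) \geq T \geq 1$. For such $x$, the function $f = G_\lambda(\cdot, y)$ is smooth and positive on $B(x, 1/2)$, satisfies $\Delta f = \lambda f \geq 0$, and satisfies $\|\nabla(\Delta f)\| = \lambda\, \|\nabla f\| \leq \lambda_0(\hat M)\, \|\nabla f\|$. Thus the hypotheses of \cref{CY} hold with $R = \rho = 1/2$ and differential-inequality constant $\lambda_0(\hat M)$, producing a constant $c' > 0$ depending only on $m$, $b$, and $\lambda_0(\hat M)$ (in particular independent of $\lambda$, $T$, $x$, $y$) such that $\|\nabla_x G_\lambda(x, y)\| \leq c'\, G_\lambda(x, y)$ throughout the annulus. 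Combining this with the cutoff bounds yields the asserted pointwise inequality with $C = c(m)(1 + 2c')$.

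The operator-norm bound is then a soft consequence. Since $G_\lambda \geq 0$ is the kernel of $R(\lambda)$, the pointwise estimate gives $|(\mathbb{L}_T(\lambda) f)(x)| \leq (C/T)\, (R(\lambda) |f|)(x)$ by monotonicity, whence
\[
\|\mathbb{L}_T(\lambda)\| \leq \frac{C}{T}\, \|R(\lambda)\| = \frac{C}{T(\lambda_0(\hat M) - \lambda)},
\]
the last identity using that $R(\lambda)$ is self-adjoint with spectrum contained in $\bigl[0, (\lambda_0(\hat M) - \lambda)^{-1}\bigr]$.

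The main subtlety I anticipate is the uniformity of the Cheng-Yau constant: it is crucial that $\rho$ be fixed in advance (any positive number at most $T$ works; the choice $\rho = 1/2$ is convenient because $T \geq 1$ guarantees $B(x, 1/2) \subset \hat M \setminus \{y\}$, which is where $G_\lambda(\cdot, y)$ is smooth), and that $\lambda_0(\hat M)$ serves as a $\lambda$-uniform bound in the differential inequality for $\Delta G_\lambda$. These two observations together decouple the constant $c'$ from all of $\lambda$, $T$, $x$, $y$.
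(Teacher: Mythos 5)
Your proof is correct and follows essentially the same route as the paper's: apply the Cheng--Yau estimate to the positive $\lambda$-eigenfunction $G_\lambda(\cdot,y)$ away from the diagonal to obtain $\|\nabla_x G_\lambda\|\le c'\,G_\lambda$ with a constant depending only on $m$, $b$, $\lambda_0(\hat M)$, combine with the cutoff bounds \eqref{bound} for the pointwise estimate, and then dominate $|\mathbb{L}_T(\lambda)f|$ by $(C/T)R(\lambda)|f|$ to read off the operator norm from $\|R(\lambda)\|=(\lambda_0(\hat M)-\lambda)^{-1}$. The only differences are cosmetic: you spell out the verification of the Cheng--Yau hypothesis ($\nabla(\Delta f)=\lambda\nabla f$ with constant $\lambda_0(\hat M)$), which the paper relegates to a remark in Section~\ref{secparin}, and you restrict to $d(x,y)\ge T$ rather than $d(x,y)\ge 1/2$, both of which are valid since the cutoff derivatives vanish inside $B(y,T)$.
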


\begin{proof}
Since $G_\lambda(\cdot , y)$ is positive and a $\lambda$-eigenfunction on $\hat{M} \setminus \{y\}$, it follows from \cref{CY} that there exists $c > 0$ depending only on $m$, $b$ and $\lambda_0(\hat{M})$ such that
\[
\| \nabla_x G_\lambda(x,y) \| \leq c  G_\lambda(x,y)
\]
for any $\lambda \in [0, \lambda_0(\hat{M}))$ and $x,y \in \hat{M}$ with $d(x,y) \geq 1/2$. This, together with (\ref{bound}) and the fact that $\chi_{T,y}$ is constant in the ball of radius $T > 1/2$ about $y$, gives the estimate
\[
|\langle \nabla_x \chi_{T,y}(x) , \nabla_x G_\lambda(x,y)  \rangle| \leq \frac{c(m) c}{T} G_\lambda(x,y)
\]
for any $x,y \in \hat{M}$. Using again (\ref{bound}), we derive that there exists $C > 0$ depending only on $m, b$ and $\lambda_0(\hat{M})$ such that
\[
|L_{T}(\lambda,x,y)| \leq \frac{C}{T} G_\lambda(x,y)
\]
for any $x,y \in \hat{M}$. This gives the estimate
\[
\big|\int_{\hat{M}}L_{T}(\lambda,x,y)f(y) dy \big| \leq \int_{\hat{M}} |L_{T}(\lambda,x,y) f(y)| dy \leq \frac{C}{T} R(\lambda)|f|(x)
\]
for any $f \in L^2(\hat{M})$ and almost any $x \in \hat{M}$.
This yields that the kernel $L_{T}(\lambda,x,y)$ defines an integral operator $\mathbb{L}_{T}(\lambda)$ on $L^2(\hat{M})$ satisfying
\[
\| \mathbb{L}_{T}(\lambda)f \|_{L^2} \leq \frac{C}{T} \| R(\lambda)|f| \|_{L^2} \leq \frac{C}{T} \| R(\lambda) \| \| f \|_{L^2}
\]
for any $f \in L^2(\hat{M})$. The proof is completed from the fact that $\| R(\lambda) \| = (\lambda_0(\hat{M}) - \lambda)^{-1}$.
\end{proof}

\begin{prop}\label{R est}
For any $0 \leq \lambda < \lambda_0(\hat{M})$ and $T \geq \max\{1 , b/2\}$, we have that $R_{T}(\lambda) f \in \mathcal{D}(\Delta)$ and
\[
(\Delta - \lambda) R_{T}(\lambda) f = f + \mathbb{L}_{T}(\lambda)f
\]
for any $f \in L^2(\hat{M})$.
\end{prop}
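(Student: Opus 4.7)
The plan is to reduce the identity to the familiar one for the full resolvent $R(\lambda)$ by decomposing
\[
R_T(\lambda) = R(\lambda) - \tilde R_T(\lambda),
\]
where $\tilde R_T(\lambda)$ is the integral operator with kernel $(1-\chi_{T,y}(x))G_\lambda(x,y)$. The point of this reduction is that, although $G_\lambda$ is singular on the diagonal, the factor $1-\chi_{T,y}(x)$ vanishes on a neighborhood of $\{x=y\}$, so the kernel of $\tilde R_T(\lambda)$ is a \emph{smooth} function on $\hat M\times\hat M$. All the delta-function analysis is therefore absorbed into $R(\lambda)$, for which we already know that $(\Delta-\lambda)R(\lambda)f=f$ on $\mathcal{D}(\Delta)$.

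First I would treat $f\in C_c^\infty(\hat M)$. Since the kernel of $\tilde R_T(\lambda)$ is smooth and $\supp f$ is compact, standard differentiation-under-the-integral arguments give
\[
(\Delta-\lambda)\tilde R_T(\lambda) f(x) = \int_{\hat M} (\Delta_x - \lambda)\bigl[(1-\chi_{T,y}(x)) G_\lambda(x,y)\bigr] f(y)\,dy.
\]
For $x\neq y$ the function $G_\lambda(\cdot,y)$ is a $\lambda$-eigenfunction, so $(\Delta_x-\lambda)G_\lambda(\cdot,y)=0$ there; expanding the product rule then yields
\[
(\Delta_x-\lambda)\bigl[(1-\chi_{T,y}(x))G_\lambda(x,y)\bigr] = -L_T(\lambda,x,y)
\]
for $x\neq y$, and the identity extends to $x=y$ because $1-\chi_{T,y}$ vanishes in a neighborhood of $y$ (and $L_T(\lambda,y,y)=0$). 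Hence $(\Delta-\lambda)\tilde R_T(\lambda) f = -\mathbb L_T(\lambda) f$, and combining this with $(\Delta-\lambda)R(\lambda)f=f$ gives
\[
(\Delta-\lambda)R_T(\lambda) f = f + \mathbb L_T(\lambda) f
\]
for $f\in C_c^\infty(\hat M)$. In particular, $\Delta R_T(\lambda) f = \lambda R_T(\lambda) f + f + \mathbb L_T(\lambda) f \in L^2(\hat M)$, so $R_T(\lambda) f\in\mathcal{D}(\Delta)$.

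Finally I would extend to arbitrary $f\in L^2(\hat M)$ by density and closedness of $\Delta$. Pick $f_n\in C_c^\infty(\hat M)$ with $f_n\to f$ in $L^2$. By the $L^2$-boundedness already proved in \eqref{R norm} and \cref{L estimate}, both $R_T(\lambda) f_n\to R_T(\lambda) f$ and $\mathbb L_T(\lambda) f_n\to\mathbb L_T(\lambda)f$ in $L^2$; hence $\Delta R_T(\lambda) f_n=\lambda R_T(\lambda) f_n+f_n+\mathbb L_T(\lambda) f_n$ converges in $L^2$ to $\lambda R_T(\lambda) f+f+\mathbb L_T(\lambda) f$. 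Since $\Delta$ is closed, $R_T(\lambda) f\in\mathcal D(\Delta)$ and the claimed identity holds.

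The only real obstacle is keeping the singularity of $G_\lambda$ under control when differentiating. The decomposition $R_T(\lambda)=R(\lambda)-\tilde R_T(\lambda)$ is precisely designed to sidestep this: all the distributional behavior lives in $R(\lambda)$, while $\tilde R_T(\lambda)$ is handled by elementary smooth-kernel calculus. A more direct computation would instead produce a distributional $\delta_y$ from $(\Delta_x-\lambda)[\chi_{T,y}(x)G_\lambda(x,y)]$, which is equivalent but requires approximating by removing balls around $y$ and passing to the limit using the standard $d(x,y)^{2-m}$ (or logarithmic) asymptotics of $G_\lambda$.
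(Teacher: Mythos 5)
Your proposal is correct and rests on the same decomposition as the paper's proof: the paper sets $K_y(x)=(\chi_{T,y}(x)-1)G_\lambda(x,y)$, which is the negative of your $\tilde R_T$-kernel, and both proofs exploit that this correction kernel is smooth in $x$ (the cutoff kills the diagonal singularity of $G_\lambda$) together with the product-rule identity $(\Delta_x-\lambda)\bigl[(1-\chi_{T,y}(x))G_\lambda(x,y)\bigr]=-L_T(\lambda,x,y)$. Where the two arguments genuinely diverge is in the rigorization. You first prove the pointwise identity for $f\in C^\infty_c(\hat M)$ by differentiating under the integral sign, then pass to general $f\in L^2(\hat M)$ via density and closedness of $\Delta$. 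The paper instead works with arbitrary $f\in L^2(\hat M)$ from the start: it pairs $(R_T(\lambda)-R(\lambda))f$ against $(\Delta-\lambda)g$ for $g\in C^\infty_c(\hat M)$, uses Tonelli--Fubini (justified by $|K_y(x)|\le G_\lambda(x,y)$ and Proposition~\ref{L estimate}) to swap the double integral, applies the divergence formula to move $(\Delta-\lambda)$ from $g$ onto the smooth $K_y$, and concludes via essential self-adjointness. The paper's route avoids the separate density step, and its integration by parts only needs $K_y(\cdot)\in C^\infty(\hat M)$ for each fixed $y$, which makes it insensitive to how the family $\{\chi_{T,y}\}_y$ depends on $y$.

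That last point is where your write-up has a small inaccuracy: you claim the kernel of $\tilde R_T(\lambda)$ is a smooth function on $\hat M\times\hat M$. It is not. The functions $\chi_{T,y}$ for $y\notin W$ are defined by $\Gamma$-equivariant extension from a set $W$ of arbitrarily chosen fiber representatives, so $y\mapsto\chi_{T,y}$ is in general not even continuous, only measurable. This does not break your argument, because all you actually use is (i) smoothness of $x\mapsto(1-\chi_{T,y}(x))G_\lambda(x,y)$ for each fixed $y$, and (ii) dominated bounds on the integrand and its first two $x$-derivatives, locally uniformly in $x$ for $y\in\supp f$, which follow from the Cheeger--Colding bound \eqref{bound}, the Cheng--Yau estimate (Theorem~\ref{CY}), and the identity $\Delta_x G_\lambda=\lambda G_\lambda$ off the diagonal; but the phrase ``smooth on $\hat M\times\hat M$'' should be replaced accordingly. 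You should also note explicitly that your final step — inferring $R_T(\lambda)f\in\mathcal D(\Delta)$ from $R_T(\lambda)f$ smooth, in $L^2$, with $\Delta R_T(\lambda)f\in L^2$ — relies on essential self-adjointness of $\Delta$ on $C^\infty_c(\hat M)$ (Gaffney), which is the same tool the paper invokes at the analogous stage.
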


\begin{proof}
For any $f \in L^2(\hat{M})$ and $g \in C^\infty_c(\hat{M})$, we compute
\[
\langle R_{T}(\lambda) - R(\lambda))f , (\Delta -\lambda) g \rangle_{L^2} = \int_{\hat{M}} \int_{\hat{M}}K_y(x) f(y) (\Delta - \lambda) g(x) dy dx,
\]
where $K_y(x) = (\chi_{T,y}(x) - 1) G_{\lambda}(x,y)$.
Since
\[
\int_{\hat{M}} \int_{\hat{M}} |K(x,y) f(y) (\Delta - \lambda) g(x)| dy dx \leq  \langle R(\lambda) |f| , |(\Delta - \lambda)g| \rangle_{L^2} < \infty,
\]
we obtain from Tonelli's and Fubini's theorems that
\[
\langle R_{T}(\lambda) - R(\lambda))f , (\Delta -\lambda) g \rangle_{L^2} = \int_{\hat{M}} \int_{\hat{M}} K_y(x) f(y) (\Delta - \lambda) g(x) dx dy.
\]
Since $\chi_{T,y} = 1$ near $y$ and $G_{\lambda}(\cdot,y)$ is a $\lambda$-eigenfunction away from $y$, and thus, $K_y \in C^\infty(\hat{M})$, using the divergence formula, we derive that
\[
\langle R_{T}(\lambda) - R(\lambda))f , (\Delta -\lambda) g \rangle_{L^2} = \int_{\hat{M}} \int_{\hat{M}} L_{T}(\lambda,x,y) f(y) g(x) dx dy.
\]
We know from Proposition \ref{L estimate} that there exists $C>0$ such that
\[
|L_{T}(\lambda,x,y)| \leq \frac{C}{T} G_{\lambda}(x,y)
\]
for any $x,y \in \hat{M}$. This gives that
\begin{eqnarray}
\int_{\hat{M}} \int_{\hat{M}} |L_{T}(\lambda,x,y) f(y) g(x)| dx dy &\leq& \frac{C}{T} \int_{\hat{M}} \int_{\hat{M}} G_{\lambda}(x,y) |f(y) g(x)| dx dy \nonumber \\
&=& \frac{C}{T} \langle |f| , R(\lambda) |g| \rangle_{L^2} < \infty. \nonumber
\end{eqnarray}
We deduce from Tonelli's and Fubini's theorems that
\begin{eqnarray}
\langle R_{T}(\lambda) - R(\lambda))f , (\Delta -\lambda) g \rangle_{L^2} &=& \int_{\hat{M}} \int_{\hat{M}} L_{T}(\lambda,x,y) f(y) g(x) dy dx \nonumber\\
&=& \langle \mathbb{L}_{T}(\lambda) f , g \rangle_{L^2}. \nonumber
\end{eqnarray}
Since this holds for any $g \in C^\infty_c(\hat{M})$, while $\Delta \colon C^\infty_c(\hat{M}) \subset L^2(\hat{M}) \to L^2(\hat{M})$ is essentially self-adjoint, we conclude that $(R_{T}(\lambda) - R(\lambda))f \in \mathcal{D}(\Delta)$ and 
\[
(\Delta - \lambda)(R_{T}(\lambda) - R(\lambda))f = \mathbb{L}_{T}(\lambda) f
\]
for any $f \in L^2(\hat{M})$.
\end{proof}

\section{Estimates for the kernels}\label{seckerest}

\begin{lem}\label{Green estimate}
For any compact $K \subset \hat{M}$ and $0 < \Lambda < \lambda_0(\hat{M})$, there exists $C(K,\Lambda) > 0$ such that 
\[
G_{\lambda_2}(x,y) - G_{\lambda_1}(x,y) \leq C(K,\Lambda)(\lambda_2 - \lambda_1)
\]
for any $0 \leq \lambda_1 < \lambda_2 \leq \Lambda$ and $x, y \in K$ with $d(x,y) \geq 1/2$.
\end{lem}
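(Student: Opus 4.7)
The plan is to use the heat-kernel representation \eqref{Green} of $G_\lambda$ together with an elementary estimate on the difference $e^{\lambda_2 t} - e^{\lambda_1 t}$, and then absorb the extra factor of $t$ by trading some of the exponential growth for a slightly larger spectral parameter $\lambda'\in(\Lambda,\lambda_0(\hat M))$.

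First, I would write
\[
G_{\lambda_2}(x,y) - G_{\lambda_1}(x,y) = \int_0^\infty \bigl(e^{\lambda_2 t} - e^{\lambda_1 t}\bigr)\, p_t(x,y)\, dt,
\]
which is legitimate by \eqref{Green} and the positivity of $p_t$. The mean value theorem gives
\[
e^{\lambda_2 t} - e^{\lambda_1 t} \le (\lambda_2 - \lambda_1)\, t\, e^{\lambda_2 t},
\]
so
\[
G_{\lambda_2}(x,y) - G_{\lambda_1}(x,y) \le (\lambda_2 - \lambda_1) \int_0^\infty t\, e^{\lambda_2 t}\, p_t(x,y)\, dt.
\]

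Next, I would fix once and for all some $\lambda'\in(\Lambda,\lambda_0(\hat M))$. Writing $t e^{\lambda_2 t} = t e^{-(\lambda'-\lambda_2)t}\, e^{\lambda' t}$ and using the elementary bound $\sup_{t\ge 0} t e^{-at} = 1/(e a)$ for $a>0$ (applied with $a=\lambda'-\lambda_2 \ge \lambda'-\Lambda$), I obtain
\[
t\, e^{\lambda_2 t} \le \frac{1}{e(\lambda'-\Lambda)}\, e^{\lambda' t}
\]
for every $t\ge 0$ and every $\lambda_2\in[0,\Lambda]$. Integrating against $p_t(x,y)$ and invoking \eqref{Green} for the parameter $\lambda'$, this yields
\[
G_{\lambda_2}(x,y) - G_{\lambda_1}(x,y) \le \frac{\lambda_2-\lambda_1}{e(\lambda'-\Lambda)}\, G_{\lambda'}(x,y).
\]

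Finally, the set $\{(x,y)\in K\times K : d(x,y)\ge 1/2\}$ is compact and disjoint from the diagonal; since $G_{\lambda'}$ is smooth off the diagonal, it is bounded above on this set by some constant $M(K,\lambda')$, and the lemma follows with
\[
C(K,\Lambda) = \frac{M(K,\lambda')}{e(\lambda'-\Lambda)}.
\]
There is no serious obstacle here: the only thing to watch is that one must stay strictly below $\lambda_0(\hat M)$ so that \eqref{Green} converges at $\lambda'$, and one must stay away from the diagonal so that $G_{\lambda'}$ is genuinely finite; both are guaranteed by the hypotheses $\Lambda<\lambda_0(\hat M)$ and $d(x,y)\ge 1/2$.
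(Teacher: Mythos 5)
Your proof is correct and takes essentially the same route as the paper: both write the difference of Green's functions via the heat-kernel formula, apply the mean value theorem to bound $e^{\lambda_2 t}-e^{\lambda_1 t}$, absorb the extra factor of $t$ into a slightly larger spectral parameter strictly below $\lambda_0(\hat M)$, and conclude by compactness and smoothness of the Green's function off the diagonal. The only cosmetic difference is the elementary inequality used to absorb the factor $t$ (the paper uses $\alpha t< e^{\alpha t}$ with $\alpha=(\lambda_0(\hat M)-\Lambda)/2$, you use $\sup_{t\ge0}te^{-at}=1/(ea)$).
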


\begin{proof}
We obtain from \ref{Green} that
\[
G_{\lambda_2}(x,y) - G_{\lambda_1}(x,y) = \int_{0}^{\infty} (e^{\lambda_2 t} - e^{\lambda_1 t}) p_t(x,y) dt
\]
for any $x \neq y$. For any $t > 0$ there exists $T(t) \in [\lambda_1 t,\lambda_2 t]$ such that
\[
e^{\lambda_2 t} - e^{\lambda_1 t} = (\lambda_2 t - \lambda_1 t) \frac{e^{\lambda_2 t} - e^{\lambda_1 t}}{\lambda_2 t - \lambda_1 t} = (\lambda_2 t - \lambda_1 t) e^{T(t)} \leq (\lambda_2 - \lambda_1)t e^{\Lambda t},
\]
which implies that
\[
G_{\lambda_2}(x,y) - G_{\lambda_1}(x,y) \leq (\lambda_2 - \lambda_1) \int_0^\infty t e^{\Lambda t} p_t(x,y) dt.
\]
For $\alpha = (\lambda_0(\hat{M}) - \Lambda)/2$, using that $\alpha t < e^{\alpha t}$ for any $t > 0$, we derive that
\[
G_{\lambda_2}(x,y) - G_{\lambda_1}(x,y) \leq \frac{\lambda_2 - \lambda_1}{\alpha} \int_0^\infty e^{(\Lambda + \alpha) t} p_t(x,y) dt =  \frac{\lambda_2 - \lambda_1}{\alpha} G_{\Lambda + \alpha}(x,y)
\]
for any $x \neq y$. The asserted bound follows from the compactness of $K$ and the smoothness of $G_{\Lambda + \alpha}$ away from the diagonal.
\end{proof}

\begin{prop}\label{grad Green estimate}
For any compact $K \subset \hat{M}$ and $0 < \Lambda < \lambda_0(\hat{M})$, there exists $C(K,\Lambda) > 0$ such that 
\[
\| \nabla_x G_{\lambda_2}(x,y) - \nabla_x G_{\lambda_1}(x,y)\| \leq C(K,\Lambda)(\lambda_2 - \lambda_1)
\]
for any $0 \leq \lambda_1 < \lambda_2 \leq \Lambda$ and $x, y \in K$ with $d(x,y) \geq 1/2$.
\end{prop}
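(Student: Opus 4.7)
The plan is to bootstrap \cref{Green estimate} to a gradient bound by exploiting the fact that the difference
\[
u(x) := G_{\lambda_2}(x, y) - G_{\lambda_1}(x, y)
\]
extends smoothly across $y$. Indeed, from $\Delta G_{\lambda_i}(\cdot, y) = \lambda_i G_{\lambda_i}(\cdot, y) + \delta_y$ the Dirac masses cancel in the difference, leaving the honest PDE
\[
\Delta u = \lambda_1 u + (\lambda_2 - \lambda_1) G_{\lambda_2}(\cdot, y) \quad \text{on } \hat{M},
\]
whose right-hand side is in $L^p_{\mathrm{loc}}$ near $y$; elliptic regularity then gives $u \in C^\infty(\hat{M})$. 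Moreover, $u \ge 0$ from \eqref{Green} together with the monotonicity $e^{\lambda_2 t} \ge e^{\lambda_1 t}$.

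Next, fix $x, y \in K$ with $d(x, y) \ge 1/2$, enlarge $K$ to a compact set $K' \supset K$ whose interior contains the $1/4$-neighborhood of $K$, and set $r = 1/8$. Every $z \in B(x, 2r)$ then satisfies $d(z, y) \ge 1/4$. A trivial modification of \cref{Green estimate}, where the threshold $1/2$ is replaced by $1/4$, yields $u \le C_1 (\lambda_2 - \lambda_1)$ on $B(x, 2r)$ with $C_1 = C_1(K', \Lambda)$. By the heat-kernel representation, $G_{\lambda_2} \le G_\Lambda$, and $G_\Lambda$ is smooth, hence uniformly bounded by some $C_2 = C_2(K', \Lambda)$ on $\{d(\cdot, y) \ge 1/4\} \cap K'$. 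Substituting both bounds into the PDE gives $\|\Delta u\|_{L^\infty(B(x, 2r))} \le C_3 (\lambda_2 - \lambda_1)$.

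The remaining step is to upgrade these $L^\infty$ bounds on $u$ and $\Delta u$ to an $L^\infty$ bound on $\nabla u$. A standard interior $C^1$ (Schauder) estimate for Poisson's equation, applied in geodesic normal coordinates on $B(x, 2r)$, gives
\[
\|\nabla u(x)\| \le C_4 \big(\|u\|_{L^\infty(B(x, 2r))} + \|\Delta u\|_{L^\infty(B(x, 2r))}\big) \le C_5 (\lambda_2 - \lambda_1),
\]
where $C_4$ depends only on the geometry of $K'$ (in particular on the injectivity radius, which is bounded below on $K'$ by continuity and compactness) and $C_5 = C_5(K, \Lambda)$.

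The main obstacle is pinning down this last step with a clean constant. If one prefers to avoid invoking Schauder on the manifold directly, the following alternative is closer in spirit to the rest of the paper: start from the resolvent identity $u(x) = (\lambda_2 - \lambda_1) \int_{\hat{M}} G_{\lambda_2}(x, z) G_{\lambda_1}(z, y)\, dz$, differentiate in $x$, and split the $z$-integral at $B(x, 1/4)$. Outside the ball, \cref{CY} applied to $G_{\lambda_2}(\cdot, z)$ yields $\|\nabla_x G_{\lambda_2}(x, z)\| \le c\, G_{\lambda_2}(x, z)$ uniformly for $\lambda_2 \in [0, \Lambda]$, so that piece is controlled by $c \cdot u(x)/(\lambda_2 - \lambda_1) \le c C_1$ via \cref{Green estimate}; inside the ball, Cheng--Yau on $B(z, d(x, z)/2)$ combined with $G_{\lambda_2} \le G_\Lambda$ shows that $\|\nabla_x G_{\lambda_2}(x, z)\|$ is dominated by a multiple of $d(x, z)^{1-m}$, which is locally integrable and uniform in $\lambda_2$. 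Multiplying by the outer factor $\lambda_2 - \lambda_1$ then yields the claimed estimate.
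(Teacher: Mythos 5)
Your argument is correct but goes through a different tool. The paper stays entirely within the Cheng--Yau framework: setting $F = (G_{\lambda_2} - G_{\lambda_1})/(\lambda_2 - \lambda_1) > 0$, one has $(\Delta_x - \lambda_1)F(\cdot,y) = G_{\lambda_2}(\cdot,y)$, which is a positive $\lambda_2$-eigenfunction; applying \cref{CY} to it produces a bound of the form $\|\nabla_x \Delta_x F\| \lesssim \|\Hess_x F\| + \|\nabla_x F\|$, which is exactly the hypothesis required to apply \cref{CY} a second time, now to $F$ itself (note $\Delta_x F = \lambda_1 F + G_{\lambda_2} > 0$). This yields $\|\nabla_x F\| \le c F$, and \cref{Green estimate} closes the argument. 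Your main route instead reads $u = G_{\lambda_2} - G_{\lambda_1}$ as a solution of $\Delta u = \lambda_1 u + (\lambda_2 - \lambda_1)G_{\lambda_2}$ away from $y$, bounds $u$ and $\Delta u$ in sup-norm on a small ball around $x$ by $O(\lambda_2 - \lambda_1)$ via a shifted version of \cref{Green estimate}, and then invokes interior elliptic regularity ($W^{2,p}$ plus Sobolev, or Schauder) to control $\nabla u$. This is sound: on the compact $K$ the metric data, and hence the elliptic constants, are uniformly controlled. But it imports Calder\'on--Zygmund/Schauder machinery that the paper deliberately avoids, whereas the doubled Cheng--Yau argument needs nothing beyond the Ricci lower bound and the compactness already used in \cref{Green estimate}, and so fits the paper's stated aim of relying only on the Cheng--Yau estimate rather than more refined regularity theory. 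Your alternative sketch via the resolvent identity is closer in spirit (it too leans on Cheng--Yau), but it rests on a near-diagonal bound $\|\nabla_x G_{\lambda_2}(x,z)\| \lesssim d(x,z)^{1-m}$ that requires a quantitative analysis of the Green kernel's singularity which neither you nor the paper develop, so as written it is not a complete argument.

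Two minor imprecisions worth flagging. First, although the Dirac masses cancel, the right-hand side $\lambda_1 u + (\lambda_2 - \lambda_1)G_{\lambda_2}(\cdot,y)$ is still singular at $y$, so $u$ is \emph{not} in $C^\infty(\hat M)$ as you assert (for $m \ge 5$ it is not even bounded near $y$); this is harmless for the proposition, which only concerns $d(x,y) \ge 1/2$, but the blanket smoothness claim is false. Second, in the alternative route the Cheng--Yau ball should be centered near $x$, not at $z$: $B(z, d(x,z)/2)$ contains $z$ and excludes $x$, so \cref{CY} cannot be applied there to estimate $\nabla_x G_{\lambda_2}(x,z)$.
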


\begin{proof}
After fixing $0 \leq \lambda_1 < \lambda_2 \leq \Lambda$, we consider the positive smooth function $F \colon \hat{M} \times \hat{M} \setminus \{(x,x) \colon x \in \hat{M}\} \to (0,\infty)$ defined by
\[
F(x,y) = \frac{G_{\lambda_2}(x,y) - G_{\lambda_1}(x,y)}{\lambda_2 - \lambda_1}.
\]
Since $G_\lambda(\cdot,y)$ is a $\lambda$-eigenfunction away from $y$, it can be checked easily that
\[
(\Delta_x - \lambda_1) F(\cdot,y) = G_{\lambda_2}(\cdot , y)
\]
in $\hat{M} \setminus \{y\}$. We derive from Theorem \ref{CY} applied to $(\Delta_x - \lambda_1) F(\cdot,y)$ that there exists $c_1 > 0$ depending only on $m$, $b$ and $\lambda_0(\hat{M})$ such that
\[
\| \nabla_x  (\Delta_x - \lambda_1) F(x,y) \| \leq c_1 (\Delta_x - \lambda_1) F(x,y)
\]
for any $x, y \in \hat{M}$ with $d(x,y) \geq 1/4$. This yields that
\begin{eqnarray}
\| \nabla_x \Delta_x F(x,y) \| &\leq& \| \nabla_x (\Delta_x -\lambda_1) F(x,y) \| + \lambda_1 \| \nabla_x  F(x,y) \| \nonumber \\
&\leq& c_1 \Delta_x F(x, y) + \lambda_0(\hat{M}) \| \nabla_x F(x,y) \| - c_1 F(x, y) \nonumber \\
&\leq& m c_1 \| \Hess_x F(x,y) \| + \lambda_0(\hat{M}) \| \nabla_x F(x,y) \|, \nonumber
\end{eqnarray}
for any $x,y \in \hat{M}$ with $d(x,y) \geq 1/4$, where we used that $|\Delta f| \leq m \| \Hess f \|$ for any $f \in C^\infty(\hat{M})$. Since
\[
\Delta_x F(x , y) = \lambda_1 F(x,y) + G_{\lambda_2}(x,y) > 0
\]
for any $x \neq y$, it follows from Theorem \ref{CY} applied to $F(\cdot,y)$ that there exists $c_2 > 0$ depending only on $m$, $b$ and $\lambda_0(\hat{M})$ such that
\[
\| \nabla_x F(x,y) \| \leq c_2 F(x,y)
\]
for any $x,y \in \hat{M}$ with $d(x,y) \geq 1/2$. The proof is completed by Lemma \ref{Green estimate}, which implies the existence of $c_3(K,\Lambda) > 0$ such that $F(x,y) \leq c_3(K,\Lambda)$ for any $x,y \in K$ with $d(x,y) \geq 1/2$.
\end{proof}

\begin{cor}\label{L Lip}
For any compact $K \subset \hat{M}$ and $0 < \Lambda < \lambda_0(\hat{M})$, there exists $C(K,\Lambda) > 0$ such that
\[
|L_{T}(\lambda_2,x,y) - L_{T}(\lambda_1,x,y)| \leq C(K,\Lambda) |\lambda_2 - \lambda_1|
\]
for any $T \geq \max\{1 , b/2\}$, $\lambda_1, \lambda_2 \in [0, \Lambda]$ and $x,y \in K$.
\end{cor}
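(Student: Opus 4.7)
The plan is to write the difference $L_T(\lambda_2,x,y)-L_T(\lambda_1,x,y)$ using the definition of $L_T$ as
\[
\Delta_x \chi_{T,y}(x)\bigl(G_{\lambda_2}(x,y)-G_{\lambda_1}(x,y)\bigr) -2\bigl\langle \nabla_x \chi_{T,y}(x),\,\nabla_x G_{\lambda_2}(x,y)-\nabla_x G_{\lambda_1}(x,y)\bigr\rangle,
\]
and then bound the two summands separately, quoting \cref{Green estimate} for the first and \cref{grad Green estimate} for the second. The conclusion then follows from the triangle inequality (and symmetry in $\lambda_1,\lambda_2$).

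The first key step is to notice that the difference is supported in the region $d(x,y)\ge T$. Indeed, $\chi_{T,y}\equiv 1$ on $B(y,T)$, so both $\nabla_x \chi_{T,y}(x)$ and $\Delta_x\chi_{T,y}(x)$ vanish whenever $d(x,y)<T$. Since the hypothesis $T\ge\max\{1,b/2\}\ge 1$ gives $d(x,y)\ge T\ge 1\ge 1/2$ in the complement, we are always in the regime where \cref{Green estimate} and \cref{grad Green estimate} apply; this is precisely the reason those two results were stated with the ``off-diagonal'' distance requirement $d(x,y)\ge 1/2$.

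The second step is to combine the $T$-uniform cutoff bounds $\|\nabla\chi_{T,y}\|_\infty,\|\Delta\chi_{T,y}\|_\infty\le c(m)/T\le c(m)$ coming from \eqref{bound} with the estimates
\[
G_{\lambda_2}(x,y)-G_{\lambda_1}(x,y)\le C_1(K,\Lambda)(\lambda_2-\lambda_1),\qquad \|\nabla_x G_{\lambda_2}(x,y)-\nabla_x G_{\lambda_1}(x,y)\|\le C_2(K,\Lambda)(\lambda_2-\lambda_1)
\]
valid for $x,y\in K$ with $d(x,y)\ge 1/2$. Each of the two summands in the decomposition is therefore majorised by a constant multiple of $(\lambda_2-\lambda_1)$, with a constant depending only on $K$, $\Lambda$, $m$, $b$ and $\lambda_0(\hat M)$, and in particular \emph{independent of $T$}.

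There is essentially no obstacle in this argument beyond a careful bookkeeping: the only subtle point worth flagging is the support observation in the first step, which is what makes the bound uniform in the parameter $T$. Without that observation one might worry about the behaviour of $G_\lambda(\cdot,y)$ and its gradient near the diagonal when $T$ is close to $1$; but since the cutoff derivatives vanish on $B(y,T)$ and $T\ge 1$, the problematic on-diagonal region never contributes, and the existing pointwise Green's function estimates on the compact piece $\{(x,y)\in K\times K:d(x,y)\ge 1/2\}$ are sufficient.
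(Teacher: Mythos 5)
Your proof is correct and is essentially the same as the paper's: decompose the difference via the definition of $L_T$, observe that $L_T$ vanishes near the diagonal since $\chi_{T,y}\equiv 1$ on $B(y,T)$ with $T\ge 1>1/2$, bound the cutoff derivatives uniformly in $T$ by $c(m)$ via \eqref{bound}, and then invoke \cref{Green estimate} and \cref{grad Green estimate}. The paper compresses all this into a one-liner, but the argument is identical; your elaboration of the support observation as the source of $T$-uniformity is exactly the point being silently used.
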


\begin{proof}
Follows readily from Lemma \ref{Green estimate} and Proposition \ref{grad Green estimate}, using that $L_{T}(\lambda,x,y) = 0$ for $d(x,y) < 1/2$ and $\| \nabla_x \chi_{T,y}(x) \|_\infty , \|\Delta_x \chi_{T,y}(x)\|_\infty \leq c(m)$, according to (\ref{bound}).
\end{proof}

\section{Interior parametrix}\label{secauto}

Throughout this section let $M^m$ be a complete Riemannian manifold with bounded sectional curvature, $\hat{p} \colon \hat{M} \to M$ an infinite sheeted normal covering with deck transformation group $\Gamma$ such that
\begin{align}\label{hatmu}
    \mu = \min \{ \lambda_{\ess}(M) , \lambda_0(\hat{M}) \}  > 0,
\end{align}
 and fix $0 < \Lambda < \mu$ and $\delta > 0$.
If $M$ is non-compact, choose compact domain $K$ and $\chi_K^{\pm} \in C^\infty(M)$ according to Proposition \ref{estl2}. To unify the exposition, in the case where $M$ is compact, set $\chi_K^- = 0$.
Let $F\subseteq\hat M$ be a Dirichlet fundamental domain for $\hat{p}$, and set $\tilde{\chi}_K^- = \chi_K^- \circ \hat{p}$.

Given any $n \in \mathbb{N}$ and $n$-sheeted intermediate covering $p_\vf \colon M_\vf \to M$, consider the space $C^\infty_\vf(\hat{M},V_n^0)$ and the subspace
\[
C^{\infty}_{c,\vf}(\hat{M},V_n^0) = \{ f \in C^\infty_\vf(\hat{M},V_n^0) \colon \text{supp} f \cap F \text{ compact} \}. \nonumber
\]
It should be noticed that $C^{\infty}_{c,\vf}(\hat{M},V_n^0)$ is isomorphic to $C^\infty_c(M_\vf) \cap L^2_{\textrm{new}}(M_\vf)$.
Denote by $L^2_\vf(\hat{M},V_n^0)$ the completion of $C^{\infty}_{c,\vf}(\hat{M},V_n^0)$ with respect to
\[
\| f \|_{L^2(F)} = \int_{F} \| f(z) \|_{V_n^0}^2 dz.
\]
Then the aforementioned isomorphism extends to an isomorphism between $ L^2_\vf(\hat{M},V_n^0)$ and $L^2_{\textrm{new}}(M_\vf)$. Under these identifications, the Laplacian on $M_\vf$
\[
\Delta \colon C^\infty_c(M_\vf) \cap L^2_{\textrm{new}}(M_\vf) \to L^2_{\textrm{new}}(M_\vf)
\]
corresponds to the Laplacian (acting component-wise)
\[
\Delta \colon C^{\infty}_{c,\vf}(\hat{M},V_n^0) \subset L^2_\vf(\hat{M},V_n^0) \to L^2_\vf(\hat{M},V_n^0).
\]
Since the first operator is essentially-self adjoint, it follows that so is the latter one. Denote by $\Delta_\vf$ its closure and let $\mathcal{D}(\Delta_\vf)$ be its domain of definition, which is the completion of $C^{\infty}_{c,\vf}(\hat{M},V_n^0)$ with respect to the graph norm
\[
\| f \|_{\Delta_\vf} = \| f \|_{L^2(F)} + \| \Delta f \|_{L^2(F)}.
\]
Then $\mathcal{D}(\Delta_\vf)$ is isomorphic to $\mathcal{D}(\Delta_{M_\vf}) \cap L^2_{\textrm{new}}(M_\vf)$. Since $\Delta$ is essentially self-adjoint, we obtain that $\Delta_\vf$ coincides with the adjoint $\Delta^*$, which yields that if
\begin{equation}\label{adj}
\langle f , \Delta g \rangle_{L^2(F)} = \langle h , g \rangle_{L^2(F)}
\end{equation}
for some $f,h \in L^2_\vf(\hat{M},V_n^0)$ and any $g \in C^\infty_{c,\vf}(\hat{M}, V_n^0)$, then $f \in \mathcal{D}(\Delta_\vf)$ and $\Delta_\vf f = h$. It is noteworthy that it is actually sufficient that (\ref{adj}) holds for any evenly covered, precompact, open domain $U \subset M$ and $g \in C^\infty_{c,\vf}(\hat{M}, V_n^0)$ supported in $\hat p^{-1}(U)$, since any $g \in C^\infty_{c,\vf}(\hat{M}, V_n^0)$ can be written as a sum of such functions (e.g. consider a partition of unity on $M$ with functions supported in such domains and cut-off $g$ using the lifted functions to $\hat{M}$).

\begin{lem}\label{inner prod alt}
Let $g \in C^\infty_{c,\vf}(\hat{M},V_n^0)$ be supported in the preimage $\hat p^{-1}(U)$ of an evenly covered, precompact, open domain $U$ of $M$. If $\hat{U}$ is a connected component of $p^{-1}(U)$, then
\[
\langle f , g \rangle_{L^2(F)} = \int_{\hat{U}} \langle f ,g \rangle_{V_n^0}
\]
for any $f \in L^2_\vf(\hat{M},V_n^0)$.
\end{lem}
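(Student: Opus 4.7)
My proof plan is to unfold the $L^2(F)$ pairing using $\Gamma$-equivariance. The pointwise inner product $\phi(z) := \langle f(z), g(z) \rangle_{V_n^0}$ is a scalar function on $\hat M$, and the whole argument reduces to showing it is $\Gamma$-invariant; once that is in hand, descent to $M$ followed by even-coverage identifies both sides of the claimed equality with the integral of the descended function over $U$.

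For $\Gamma$-invariance I will use that $\rho_n^0$ is the restriction to $V_n^0$ of the permutation representation of $S_n$ on $\C^n$; since permutations of coordinates are unitary, $(\rho_n^0 \circ \vf)(\gamma)$ is an isometry of $V_n^0$ for every $\gamma \in \Gamma$. Using the equivariance $f \circ \gamma = (\rho_n^0 \circ \vf)(\gamma) f$ and similarly for $g$, one gets $\phi(\gamma z) = \phi(z)$ for all $\gamma \in \Gamma$. Hence $\phi$ descends to a measurable function $\bar\phi$ on $M$, and since $g$ is supported in $\hat p^{-1}(U)$, so is $\phi$, so $\bar\phi$ is supported in $U$.

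First I will prove the identity for $f \in C^\infty_{c,\vf}(\hat M, V_n^0)$. By the defining property of a fundamental domain, $\langle f, g\rangle_{L^2(F)} = \int_F \phi = \int_M \bar\phi = \int_U \bar\phi$. On the other hand, the even-coverage hypothesis gives that $\hat p|_{\hat U}\colon \hat U \to U$ is an isometric diffeomorphism, under which $\phi|_{\hat U}$ is the pullback of $\bar\phi|_U$, so $\int_{\hat U} \phi = \int_U \bar\phi$, which yields the claim in the smooth case.

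To extend to general $f \in L^2_\vf(\hat M, V_n^0)$, I will approximate $f$ by $f_k \in C^\infty_{c,\vf}(\hat M, V_n^0)$ with $\|f_k - f\|_{L^2(F)} \to 0$; applying the same descent argument to the $\Gamma$-invariant non-negative function $z \mapsto \|f_k(z) - f(z)\|_{V_n^0}^2$ shows $\|f_k - f\|_{L^2(\hat U, V_n^0)}^2 \le \|f_k - f\|_{L^2(F)}^2 \to 0$, so $f_k \to f$ in $L^2(\hat U, V_n^0)$ as well. Cauchy--Schwarz then makes both sides of the asserted identity continuous in $f$ (with $g$ fixed), and passing to the limit completes the proof. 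The whole argument rests on the single observation that $\rho_n^0 \circ \vf$ acts by unitary transformations, so I do not foresee any substantive obstacle.
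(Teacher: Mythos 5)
Your proof is correct and follows essentially the same route as the paper's: show the scalar function $z\mapsto\langle f(z),g(z)\rangle_{V_n^0}$ is a.e.\ $\Gamma$-invariant (via unitarity of $\rho_n^0\circ\vf$), descend it to an $L^1$ function on $M$ supported in $U$, and use that $\hat p$ restricts to an isometry $\hat U\to U$. The only difference is presentational: the paper applies the descent argument directly to an arbitrary $f\in L^2_\vf(\hat M,V_n^0)$ (implicitly using that elements of the completion are a.e.\ $\Gamma$-equivariant measurable functions), whereas you first treat smooth compactly supported $f$ and then pass to the limit; your density step already relies on the same identification, so the two arguments have the same content.
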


\begin{proof}
For any $f \in L^2_\vf(\hat{M},V_n^0)$, we have that $\langle f ,g \rangle_{V_n^0}$ is almost everywhere $\Gamma$-invariant and descends to some $h \in L^1(M)$ supported in $U$. Therefore,
\[
\langle f , g \rangle_{L^2(F)} = \int_F \langle f ,g \rangle_{V_n^0} = \int_M h = \int_U h = \int_{\hat{U}} (h \circ p) = \int_{\hat{U}} \langle f ,g \rangle_{V_n^0},
\]
where we used that $\hat p \colon \hat{U} \to U$ is an isometry.
\end{proof}

For any $0 \leq \lambda \leq \Lambda$, consider the operators $R_{T}(\lambda) \colon L^2(\hat{M}) \to \mathcal{D}(\Delta)$ and $\mathbb{L}_{T}(\lambda) \colon L^2(\hat{M}) \to L^2(\hat{M})$, as in Section \ref{secparin}.  Denote by $R_{T,n}(\lambda)$ and $\mathbb{L}_{T,n}(\lambda)$ the integral operators on $C^\infty(\hat{M},V_n)$ with kernels $R_{T}(\lambda,x,y)$ and $L_{T}(\lambda,x,y)$, respectively.

\begin{prop}\label{op on aut}
For any $0 \leq \lambda \leq \Lambda$ the following hold:
\begin{enumerate}
	\item The operator $R_{T,n}(\lambda)(1-\tilde{\chi}_K^-)$ is well-defined on $C^\infty_{c,\vf}(\hat{M},V_n^0)$ and extends to a bounded operator
	\[
	R_{T,n}(\lambda)(1-\tilde{\chi}_K^-) \colon L^2_\vf(\hat{M},V_n^0) \to \mathcal{D}(\Delta_\vf).
	\]
	\item  The operator $\mathbb{L}_{T,n}(\lambda)(1-\tilde{\chi}_K^-)$ is well-defined on $C^\infty_{c,\vf}(\hat{M},V_n^0)$ and extends to a bounded operator
	\[
	\mathbb{L}_{T,n}(\lambda)(1-\tilde{\chi}_K^-) \colon L^2_\vf(\hat{M},V_n^0) \to L^2_\vf(\hat{M},V_n^0).
	\]
	\item We have that
	\[
	(\Delta_\vf - \lambda) R_{T,n}(\lambda)(1-\tilde{\chi}_K^-) f = (1 - \tilde{\chi}_K^-) f + \mathbb{L}_{T,n}(\lambda)(1-\tilde{\chi}_K^-) f
	\]
	for any $f \in L^2_\vf(\hat{M},V_n^0)$.
\end{enumerate}
\end{prop}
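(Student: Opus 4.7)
The plan is to reduce all three claims to their scalar counterparts on $\hat M$ established in \cref{secparin}, exploiting the $\Gamma$-invariance of the kernels recorded in (\ref{Gamma equiv}). Fix $f \in C^\infty_{c,\vf}(\hat M, V_n^0)$ and set $g := (1-\tilde\chi_K^-)f$, which is again in $C^\infty_{c,\vf}(\hat M, V_n^0)$. The crucial feature of $g$ is that, since $1-\chi_K^-$ is compactly supported on $M$, the zero-extension $g_0 := g \cdot \chi_F$ of $g|_F$ belongs to $L^2(\hat M, V_n^0)$ with compact support and $\|g_0\|_{L^2(\hat M)} = \|g\|_{L^2(F)}$. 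Writing $\rho := \rho_n^0 \circ \vf$, the $\Gamma$-equivariance of $g$ yields the a.e.\ decomposition $g(x) = \sum_{\gamma \in \Gamma} \rho(\gamma) g_0(\gamma^{-1} x)$, in which only one term is nonzero at each point.

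Since $R_T(\lambda,x,y)$ and $L_T(\lambda,x,y)$ vanish for $d(x,y) > 2T$, the effective domain of integration at each $x$ is the compact set $B(x,2T) \cap \supp g$, so the integrals defining $R_{T,n}(\lambda) g$ and $\mathbb{L}_{T,n}(\lambda) g$ converge absolutely. A change of variables using the $\Gamma$-invariance of the kernels then gives
\[
(R_{T,n}(\lambda) g)(x) = \sum_{\gamma \in \Gamma} \rho(\gamma) (R_{T,n}(\lambda) g_0)(\gamma^{-1} x),
\]
with the analogous formula for $\mathbb{L}_{T,n}(\lambda) g$; in particular both outputs are $\Gamma$-equivariant. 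Applying \cref{R est} componentwise to $g_0 \in L^2(\hat M, V_n^0)$ shows that each component of $R_{T,n}(\lambda) g_0$ lies in $\mathcal{D}(\Delta)$ and that $(\Delta - \lambda) R_{T,n}(\lambda) g_0 = g_0 + \mathbb{L}_{T,n}(\lambda) g_0$ in $L^2(\hat M, V_n^0)$. Pulling this back through each $\gamma$ (which commutes with $\Delta$ as an isometry) and $\rho$-twist-summing then yields assertion (3) pointwise on $\hat M$. Elliptic regularity applied to $g_0$ (smooth, compactly supported) shows that $R_{T,n}(\lambda) g$ is smooth, so the pointwise and distributional Laplacians agree.

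For the $L^2_\vf$-bound, observe that $R_{T,n}(\lambda) g_0$ is supported in the $2T$-neighborhood of $\supp g_0 \subset F$, hence for each $x \in F$ the displayed sum has at most $N_T$ nonzero terms, where $N_T$ is the number of $\gamma \in \Gamma$ with $\gamma^{-1} F$ meeting the $2T$-enlargement of $F$. The constant $N_T$ depends only on $T$ and $F$ (not on $g$) and is finite by properness of the $\Gamma$-action together with the Bishop--Gromov volume comparison under the Ricci lower bound. A pointwise Cauchy--Schwarz across these $N_T$ terms, together with the unitarity of $\rho$, gives $\|R_{T,n}(\lambda) g\|_{L^2(F)}^2 \leq N_T \|R_{T,n}(\lambda) g_0\|_{L^2(\hat M)}^2$. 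The componentwise scalar bound $\|R_T(\lambda)\|_{L^2(\hat M)} \leq (\lambda_0(\hat M) - \lambda)^{-1}$ from (\ref{R norm}) then yields an $L^2_\vf$-bound on $R_{T,n}(\lambda)(1-\tilde\chi_K^-)$ uniform in $g$; combined with (3), this gives $R_{T,n}(\lambda) g \in \mathcal{D}(\Delta_\vf)$ and a bound in the graph norm, proving (1). The analogous argument with \cref{L estimate} in place of (\ref{R norm}) proves (2).

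The main technical point is making the bound uniform in $g$: a direct estimate would pick up a constant depending on $\supp g$, and the key observation is that the relevant overlap number $N_T$ depends only on how $F$ overlaps its own $2T$-neighborhood under $\Gamma$, independent of how $g_0$ is distributed inside $F$. The Ricci lower bound enters here through volume comparison to guarantee $N_T < \infty$, tying the whole argument back to the paper's geometric hypotheses.
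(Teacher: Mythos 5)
Your strategy is a genuine alternative to the paper's: where the paper fixes $x$ and uses the cutoff structure (via \cite[Lemma 5.4]{HideMagee21}) to replace $f$ by $\psi f$ locally and then verifies the adjoint identity \eqref{adj} against test functions supported in evenly covered sets, you instead expand $g$ as a $\rho$-twisted sum of translates of the single-cell restriction $g_0$ and push the scalar results of \cref{secparin} term by term through the sum. The decomposition $g=\sum_\gamma \rho(\gamma)g_0(\gamma^{-1}\cdot)$ and the resulting formula for $R_{T,n}(\lambda)g$ are correct, and this is a clean way to package the reduction.

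However, there is a genuine gap in the $L^2_\vf$-bound: you define $N_T$ as the number of $\gamma$ with $\gamma^{-1}F$ meeting the $2T$-enlargement of $F$, and claim this is finite by Bishop--Gromov. When $M$ is non-compact the fundamental domain $F$ is non-compact, and $N_T$ as defined need not be finite (for instance, in a cusp neighbourhood of a non-cocompact hyperbolic surface the $2T$-enlargement of $F$ eventually contains arbitrarily many translates of $F$). Bishop--Gromov controls volume, not the overlap multiplicity of non-compact tiles, and does not repair this. The fix is exactly the observation the paper extracts from \cite[Lemma 5.4]{HideMagee21}: since $1-\chi^-_K$ has \emph{compact} support on $M$, one has $\supp g_0 \subset D := F\cap\supp(1-\tilde\chi_K^-)$, which is compact, so $R_{T,n}(\lambda)g_0$ is supported in the compact $2T$-neighbourhood $D'$ of $D$, and the relevant count is $\#\{\gamma : \gamma D'\cap D'\neq\emptyset\}$. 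This is finite by properness of the deck action alone -- no curvature hypothesis is needed at this step. Replacing your $N_T$ by this quantity makes the Cauchy--Schwarz argument go through.

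A smaller but real gap is in part (3): the pointwise identity $(\Delta-\lambda)R_{T,n}(\lambda)g = g + \mathbb{L}_{T,n}(\lambda)g$ and the smoothness of $R_{T,n}(\lambda)g$ do not by themselves place $R_{T,n}(\lambda)g$ in $\mathcal{D}(\Delta_\vf)$. You still need to verify the adjoint condition \eqref{adj}, i.e.\ integrate by parts against $h\in C^\infty_{c,\vf}(\hat M,V_n^0)$ over the fundamental domain; the boundary terms on $\partial F$ must be seen to cancel, which the paper handles by working over a single evenly covered sheet via \cref{inner prod alt}. Your appeal to elliptic regularity shows the pointwise and distributional Laplacians coincide, but the remaining step -- that this distributional Laplacian is the $\Delta_\vf$-image -- should be made explicit.
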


\begin{proof}
Given any $f \in C^\infty_{c,\vf}(\hat{M},V_n^0)$ and any $x \in F$ we have that
		\[
R_{T,n}(\lambda)(1-\tilde{\chi}_K^-) f (x) = \int_{\hat{M}} R_{T}(\lambda,x,y) (1 - \tilde{\chi}_K^- (y)) f(y) dy.	
		\]
Keeping in mind that $R_{T}(\lambda,x,y) = 0$ if $d(x,y) \geq 2T$, and that the support of $1 - \tilde{\chi}_K^-$ may be written as the union of $\gamma D$ with $\gamma \in \Gamma$, where $D \subset F$ compact, we obtain from \cite[Lemma 5.4]{HideMagee21} (notice that the proof of \cite[Lemma 5.4]{HideMagee21} works  in our setting without any changes) that there exists a compact $D_1 \subset F$ and a finite $S \subset \Gamma$ depending on $T$ and $\chi_K^-$, such that the above integrad is supported in the compact $D_2 = \cup_{\gamma \in S} \gamma^{-1}D$ and the integral is zero unless $x \in D_1$. Consider a compactly supported, smooth $\psi \colon \hat{M} \to [0,1]$ with $\psi = 1$ in $D_2 \cup D$. Then
\[
R_{T,n}(\lambda)(1-\tilde{\chi}_K^-) f (x) = R_{T,n}(\lambda)(1-\tilde{\chi}_K^-) (\psi f) (x),
\]
which shows that this expression is well-defined for $x \in F$. Similarly, for any $\gamma \in \Gamma$ and $x \in F$ we have that
\[
R_{T,n}(\lambda)(1-\tilde{\chi}_K^-) f (\gamma x) = R_{T,n}(\lambda)(1-\tilde{\chi}_K^-) ((\psi \circ \gamma^{-1}) f) (\gamma x)
\]
which yields that  $R_{T,n}(\lambda)(1-\tilde{\chi}_K^-) f(x) $ is well-defined for any $x \in \hat{M}$. Using (\ref{Gamma equiv}) and the fact $\tilde{\chi}_K^-$ is $\Gamma$-invariant, it is straightforward to verify that $R_{T,n}(\lambda)(1-\tilde{\chi}_K^-) f$ is automorphic. Moreover, since $f(y) \in V_n^0$ for any $y \in \hat{M}$, we readily see that $R_{T,n}(\lambda)(1-\tilde{\chi}_K^-) f (x) \in V_n^0$ for any $x \in \hat{M}$.

Choose an orthonormal basis $\{e_i\}_{i=1}^{n-1}$ of $V_n^0$ and set $f_i(x) = \langle f(x) , e_i \rangle$. Then
\begin{eqnarray}
R_{T,n}(\lambda)(1-\tilde{\chi}_K^-) f (x) &=& R_{T,n}(\lambda)(1-\tilde{\chi}_K^-) (\psi f) (x) \nonumber\\
&=& \sum_{i=1}^{n-1} R_{T}(\lambda)(1-\tilde{\chi}_K^-) (\psi f_i) (x) e_i,\nonumber
\end{eqnarray}
for any $x \in F$, which together with (\ref{R norm}) gives that
\begin{eqnarray}
\| R_{T,n}(\lambda)(1-\tilde{\chi}_K^-) f \|^2_{L^2(F)} &\leq& \sum_{i=1}^{n-1} \|  R_{T}(\lambda)(1-\tilde{\chi}_K^-) (\psi f_i) \|_{L^2(\hat{M})}^2 \nonumber \\
&\leq& \sum_{i=1}^{n-1} \| R_{T}(\lambda) \|_{L^2} \| \psi f_i \|^2_{L^2(\hat{M})} \nonumber\\
&=& \| R_{T}(\lambda) \|_{L^2} \| \psi f \|^2_{L^2(\hat{M})} \nonumber\\
&\leq& C \| R_{T}(\lambda) \|_{L^2} \| f \|^2_{L^2(F)}, \nonumber
\end{eqnarray}
where we used that $f$ is automorphic, $0 \leq \tilde{\chi}_K^-, \psi \leq 1$ and $\psi$ is supported in a compact domain which intersects at most $C$ translates of $F$. This shows that the operator $R_{T,n}(\lambda)(1-\tilde{\chi}_K^-)$ is well-defined on $C^\infty_{c,\vf}(\hat{M},V_n^0)$ and extends to a bounded operator
\[
R_{T,n}(\lambda)(1-\chi_K^-) \colon L^2_\vf(\hat{M},V_n^0) \to L^2_\vf(\hat{M},V_n^0).
\]

Similarly, using that $L_{T}(\lambda,x,y) = 0$ if $d(x,y) \geq 2T$ and Proposition \ref{L estimate}, we obtain that $\mathbb{L}_{T,n}(\lambda)(1-\tilde{\chi}_K^-)$ is well-defined on $C^\infty_{c,\vf}(\hat{M},V_n^0)$ and extends to a bounded operator
\[
\mathbb{L}_{T,n}(\lambda)(1-\tilde{\chi}_K^-) \colon L^2_\vf(\hat{M},V_n^0) \to L^2_\vf(\hat{M},V_n^0).
\]

Let $U \subset M$ be an evenly covered, precompact, open domain of $M$, consider $g \in C^\infty_{c,\vf}(\hat{M},V_n^0)$ supported in $\hat p^{-1}(U)$, choose a connected component $\hat{U}$ of $\hat p^{-1}(U)$, and let $\phi \in C^\infty_c(\hat{M})$ with $\phi = 1$ in the $2T$-neighborhood of $\hat{U}$. Then $R_{T}(\lambda,x,y) = L_{T}(\lambda,x,y) = 0$ for any $x \in \hat{U}$ and $y \notin \{ \phi = 1 \}$, which implies that
\begin{eqnarray}
	(1 - \tilde{\chi}_K^-) f(x) &=& (1 - \tilde{\chi}_K^-) (\phi f)(x),\nonumber\\
	R_{T,n}(\lambda) (1 - \tilde{\chi}_K^-) f(x) &=& 	R_{T,n}(\lambda) (1 - \tilde{\chi}_K^-) (\phi f)(x)\nonumber\\
	\mathbb{L}_{T,n}(\lambda) (1 - \tilde{\chi}_K^-) f(x) &=& 	\mathbb{L}_{T,n}(\lambda) (1 - \tilde{\chi}_K^-) (\phi f)(x), \nonumber
\end{eqnarray}
 for any $f \in C^\infty_{c,\vf}(\hat{M},V_n^0)$ and $x \in \hat{U}$. Using these, Lemma \ref{inner prod alt} and Proposition \ref{R est},  we compute
{\allowdisplaybreaks
\begin{align*}
\langle (1 - \tilde{\chi}_K^-) f + \mathbb{L}_{T,n}&(\lambda)(1-\tilde{\chi}_K^-) f , g \rangle_{L^2(F)} \\
&= \int_{\hat{U}} \langle (1 - \tilde{\chi}_K^-) f + \mathbb{L}_{T,n}(\lambda)(1-\tilde{\chi}_K^-) f , g \rangle_{V_n^0}  \\
&= \int_{\hat{U}} \langle (1 - \tilde{\chi}_K^-) (\phi f) + \mathbb{L}_{T,n}(\lambda)(1-\tilde{\chi}_K^-) (\phi f) , g \rangle_{V_n^0} \\
&= \int_{\hat{U}} \langle (\Delta - \lambda) R_{T,n}(\lambda) (1-\tilde{\chi}_K^-) (\phi f) ,  g \rangle_{V_n^0} \\
&= \int_{\hat{U}} \langle R_{T,n}(\lambda) (1-\tilde{\chi}_K^-) (\phi f) , (\Delta - \lambda)  g \rangle_{V_n^0} \\
&= \int_{\hat{U}} \langle R_{T,n}(\lambda) (1-\tilde{\chi}_K^-) f , (\Delta - \lambda)  g \rangle_{V_n^0} \\
&= \langle R_{T,n}(\lambda) (1-\tilde{\chi}_K^-) f , (\Delta - \lambda)  g \rangle_{L^2(F)}.
\end{align*}
}Since $U$  and $g$ are arbitrary, we obtain that $R_{T,n}(\lambda) (1-\tilde{\chi}_K^-) f \in \mathcal{D}(\Delta_\vf)$ and
\begin{equation}\label{eq for smooth}
	(\Delta_\vf - \lambda) R_{T,n}(\lambda) (1-\tilde{\chi}_K^-) f = (1 - \tilde{\chi}_K^-) f + \mathbb{L}_{T,n}(\lambda)(1-\tilde{\chi}_K^-) f 
\end{equation}
for any $f \in C^\infty_{c,\vf}(\hat{M},V_n^0)$. Bearing in mind that $R_{T,n}(\lambda)(1-\tilde{\chi}_K^-)$, $(1-\tilde{\chi}_K^-)$ and $\mathbb{L}_{T,n}(\lambda)(1-\tilde{\chi}_K^-)$ are bounded operators in $L^2_\vf(\hat{M},V_n^0)$, we obtain that the first one can be regarded as a bounded operator
\[
R_{T,n}(\lambda)(1-\tilde{\chi}_K^-) \colon L^2_\vf(\hat{M},V_n^0) \to \mathcal{D}(\Delta_\vf).
\]
Finally, since both sides of (\ref{eq for smooth}) involve bounded operators, the equality readily extends to any $f \in L^2_\vf(\hat{M},V_n^0)$.
\end{proof}

In view of Proposition \ref{op on aut}, via the identifications $L^2_{\textrm{new}}(M_\vf) \cong L^2_\vf(\hat{M},V_n^0)$ and $\mathcal{D}(\Delta_{M_\vf}) \cap L^2_{\textrm{new}}(M_\vf) \cong \mathcal{D}(\Delta_\vf)$, multiplication with $1-\tilde{\chi}_K^-$ corresponds to multiplication with $1 - \chi_{K,\vf}^-$, the operator $R_{T,n}(\lambda)(1-\tilde{\chi}_K^-) \colon L^2_\vf(\hat{M},V_n^0) \to \mathcal{D}_\vf(\Delta,V_n^0)$ corresponds to a bounded operator
\[
\mathbb{M}_\vf^{\text{int}}(\lambda) \colon L^2_{\textrm{new}}(M_\vf) \to \mathcal{D}(\Delta_{M_\vf}) \cap L^2_{\textrm{new}}(M_\vf),
\]
and the operator $\mathbb{L}_{T,n}(\lambda)(1-\tilde{\chi}_K^-) \colon L^2_\vf(\hat{M},V_n^0) \to L^2_\vf(\hat{M},V_n^0)$ corresponds to a bounded operator
\[
\mathbb{L}_\vf^{\text{int}}(\lambda) \colon L^2_{\textrm{new}}(M_\vf) \to L^2_{\textrm{new}}(M_\vf),
\]
which satisfy
\begin{eqnarray}\label{int relation}
(\Delta_{M_\vf} - \lambda) \mathbb{M}_\vf^{\text{int}}(\lambda) = (1 - \chi_{K,\vf}^-) + \mathbb{L}_\vf^{\text{int}}(\lambda)
\end{eqnarray}
for any $0 \leq \lambda \leq \Lambda$.

In the case where $M$ is non-compact, consider $\mathbb{M}_\vf^K(\lambda)$ and $\mathbb{L}_\vf^K(\lambda)$ as in Section \ref{secparex}. If $M$ is compact, we have that $\chi_K^- = 0$, and we set $\mathbb{M}_\vf^K(\lambda) = \mathbb{L}_\vf^K (\lambda) = 0$. In any case, defining
\[
\mathbb{M}_\vf(\lambda) = \mathbb{M}_\vf^{\text{int}}(\lambda) + \mathbb{M}_\vf^K(\lambda) \colon L^2_{\textrm{new}}(M_\vf) \to \mathcal{D}(\Delta_{M_\vf}) \cap L^2_{\textrm{new}}(M_\vf),
\]
we conclude from (\ref{end relation}), (\ref{int relation}) and Proposition \ref{estl2} that
\begin{equation}\label{patched relation}
(\Delta_{M_\vf} - \lambda) \mathbb{M}_\vf(\lambda) = 1 + \mathbb{L}_\vf^{\text{int}}(\lambda) + \mathbb{L}_\vf^{K}(\lambda)
\end{equation}
and $\| \mathbb{L}_\vf^{K}(\lambda)\|_{L^2} \leq \delta$ for any $n \in \mathbb{N}$, $\vf \in \Hom(\Gamma,S_n)$ and $0 \leq \lambda \leq \Lambda$.

\section{Proof of main theorem}\label{secrandom}

In the setting of the previous section, letting $\rho_\vf^0 = \rho_n^0 \circ \vf$, we have that
\begin{eqnarray}
\mathbb{L}_{T,n}(\lambda)(1 - \tilde{\chi}_K^-)(f)(x) &=& \sum_{\gamma \in \Gamma} \int_{F} L_{T}(\lambda,x,\gamma^{-1} y) (1 - \tilde{\chi}_K^-(y)) f(\gamma^{-1} y) dy \nonumber \\
&=& \sum_{\gamma \in \Gamma} \int_{F} L_{T}(\lambda, \gamma  x, y) (1 - \tilde{\chi}_K^-(y)) \rho_\vf^0(\gamma^{-1})f(y) dy \nonumber 
\end{eqnarray}
for any $x \in F$ and any $f \in C^\infty_{c,\vf}(\hat{M},V_n^0)$, in view of (\ref{Gamma equiv}). Keeping in mind that $L_{T}(\lambda,x,y) = 0$ for $d(x,y) \geq 2T$, it follows from \cite[Lemma 5.4]{HideMagee21} that there exists a compact $D \subset F$ and a finite $S \subset \Gamma$ depending only on $T$ and $\chi_K^-$ such that the integrand is zero unless $x,y \in D$ and $\gamma \in S$. Therefore, we have that
\[
\mathbb{L}_{T,n}(\lambda)(1 - \tilde{\chi}_K^-)(f)(x) = \sum_{\gamma \in S} \int_{D} L_{T}(\lambda, \gamma  x, y) (1 - \tilde{\chi}_K^-(y)) \rho_\vf^0(\gamma^{-1})f(y) dy
\]
for any $x \in F$ and $f \in C^\infty_{c,\vf}(\hat{M},V_n^0)$.

Choose an orthonormal basis $\{e_i\}_{i=1}^{n-1}$ of $V_n^0$. Keeping in mind that $L^2_\vf(\hat{M} , V_n^0)$ consists of almost everywhere automorphic maps, we readily see that there is an isometric isomorphism $L^2_\vf(\hat{M} , V_n^0) \cong L^2(F) \otimes V_n^0$ given by
\[
f \mapsto \sum_{i=1}^{n-1} \langle f|_F , e_i \rangle \otimes e_i.
\]
Under this identification, the operator $\mathbb{L}_{T,n}(\lambda)(1 - \tilde{\chi}_K^-)$ corresponds to the operator
\[
\mathcal{L}_{\vf}^{T}(\lambda) = \sum_{\gamma \in S} a_\gamma^T(\lambda) \otimes \rho_\vf^0(\gamma^{-1}),
\]
where $a_\gamma^T(\lambda) \colon L^2(F) \to L^2(F)$ is the integral operator with kernel
\[
k_\gamma(\lambda, x,y) = L_{T}(\lambda,\gamma x,y) (1 - \tilde{\chi}_K^-(y))
\]
for any $\gamma \in \Gamma$. The fact that
\[
\int_F \int_F k_\gamma(\lambda,x,y)^2 dy dx = \int_D \int_D k_\gamma(\lambda,x,y)^2 dy dx < \infty
\]
means that $k_\gamma(\lambda,\cdot,\cdot) \in L^2(F \times F)$, and therefore, the operator $a_\gamma^T(\lambda)$ is Hilbert-Schmidt for any $0 \leq \lambda \leq \Lambda$ and $\gamma \in \Gamma$.

\begin{lem}\label{a Lip}
There exists $c > 0$ depending on $\Lambda$, $T$ and $\chi_K^-$ such that
\[
\| a_\gamma^T(\lambda_2) - a_\gamma^T(\lambda_1) \|_{L^2(F)} \leq c (\lambda_2 - \lambda_1)
\]
for any $0 \leq \lambda_1 \leq \lambda_2 \leq \Lambda$ and $\gamma \in S$.
\end{lem}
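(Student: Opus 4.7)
The plan is to reduce this estimate to the pointwise Lipschitz bound for $L_T(\lambda,\cdot,\cdot)$ established in \cref{L Lip}, using the support restrictions already identified in the paragraphs preceding the statement.

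First, I would write
\[
a_\gamma^T(\lambda_2) - a_\gamma^T(\lambda_1) \colon L^2(F)\to L^2(F)
\]
as the integral operator whose kernel is
\[
\bigl(L_T(\lambda_2,\gamma x,y) - L_T(\lambda_1,\gamma x,y)\bigr)\bigl(1-\tilde{\chi}_K^-(y)\bigr).
\]
From the earlier discussion (invoking \cite[Lemma 5.4]{HideMagee21}, and using that $L_T(\lambda,x,y)=0$ for $d(x,y)\ge 2T$ and that $1-\tilde{\chi}_K^-$ is supported in $\bigcup_{\gamma\in\Gamma}\gamma D$), there exist a compact set $D\subset F$ and a finite set $S\subset\Gamma$, both depending only on $T$ and $\chi_K^-$, such that this kernel vanishes unless $x,y\in D$ and $\gamma\in S$.

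Next, I would introduce the compact set $K'=D\cup\bigcup_{\gamma\in S}\gamma D\subset\hat M$, so that $\gamma x, y\in K'$ whenever the kernel is nonzero and $\gamma\in S$. Applying \cref{L Lip} to $K'$ yields a constant $C(K',\Lambda)>0$ such that
\[
\bigl|L_T(\lambda_2,\gamma x,y) - L_T(\lambda_1,\gamma x,y)\bigr| \le C(K',\Lambda)(\lambda_2-\lambda_1)
\]
for all $x,y\in D$, $\gamma\in S$, $T\ge\max\{1,b/2\}$ and $0\le\lambda_1\le\lambda_2\le\Lambda$. Since $0\le 1-\tilde{\chi}_K^-\le 1$, the same bound holds for the kernel of $a_\gamma^T(\lambda_2)-a_\gamma^T(\lambda_1)$ on $D\times D$, and the kernel vanishes off $D\times D$.

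Finally, I would upgrade this pointwise bound to an operator norm bound by dominating the operator norm by the Hilbert--Schmidt norm: the square-integral of the kernel over $F\times F$ reduces to an integral over the bounded domain $D\times D$, giving an estimate of the form $C(K',\Lambda)\,\mathrm{Vol}(D)\,(\lambda_2-\lambda_1)$. Setting $c=C(K',\Lambda)\,\mathrm{Vol}(D)$ finishes the proof; note that $c$ depends only on $\Lambda$, $T$ and $\chi_K^-$, as required. There is no real obstacle here: \cref{L Lip} does the analytic work, and the only task is to package the compactness of the effective support provided by the cutoff $1-\tilde{\chi}_K^-$ and the locality of $L_T$.
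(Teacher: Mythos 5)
Your proof is correct and follows essentially the same route as the paper's: bound the operator norm of $a_\gamma^T(\lambda_2)-a_\gamma^T(\lambda_1)$ by the Hilbert--Schmidt norm of its kernel, observe that the kernel is supported in $D\times D$ for $\gamma\in S$, and invoke \cref{L Lip} on a compact set containing $D$ and all translates $\gamma D$ with $\gamma\in S$. The paper leaves the choice of the enveloping compact set $D'$ slightly implicit while you give it explicitly as $K'=D\cup\bigcup_{\gamma\in S}\gamma D$, but this is a cosmetic difference.
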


\begin{proof}
For any $\gamma \in S$ and $0 \leq \lambda_1 \leq \lambda_2 \leq \Lambda$ we have that
\[
| ((a_\gamma^T(\lambda_2) - a_\gamma^T(\lambda_1) ) f ) (x) | \leq \| k_\gamma(\lambda_2,x , \cdot) - k_\gamma(\lambda_1,x , \cdot) \|_{L^2(F)} \| f \|_{L^2(F)}
\]
for almost every $x \in F$ and any $f \in L^2(F)$, and hence,
\[
\|  a_\gamma^T(\lambda_2) - a_\gamma^T(\lambda_1)  \|_{L^2(F)} \leq \| k_\gamma(\lambda_2 , \cdot , \cdot) -  k_\gamma(\lambda_1 , \cdot , \cdot)  \|_{L^2(F \times F)}.
\]
Bearing in mind that $k_\gamma(\lambda_2 , \cdot , \cdot) -  k_\gamma(\lambda_1 , \cdot , \cdot)$ is supported in $D \times D$, we obtain that
\begin{eqnarray}
\|  a_\gamma^T(\lambda_2) - a_\gamma^T(\lambda_1)  \|_{L^2(F)} &\leq& |D| \| k_\gamma(\lambda_2 , \cdot , \cdot) -  k_\gamma(\lambda_1 , \cdot , \cdot)  \|_{L^\infty(D \times D)} \nonumber \\
&\leq& |D| \sup_{x,y \in D} | L_{T}(\lambda_2 , \gamma x , y) -  L_{T}(\lambda_1 , \gamma x , y) |. \nonumber
\end{eqnarray}
Since $S$ is finite and $D$ is compact, we readily see that there exists a compact $D^\prime \subset \hat{M}$ which contains the translates $\gamma D$ for all $\gamma \in S$ and the assertion follows from Corollary \ref{L Lip}.
\end{proof}

Denoting by $\rho_\infty \colon \Gamma \to \text{End}(\ell^2(\Gamma))$ the right regular representation of $\Gamma$, consider the operator $\mathcal{L}_{\infty}^{T}(\lambda) \colon L^2(F) \otimes \ell^2(\Gamma) \to  L^2(F) \otimes \ell^2(\Gamma)$ defined by
\[
\mathcal{L}_{\infty}^{T}(\lambda) = \sum_{\gamma \in S} a_{\gamma}^{T}(\lambda) \otimes \rho_{\infty}(\gamma^{-1}).
\]

\begin{lem}\label{cal L estimate}
For any $\delta > 0$, there exists $T \geq \max\{1,b/2\}$ depending on $\Lambda$ and $\delta$ such that
\[
	\| \mathcal{L}^{T}_{\infty}(\lambda) \|_{L^2(F) \otimes \ell^2(\Gamma)} \leq \delta
\]
for any $0 \leq \lambda \leq \Lambda$.
\end{lem}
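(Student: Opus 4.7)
The plan is to realize $\mathcal{L}_\infty^T(\lambda)$ as the operator $\mathbb{L}_T(\lambda)(1-\tilde{\chi}_K^-)$ on $L^2(\hat{M})$, transported through the natural isometric isomorphism $L^2(\hat{M}) \cong L^2(F) \otimes \ell^2(\Gamma)$, and then invoke \cref{L estimate} together with the assumption $\Lambda < \mu \leq \lambda_0(\hat{M})$.

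First, I would set up the isomorphism explicitly: for $f \in L^2(\hat{M})$, define $f_\gamma \in L^2(F)$ by $f_\gamma(x) = f(\gamma x)$, so that $f \mapsto \sum_{\gamma \in \Gamma} f_\gamma \otimes \delta_\gamma$ is an isometry $L^2(\hat{M}) \to L^2(F) \otimes \ell^2(\Gamma)$. With the convention that the right regular representation acts by $\rho_\infty(\gamma)\delta_\eta = \delta_{\eta\gamma^{-1}}$, a direct computation gives the $\mu$-component
\[
\bigl(\mathcal{L}_\infty^T(\lambda) f\bigr)_\mu \;=\; \sum_{\gamma \in S} a_\gamma^T(\lambda) f_{\mu\gamma^{-1}}.
\]

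Second, using the $\Gamma$-equivariance \eqref{Gamma equiv} of $L_T$ and the $\Gamma$-invariance of $\tilde{\chi}_K^-$, I would compute directly
\[
\bigl(\mathbb{L}_T(\lambda)(1-\tilde{\chi}_K^-) f\bigr)(\mu x)
= \sum_{\eta \in \Gamma} \int_F L_T(\lambda,\eta^{-1}\mu x, z)\bigl(1 - \tilde{\chi}_K^-(z)\bigr) f_\eta(z)\, dz
\]
for $x \in F$, and after substituting $\gamma = \eta^{-1}\mu$ and using the definition of the kernel $k_\gamma(\lambda,x,z) = L_T(\lambda,\gamma x, z)(1 - \tilde{\chi}_K^-(z))$, this sum matches the expression above for $(\mathcal{L}_\infty^T(\lambda) f)_\mu$ exactly. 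Hence the transported operator coincides with $\mathbb{L}_T(\lambda)(1-\tilde{\chi}_K^-)$ and the two norms agree.

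Third, I apply \cref{L estimate}: since $\Lambda < \mu \leq \lambda_0(\hat{M})$, there is $C > 0$ independent of $T$ and $\lambda$ such that
\[
\| \mathbb{L}_T(\lambda) \|_{L^2(\hat{M})} \leq \frac{C}{T(\lambda_0(\hat{M}) - \Lambda)}
\]
for all $\lambda \in [0,\Lambda]$. Since $0 \leq 1 - \tilde{\chi}_K^- \leq 1$, multiplication by $1-\tilde{\chi}_K^-$ has operator norm at most one, so the same bound holds for $\mathbb{L}_T(\lambda)(1-\tilde{\chi}_K^-)$, and therefore for $\mathcal{L}_\infty^T(\lambda)$. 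Choosing $T \geq \max\{1,b/2\}$ so large that $C/(T(\lambda_0(\hat{M})-\Lambda)) \leq \delta$ completes the proof.

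The only delicate step is verifying the correspondence in the second paragraph: one has to be careful that the conventions for $\rho_\infty$ (right vs.\ left action, inverses on which factor) line up with the $\Gamma$-equivariance of $L_T$ and with the way a generic $f \in L^2(\hat{M})$ is decomposed over translates of $F$. Everything else is bookkeeping and an application of the already-established kernel estimate.
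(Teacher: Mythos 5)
Your proof is correct and follows essentially the same route as the paper: both realize $\mathcal{L}_\infty^T(\lambda)$ as $\mathbb{L}_T(\lambda)(1-\tilde{\chi}_K^-)$ under the isometry $L^2(\hat{M})\cong L^2(F)\otimes\ell^2(\Gamma)$ and then invoke Proposition~\ref{L estimate}. The only cosmetic difference is that the paper checks the intertwining on elementary tensors $f\otimes\delta_g$ while you verify the component formula for a general $f\in L^2(\hat M)$; the conventions (right regular representation, $\rho_\infty(\gamma^{-1})\delta_\eta=\delta_{\eta\gamma}$) and the use of the equivariance~\eqref{Gamma equiv} line up exactly.
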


\begin{proof}
It should be noticed that the map $U \colon L^2(F) \otimes \ell^2(\Gamma) \to L^2(\hat{M})$ defined by $U(f \otimes \delta_g) = (f \circ g^{-1}) \chi_{gF}$ is an isometric isomorphism. For any $f \in L^2(F)$ and $g \in \Gamma$, we compute
\begin{eqnarray}
U (\mathcal{L}_{\infty}^{T}(\lambda) (f \otimes \delta_g))(x) &=& \sum_{\gamma \in S} U (a_{\gamma}^{T}(\lambda)(f) \otimes \delta_{g\gamma})(x) \nonumber \\
&=& \sum_{\gamma \in S} \chi_{g \gamma F}(x) a_{\gamma}^{T}(\lambda)(f)(\gamma^{-1} g^{-1} x) \nonumber\\
&=& \sum_{\gamma \in S} \chi_{g \gamma F}(x) \int_{F} L_{T}(\lambda,g^{-1} x,y) (1 - \tilde{\chi}_K^-(y)) f(y) dy, \nonumber
\end{eqnarray}
and
\begin{eqnarray}
	(\mathbb{L}_{T}(\lambda)(1- \tilde{\chi}_K^-) U(f \otimes \delta_g))(x) &=& (\mathbb{L}_{T}(\lambda)(1- \tilde{\chi}_K^-) (f \circ g^{-1}) \chi_{gF})(x) \nonumber\\
	&=& \int_{gF} L_{T}(\lambda,x,y) (1 -\tilde{\chi}_K^-(y))f(g^{-1}y)dy \nonumber \\
	&=& \int_{F} L_{T}(\lambda,g^{-1}x,y) (1 -\tilde{\chi}_K^-(y))f(y)dy\nonumber
\end{eqnarray}
for almost any $x \in \hat{M}$, where we used (\ref{Gamma equiv}). From the definition of $S$, it is evident that both expressions vanish for almost any $x$ that is not in the union of $g \gamma F$ with $\gamma \in S$. Almost any $x$ in this union belongs to exactly one translate of $F$, and hence,
\[
	U (\mathcal{L}_{\infty}^{T}(\lambda) (f \otimes \delta_g))(x) = (\mathbb{L}_{T}(\lambda)(1- \tilde{\chi}_K^-) U(f \otimes \delta_g))(x)
	\]
Therefore, under the identification $L^2(F) \otimes \ell^2(\Gamma) \cong L^2(\hat{M})$ via $U$, the operator $\mathcal{L}_{\infty}^{T}(\lambda)$ corresponds to the operator $\mathbb{L}_{T}(\lambda)(1 - \tilde{\chi}_K^-)$ and the proof is completed by Proposition \ref{L estimate}.
\end{proof}

\begin{prop}\label{bound for fixed}
For any $0<\Lambda <\mu$, $\delta > 0$ and $0<\varepsilon<1$, there exists $T \geq \max\{1,b/2\}$ such that for any $\lambda \in [0,\Lambda]$ there exists a finitely supported map $a \colon\Gamma\to\Mat_{r\times r}(\C)$ (for some--large--$r\in\N$), such that 
\[
\| \mathcal{L}_\vf^T(\lambda) \| \leq \ve + 3 \delta
\]
for any $n \in \mathbb{N}$ and any $\vf\in\Hom(\Gamma,S_n)$ that satisfies \eqref{almost} for $a$ and the given $\ve$.
\end{prop}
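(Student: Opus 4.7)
The plan is to bootstrap from the uniform bound $\|\mathcal{L}_\infty^T(\lambda)\|\le\delta$ established in \cref{cal L estimate}, transferring it to $\mathcal{L}_\vf^T(\lambda)$ via \eqref{almost}. The obstruction is that \eqref{almost} requires matrix-valued coefficients $a(\gamma)\in\Mat_{r\times r}(\C)$, whereas our natural coefficients $a_\gamma^T(\lambda)$ act on the infinite-dimensional space $L^2(F)$. I would bridge this gap by a finite-rank approximation, exploiting the fact, noted just before \cref{a Lip}, that each $a_\gamma^T(\lambda)$ is Hilbert-Schmidt, hence compact, on $L^2(F)$.

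First, using \cref{cal L estimate} I would fix $T\ge\max\{1,b/2\}$, depending only on $\Lambda$ and $\delta$, so that $\|\mathcal{L}_\infty^T(\lambda)\|\le\delta$ for every $\lambda\in[0,\Lambda]$. This choice of $T$ determines the finite support set $S\subset\Gamma$ introduced in \cref{secauto}. Now, for each fixed $\lambda\in[0,\Lambda]$, since $S$ is finite and each $a_\gamma^T(\lambda)$ is compact, I could choose an orthogonal projection $P$ of some finite rank $r$ (depending on $\lambda$) on $L^2(F)$ such that
\[
\|a_\gamma^T(\lambda) - P a_\gamma^T(\lambda) P\|_{L^2(F)} < \delta/|S|\quad\text{for every }\gamma\in S.
\]
Setting $\tilde a_\gamma^T(\lambda) = P a_\gamma^T(\lambda) P$ and
\[
\tilde{\mathcal{L}}_\vf^T(\lambda) = \sum_{\gamma\in S}\tilde a_\gamma^T(\lambda)\otimes\rho_\vf^0(\gamma^{-1}),\quad \tilde{\mathcal{L}}_\infty^T(\lambda) = \sum_{\gamma\in S}\tilde a_\gamma^T(\lambda)\otimes\rho_\infty(\gamma^{-1}),
\]
the unitarity of $\rho_\vf^0$ and $\rho_\infty$ combined with the triangle inequality yields $\|\mathcal{L}_\vf^T(\lambda) - \tilde{\mathcal{L}}_\vf^T(\lambda)\|\le\delta$ and $\|\mathcal{L}_\infty^T(\lambda) - \tilde{\mathcal{L}}_\infty^T(\lambda)\|\le\delta$, so in particular $\|\tilde{\mathcal{L}}_\infty^T(\lambda)\|\le 2\delta$.

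The final step is to feed \eqref{almost} into this picture. I would identify $P L^2(F)$ isometrically with $\C^r$, so each $\tilde a_\gamma^T(\lambda)$ becomes an element of $\Mat_{r\times r}(\C)$, and then define $a\colon\Gamma\to\Mat_{r\times r}(\C)$ by $a(\gamma):=\tilde a_{\gamma^{-1}}^T(\lambda)$ when $\gamma^{-1}\in S$ and $a(\gamma)=0$ otherwise; this is finitely supported. Under the substitution $\gamma\mapsto\gamma^{-1}$, the norm of $\tilde{\mathcal{L}}_\vf^T(\lambda)$ on $L^2(F)\otimes V_n^0$ equals the norm of $\sum_\gamma a(\gamma)\otimes(\rho_n^0\circ\vf)(\gamma)$ on $\C^r\otimes V_n^0$, and analogously on the regular side for $\rho_\infty$. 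If $\vf$ satisfies \eqref{almost} for this $a$ and the given $\ve$, this yields $\|\tilde{\mathcal{L}}_\vf^T(\lambda)\|\le\|\tilde{\mathcal{L}}_\infty^T(\lambda)\|+\ve\le 2\delta+\ve$, and one further triangle inequality delivers the desired $\|\mathcal{L}_\vf^T(\lambda)\|\le 3\delta+\ve$.

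The main subtlety I anticipate is the last identification: one must verify that compressing by $P$ on the $L^2(F)$ factor and then tensoring with the unitary representation $\rho_\vf^0$ (respectively $\rho_\infty$) does not distort the operator norm, so that the matrix model to which \eqref{almost} applies captures exactly the compressed tensor-product operator. This reduces to the observation that $\tilde{\mathcal{L}}_\vf^T(\lambda) = (P\otimes I)\mathcal{L}_\vf^T(\lambda)(P\otimes I)$, so both its image and its nontrivial action live on $P L^2(F)\otimes V_n^0\cong\C^r\otimes V_n^0$. Once this bookkeeping is in place, the rest of the argument is the three-step triangle inequality above, and everything else is forced by the already-established compactness and uniform bounds.
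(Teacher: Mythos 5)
Your argument is correct and follows essentially the same route as the paper's: fix $T$ via \cref{cal L estimate}, use the Hilbert–Schmidt (hence compact) nature of $a_\gamma^T(\lambda)$ to approximate by finite-rank operators with an error of $\delta/|S|$ per $\gamma\in S$, then invoke \eqref{almost} and close with a three-step triangle inequality. Your version is slightly more explicit in two respects that the paper glosses over: you realize the finite-rank approximation concretely as a compression $P a_\gamma^T(\lambda) P$ (the paper just posits operators $b_\gamma(\lambda)$ on a finite-dimensional $V\subset L^2(F)$, which amounts to the same thing), and you correctly note that to match the form of \eqref{almost} — which sums $a(\gamma)\otimes\rho(\gamma)$ rather than $a(\gamma)\otimes\rho(\gamma^{-1})$ — one should define $a(\gamma)=\tilde a_{\gamma^{-1}}^T(\lambda)$ on $S^{-1}$, whereas the paper silently relabels. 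Both are harmless, but your bookkeeping is the cleaner of the two.
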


\begin{proof}
	Fix $0<\Lambda <\mu$, $\delta > 0$. We know from Lemma \ref{cal L estimate} that there exists $T \geq \max\{1,b/2\}$ such that
	\[
	\| \mathcal{L}^{T}_{\infty}(\lambda) \|_{L^2(F) \otimes \ell^2(\Gamma)} \leq \delta
	\]
	for any fixed $\lambda \in[0, \Lambda]$. Since $a_\gamma^T(\lambda) \colon L^2(F) \to L^2(F)$ is Hilbert-Schmidt, and thus, compact for any $\gamma \in \Gamma$, it can be approximated by operators with finite-dimensional range. Keeping in mind that $S$ is finite, we derive that there exists a finite-dimensional subspace $V \subset L^2(F)$ and operators $b_\gamma(\lambda) \colon V \to V$ such that
	\[
	\| a_\gamma^{T}(\lambda) - b_\gamma(\lambda) \|_{L^2(F)} \leq \frac{\delta}{|S|}
	\]
	for any $\gamma \in S$. Identifying $V$ with $\mathbb{R}^r \subset \mathbb{C}^r$ for some $r \in \mathbb{N}$ and regarding $b_\gamma(\lambda) \in \Mat_{r \times r}(\mathbb{C})$ for any $\gamma \in S$, consider the map $a \colon \Gamma \to \Mat_{r \times r}(\mathbb{C})$ supported in $S$ with $a(\gamma) = b_\gamma(\lambda)$ for any $\gamma \in S$. Consider now $\vf \in \Hom(\Gamma,S_n)$ that satisfies \eqref{almost} for this map $a$ and the given $\ve$, that is,
	\[
	\| \sum_{\gamma \in S} b_\gamma(\lambda) \otimes \rho_\vf^0(\gamma^{-1}) \|_{V \otimes V_n^0} \leq \| \sum_{\gamma \in S} b_\gamma(\lambda) \otimes \rho_\infty(\gamma^{-1}) \|_{V \otimes \ell^2(\Gamma)} + \ve.
	\]
	Combining the above, we conclude that
	\begin{eqnarray}
		\| \mathcal{L}_{\vf}^{T}(\lambda) \|_{L^2(F) \otimes V_n^0} &\leq& \|  \sum_{\gamma \in S} b_\gamma(\lambda) \otimes \rho_\vf^0(\gamma^{-1}) \|_{L^2(F) \otimes V_n^0} + \delta \nonumber\\
		&\leq&	\|\sum_{\gamma \in S} b_\gamma(\lambda) \otimes \rho_\infty(\gamma^{-1}) \|_{L^2(F) \otimes \ell^2(\Gamma)} + \varepsilon + \delta \nonumber\\
		&\leq &  \| \mathcal{L}_{\infty}^{T}(\lambda) \|_{L^2(F) \otimes \ell^2(\Gamma)} + \ve + 2 \delta \leq \ve + 3 \delta, \nonumber
	\end{eqnarray}
as we wished.
\end{proof}

\begin{cor}\label{bound for all}
	For any $0<\Lambda <\mu$, $\delta > 0$ and $0<\varepsilon<1$, there are $T \geq \max\{1,b/2\}$, integers $r_i\ge1$ and finitely supported maps $a_i\colon\Gamma\to\Mat_{r_i\times r_i}(\C)$, $1\le i\le k$ (for some--large--$k\in\N$), such that 
	\[
	\| \mathcal{L}_\vf^T(\lambda) \| \leq \ve + 4 \delta
	\]
	for any $\lambda \in [0,\Lambda]$, $n \in \mathbb{N}$ and any $\vf\in\Hom(\Gamma,S_n)$ that satisfies \eqref{almost} with input $r=r_i$, $a=a_i$, and the given $\ve$, for each $1\le i\le k$.
\end{cor}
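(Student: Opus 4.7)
The plan is to upgrade \cref{bound for fixed} from a single $\lambda$ to uniform control over the whole interval $[0,\Lambda]$ by combining it with the Lipschitz continuity statement of \cref{a Lip} and a standard compactness argument; the extra $\delta$ in passing from the bound $\ve + 3\delta$ to $\ve + 4\delta$ is precisely what absorbs the discretization error in the $\lambda$-variable.

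First I would fix $T \geq \max\{1, b/2\}$ as supplied by \cref{cal L estimate} with the given $\Lambda$ and $\delta$; inspection of the proof of \cref{bound for fixed} shows that the same $T$ works there for every $\lambda \in [0,\Lambda]$, since the choice depends only on $\Lambda$ and $\delta$. With $T$ fixed, \cref{a Lip} furnishes a constant $c = c(\Lambda, T, \chi_K^-) > 0$ with
\[
\| a_\gamma^T(\lambda_2) - a_\gamma^T(\lambda_1) \|_{L^2(F)} \leq c(\lambda_2 - \lambda_1)
\]
for all $\gamma \in S$ and $0 \leq \lambda_1 \leq \lambda_2 \leq \Lambda$. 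Because the permutation representation $\rho_\vf^0$ is unitary, one has $\|\rho_\vf^0(\gamma^{-1})\| \leq 1$, and because the finite set $S \subset \Gamma$ depends only on $T$ and $\chi_K^-$ (and not on $\vf$ or $n$), summing the above estimate over $\gamma \in S$ yields the uniform Lipschitz bound
\[
\| \mathcal{L}_\vf^T(\lambda_2) - \mathcal{L}_\vf^T(\lambda_1) \| \leq L\,(\lambda_2 - \lambda_1), \qquad L := |S|\, c,
\]
valid for every $n \in \mathbb{N}$ and every $\vf \in \Hom(\Gamma, S_n)$.

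Next I would pick a $(\delta/L)$-net $\{\lambda_1, \dots, \lambda_k\} \subset [0,\Lambda]$, e.g.\ equally spaced points with $k = \lceil \Lambda L/\delta \rceil + 1$, and invoke \cref{bound for fixed} at each $\lambda_i$ (with the same $T$, $\delta$, $\ve$) to produce integers $r_i \geq 1$ and finitely supported maps $a_i \colon \Gamma \to \Mat_{r_i \times r_i}(\C)$ such that every $\vf \in \Hom(\Gamma,S_n)$ satisfying \eqref{almost} with input $(r_i, a_i, \ve)$ obeys $\|\mathcal{L}_\vf^T(\lambda_i)\| \leq \ve + 3\delta$. If $\vf$ satisfies \eqref{almost} simultaneously for every $1 \leq i \leq k$, then for an arbitrary $\lambda \in [0,\Lambda]$ I select $\lambda_i$ with $|\lambda - \lambda_i| \leq \delta/L$ and estimate
\[
\|\mathcal{L}_\vf^T(\lambda)\| \leq \|\mathcal{L}_\vf^T(\lambda_i)\| + L|\lambda - \lambda_i| \leq (\ve + 3\delta) + \delta = \ve + 4\delta,
\]
which is the claim.

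The only substantive point—and the main thing to check—is that the Lipschitz constant $L$ is genuinely uniform in $\vf$ and $n$. This is built into the construction: the operators $a_\gamma^T(\lambda)$ depend only on the Green function of $\hat M$ and the geometric cutoffs, so their Lipschitz estimate is insensitive to the covering, while $\rho_\vf^0$ is unitary for every $\vf$ and $n$. Granted this uniformity, the covering argument goes through without further incident.
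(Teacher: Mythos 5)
Your proposal is correct and follows essentially the same route as the paper: fix $T$ once (it depends only on $\Lambda$ and $\delta$), use Lemma~\ref{a Lip} together with unitarity of $\rho_\vf^0$ and the $\vf$-independence of $S$ to get a Lipschitz bound on $\lambda\mapsto\mathcal{L}_\vf^T(\lambda)$ uniform in $n$ and $\vf$, cover $[0,\Lambda]$ by a finite $\delta/(c|S|)$-net, apply Proposition~\ref{bound for fixed} at each net point, and absorb the discretization error into the extra $\delta$.
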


\begin{proof}
	Choose $T \geq \max\{1,b/2\}$ according to Proposition \ref{bound for fixed}. We know from Lemma \ref{a Lip} that there exists $c > 0$ depending on $\Lambda$, $T$ and $\chi_K^-$ such that
	\[
	\| a_\gamma^T(\lambda_1) - a_\gamma^T(\lambda_2) \|_{L^2(F)} \leq c |\lambda_1 - \lambda_2|
	\]
	for any $\lambda_1,\lambda_2 \in [0,\Lambda]$, which yields that
	\begin{equation}\label{cal L Lip}
		\| \mathcal{L}_{\vf}^{T}(\lambda_1) -  \mathcal{L}_{\vf}^{T}(\lambda_2)  \|_{L^2(F) \otimes V_n^0} \leq c |S| |\lambda_1 - \lambda_2|
	\end{equation}
	for any $\lambda_1,\lambda_2 \in [0,\Lambda]$, $n \in \mathbb{N}$ and $\vf \in \Hom(\Gamma,S_n)$. Fix a finite subset $X$ of $[0,\Lambda]$ such that $d(\lambda,X) < \delta/(c|S|)$ any $\lambda \in [0,\Lambda]$. Since $X$ is finite, we obtain from Proposition \ref{bound for fixed} that there exist finitely many, finitely supported maps $a_i\colon\Gamma\to\Mat_{r_i\times r_i}(\C)$, such that
	\[
	\| \mathcal{L}_{\vf}^{T}(\lambda) \|_{L^2(F) \otimes V_n^0} \leq \ve + 3 \delta
	\]
	for any $\lambda \in X$, $n \in \mathbb{N}$ and $\varphi \in \Hom(\Gamma,S_n)$ satisfying \eqref{almost} for any of the $a_i$ and the given $\ve$. The assertion follows from the choice of $X$ together with (\ref{cal L Lip}).
\end{proof}

\begin{thm}\label{main}
	Assume that the Ricci curvature of $M$ is bounded from below
	and that $\mu = \inf\{\lambda_0(\hat M),\lambda_{\ess}(M)\} > 0$.
	Fix $0<\Lambda<\mu$ and $0<\ve<1$. Then there are integers $r_i\ge1$ and finitely supported maps $a_i\colon\Gamma\to\Mat_{r_i\times r_i}(\C)$, $1\le i\le k$ (for some--large--$k\in\N$), such that $M_\vf$ is $[0,\Lambda]$-stable if $\vf\in\Hom(\Gamma,S_n)$ satisfies \eqref{almost} with input $r=r_i$, $a=a_i$, and the given $\ve$, for each $1\le i\le k$.
\end{thm}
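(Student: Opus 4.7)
The plan is to assemble the end parametrix from \cref{secparex} and the interior parametrix from \cref{secauto} via the identity
\[
(\Delta_{M_\vf} - \lambda)\,\mathbb{M}_\vf(\lambda) = 1 + \mathbb{L}_\vf^{\textrm{int}}(\lambda) + \mathbb{L}_\vf^{K}(\lambda)
\]
established in \eqref{patched relation}, and then to invert $1 + \mathbb{L}_\vf^{\textrm{int}}(\lambda) + \mathbb{L}_\vf^{K}(\lambda)$ on $L^2_{\textrm{new}}(M_\vf)$ by a Neumann series whenever $\vf$ satisfies enough almost-homomorphism inequalities \eqref{almost} to make this remainder a strict contraction. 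Composing the parametrix with the inverse will yield a bounded right inverse of $\Delta_{M_\vf} - \lambda$ on $L^2_{\textrm{new}}(M_\vf)$ for every $\lambda \in [0,\Lambda]$; self-adjointness of $\Delta_{M_\vf}$ together with the Laplacian invariance of the orthogonal splitting $L^2(M_\vf) = L^2_{\textrm{old}}(M_\vf) \oplus L^2_{\textrm{new}}(M_\vf)$ then forces $[0,\Lambda] \cap \sigma\bigl(\Delta_{M_\vf}|_{L^2_{\textrm{new}}}\bigr) = \emptyset$, which is exactly $[0,\Lambda]$-stability of $p_\vf$.

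To execute the plan, first fix an auxiliary tolerance $\delta > 0$ satisfying $\varepsilon + 5\delta < 1$ (for instance $\delta = (1-\varepsilon)/6$), which is possible because $\varepsilon < 1$. Apply \cref{estl2} with this $\delta$ to select the compact set $K$ and cutoffs $\chi_K^\pm$ so that $\|\mathbb{L}_\vf^{K}(\lambda)\|_{L^2} \leq \delta$ holds uniformly in $n$, $\vf \in \Hom(\Gamma,S_n)$ and $\lambda \in [0,\Lambda]$. Then apply \cref{bound for all} with the same $\delta$ and the given $\varepsilon$ to produce the scale $T \geq \max\{1,b/2\}$ and the finitely many finitely supported matrix-valued maps $a_i\colon\Gamma\to\Mat_{r_i\times r_i}(\C)$, $1 \leq i \leq k$, such that every $\vf \in \Hom(\Gamma,S_n)$ satisfying \eqref{almost} for each input $(r_i,a_i,\varepsilon)$ obeys $\|\mathcal{L}_\vf^T(\lambda)\|_{L^2(F)\otimes V_n^0} \leq \varepsilon + 4\delta$ for every $\lambda \in [0,\Lambda]$. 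Under the isometric identification $L^2_\vf(\hat M,V_n^0) \cong L^2(F)\otimes V_n^0$ set up in \cref{secauto}, this is exactly the bound $\|\mathbb{L}_\vf^{\textrm{int}}(\lambda)\|_{L^2_{\textrm{new}}} \leq \varepsilon + 4\delta$.

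Adding the two bounds gives $\|\mathbb{L}_\vf^{\textrm{int}}(\lambda) + \mathbb{L}_\vf^{K}(\lambda)\| \leq \varepsilon + 5\delta < 1$ on $L^2_{\textrm{new}}(M_\vf)$, so $1 + \mathbb{L}_\vf^{\textrm{int}}(\lambda) + \mathbb{L}_\vf^{K}(\lambda)$ is invertible by the geometric series and the composition
\[
\mathbb{M}_\vf(\lambda)\bigl(1 + \mathbb{L}_\vf^{\textrm{int}}(\lambda) + \mathbb{L}_\vf^{K}(\lambda)\bigr)^{-1} \colon L^2_{\textrm{new}}(M_\vf) \to \mathcal{D}(\Delta_{M_\vf}) \cap L^2_{\textrm{new}}(M_\vf)
\]
is a bounded right inverse of $\Delta_{M_\vf} - \lambda$ on the new part. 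Since $\Delta_{M_\vf}$ is self-adjoint and real $\lambda$ is considered, the existence of a right inverse forces injectivity as well (any vector in the kernel would be orthogonal to the whole range), hence $\lambda$ is in the resolvent set of $\Delta_{M_\vf}|_{L^2_{\textrm{new}}}$, as required.

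The main obstacle, which is the uniform control of the interior remainder jointly in the spectral parameter $\lambda \in [0,\Lambda]$ and in the representation $\vf$, has already been absorbed into \cref{bound for all}: it is handled there by combining the Lipschitz estimate \cref{a Lip} on a finite $\delta/(c|S|)$-net in $[0,\Lambda]$ with a finite-dimensional approximation of the Hilbert--Schmidt kernels $a_\gamma^T(\lambda)$, which is what brings in the almost-homomorphism inequality \eqref{almost}. Consequently the proof of \cref{main} itself is essentially an organizational step: pick $\delta$ as above, invoke the two uniform remainder estimates, and run the Neumann series.
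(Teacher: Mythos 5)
Your proposal is correct and follows essentially the same route as the paper: apply \cref{estl2} and \cref{bound for all} with a tolerance $\delta$ chosen so that $\varepsilon + 5\delta < 1$ (the paper takes $\delta=(1-\varepsilon)/10$, you take $(1-\varepsilon)/6$, an immaterial difference), sum the two remainder bounds, invert $1+\mathbb{L}_\vf^{\textrm{int}}(\lambda)+\mathbb{L}_\vf^{K}(\lambda)$ by Neumann series, and conclude from surjectivity plus self-adjointness of $\Delta_{M_\vf}|_{L^2_{\textrm{new}}}$ that no new spectrum appears in $[0,\Lambda]$.
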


\begin{proof}
If $M$ is non-compact, let $K$ and $\chi_{K}^{\pm}$ according to Proposition \ref{estl2} corresponding to $\Lambda$ and $\delta = (1-\ve)/10$. In the case where $M$ is compact, set $\chi^-_K = 0$. Let $-(m-1)b^2$ be a lower bound for the Ricci curvature of $M$, where $m$ is the dimension of $M$, and consider $T \geq \max\{1,b/2\}$ and finitely many maps $a_i$ according to Corollary \ref{bound for all}. Suppose that $\vf \in \Hom(\Gamma,S_n)$ satisfies \eqref{almost} for any $a_i$ and the given $\ve$. We know from (\ref{patched relation}) that
\[
(\Delta_{M_\vf} - \lambda) \mathbb{M}_\vf(\lambda) = 1 + \mathbb{L}_\vf^{\text{int}}(\lambda) + \mathbb{L}_\vf^{K}(\lambda)
\]
in $L^2_{\text{new}}(M_\vf)$ and $\| \mathbb{L}_\vf^{K}(\lambda)\|_{L^2} \leq \delta$ for any $\lambda \in [0,\Lambda]$. Bearing in mind that under the identification $L^2_{\text{new}}(M_\vf) \cong L^2_\rho(\hat{M},V_n^0) \cong L^2(F) \otimes V_n^0$, the operator $\mathbb{L}_\vf^{\text{int}}(\lambda)$ corresponds to $\mathcal{L}_{\vf}^{T}(\lambda)$, we derive from Corollary \ref{bound for all} that $\| \mathbb{L}_\vf^{\text{int}}(\lambda) \| \leq \ve + 4 \delta$ for any $\lambda \in [0,\Lambda]$. Therefore, the operator $\mathbb{T}_\vf(\lambda) = \mathbb{L}_\vf^{\text{int}}(\lambda) + \mathbb{L}_\vf^{K}(\lambda)$ has norm $\|\mathbb{T}_\vf(\lambda)\| \leq \ve + 5 \delta<1$, and thus, $1 + \mathbb{T}_\vf(\lambda)$ is invertible with inverse $\sum_{n=0}^\infty (-\mathbb{T}_\vf(\lambda))^n$ for any $\lambda \in [0,\Lambda]$. We deduce that
\[
(\Delta_{M_\vf} - \lambda) \mathbb{M}_\vf(\lambda) (1 +\mathbb{T}_\vf(\lambda))^{-1} = 1
\]
in $L^2_{\text{new}}(M_\vf)$, which implies that the operator
\[
\Delta_{M_\vf} - \lambda \colon \mathcal{D}(\Delta_{M_\vf}) \cap L^2_{\text{new}}(M_\vf) \to L^2_{\text{new}}(M_\vf)
\]
is surjective, and hence, injective for any $\lambda \in [0,\Lambda]$, this operator being self-adjoint. We conclude that $\Delta_{M_\vf}$ does not have new eigenvalues in $[0,\Lambda]$.
\end{proof}

Theorem \ref{main} readily yields the more general versions of Theorems \ref{mainth}, \ref{mainth2} and Corollary \ref{mainco} for intermediate coverings.


\end{document}